\newtheorem{thm}{Theorem}[section]
\newtheorem*{thm*}{Theorem}
\newtheorem*{prop*}{Proposition}
\newtheorem*{corol*}{Corollary}
\newtheorem*{lemma*}{Lemma}
\newtheorem{prop}[thm]{Proposition}
\newtheorem{corol}[thm]{Corollary}
\newtheorem{lemma}[thm]{Lemma}
\newtheorem{claim}{Claim}[thm]
\newtheorem{sublemma}[claim]{Lemma}
\newtheorem*{claim*}{Claim}
\newtheorem*{question*}{Question}
\theoremstyle{definition}
\newtheorem{defi}[thm]{Definition}
\newtheorem*{defi*}{Definition}
\theoremstyle{remark}
\newtheorem{remark}[thm]{Remark}
\newtheorem*{remark*}{Remark}
\newtheorem*{remarks*}{Remarks}
\newtheorem*{notation*}{Notation}
\newcounter{main}
\theoremstyle{plain}
\newtheorem{maintheorem}[main]{Theorem}  
\newtheorem{maincor}[main]{Corollary}  
\newtheorem{mainprop}[main]{Proposition}
\title{Pesin theory for transcendental maps and applications}
\author[1]{Anna Jov\'e\thanks{This work is supported by the Spanish government grants FPI PRE2021-097372 and PID2023-147252NB-I00. Corresponding author. \url{annajove@ub.edu}.}}
\affil[1]{\small Departament de Matemàtiques i Informàtica, Universitat de Barcelona, Barcelona, Spain}
\begin{document}
\maketitle
\begin{abstract}
	In this paper, we develop Pesin theory for the boundary map of some Fatou components of  transcendental functions, under certain hyptotheses on the singular values and the Lyapunov exponent. That is, we prove that generic inverse branches for such maps are well-defined and conformal. In particular,  we study in depth the 
	Lyapunov exponents with respect to harmonic measure, providing results which are of independent interest. 
	As an application of our results, we describe in detail generic inverse branches for centered inner functions, and we prove density of periodic boundary points for a large class of Fatou components. 
\end{abstract}

\section{Introduction}

In the setting of smooth dynamical systems, {\em hyperbolic dynamical systems} play a distinguished role, since they are the easiest to study and exhibit the simplest possible behaviour. Indeed, hyperbolic dynamics are characterized by the presence of expanding and contracting directions for the derivative at every point, which provides strong local, semilocal or even global information about the dynamics. However, the assumption of hyperbolicity is quite restrictive.
A weaker (and hence, more general) form of hyperbolicity, known as {\em non-uniform hyperbolicity}, was initially developed by Yakov Pesin in his seminal work \cite{Pesin76, Pesin77}. Since then,  Pesin's approach to hyperbolicity, also known as {\em Pesin theory}, has been extended, generalized and  refined in numerous articles and  research books (see e.g. \cite{Pollicott}, \cite[Supplement]{KatokHasselblat}, \cite{BarreiraPesin}). Although results apply to both discrete and continuous dynamical systems, in this paper we focus on the discrete ones.

Roughly speaking, Pesin studied originally  $ \mathcal{C}^1 $-diffeomorphisms on compact smooth Riemannian manifolds. Under the assumption that such a map is measure-preserving and ergodic, and no Lyapunov exponent vanishes except on a set of zero measure,  the forward and backwards contraction or expansion around almost every point is controlled asymptotically by the Lyapunov exponents. Applications of this theory include periodic points, homoclinic points, and stable manifold theory \cite[Part II]{Pollicott}.

One of the natural generalizations of Pesin theory is to the setting of iteration of rational maps in the Riemann sphere $ \widehat{\mathbb{C}} $. That is, let $ f\colon\widehat{\mathbb{C}}\to\widehat{\mathbb{C} } $ be holomorphic, and consider the discrete dynamical system generated by $ f $. The phase space $ \widehat{\mathbb{C}} $ is commonly split into two totally invariant sets: the Fatou set $ \mathcal{F}(f) $, where the family of iterates is normal, and hence the dynamics are in some sense stable; and its complement, the Julia set $ \mathcal{J}(f) $. Although the Fatou set is well-understood, the dynamics in the Julia set are more intricate  and worthy of study. For general background in rational iteration we refer to \cite{CarlesonGamelin,milnor}.
In contrast with the setting of $ \mathcal{C}^1 $-diffeomorphisms considered by Pesin, now the iterated function is no longer bijective, which is overcome by assuming a higher degree of regularity on the function. 

A rational map is said to be {\em hyperbolic}  if all orbits of critical values (i.e. images of zeros of $ f' $) are compactly contained in the Fatou set, which already implies that \textit{all} inverse branches around points in $ \mathcal{J}(f) $ are well-defined and uniformly contracting (see e.g. \cite[Sect. V.2]{CarlesonGamelin}, \cite[Sect. 19]{milnor}). Hence, following Pesin's approach for diffeomorphisms, it is natural to ask whether, for a general map (not necessarily hyperbolic), generic inverse branches are well-defined and contracting.
Note that one should make precise the notion of generic inverse branches, by defining  the abstract space of backward orbits for points in $ \mathcal{J}(f) $ and endow it with a measure (using Rokhlin's natural extension, see Sect. \ref{subsect-Rokhlin's-natural-ext}). 

One can prove that, under the assumption of existence of an ergodic invariant probability with positive Lyapunov exponent, for almost every backward orbit $ \left\lbrace x_n\right\rbrace _n $ there exists a disk around the initial point $ x_0 $, such that the corresponding inverse branches of $ f^n $ are well-defined and contracting in this disk (see \cite{Ledrappier,DobbsP}, and also \cite[Sect. 11.2]{PrzytyckiUrbanski}, \cite[Chap. 9.3]{KotusUrbanski}, \cite[Sect. 28.3]{UrbanskiRoyMunday3}, among others). The proof relies strongly on the fact that $ \widehat{\mathbb{C} } $ is compact (and hence, $ \mathcal{J}(f) $ is also compact), and the finiteness of the set of critical values.

We note that the existence of ergodic invariant probabilities supported on the Julia set of rational maps has been historically a topic of wide interest, in connection with the measure of maximal entropy. For polynomials, the existence of such a measure  was already proved by  Brolin \cite{Brolin65},  whereas for rational maps it was done by Freire, Lopes and Mañé \cite{Freire-Lopes-Mane}, and  Lyubich  \cite{Ljubich_1983}, independently. Such a measure of maximal entropy is known to be an ergodic invariant probability, and hence it can be used as an initial cornerstone to develop Pesin theory. Moreover, Lyapunov exponents with respect to any  ergodic invariant probability supported on $ \mathcal{J}(f) $  have been studied in depth \cite{Przytycki-Hausdorff_dimension,Przytycki-Lyapunovexponents, DobbsP}.

The goal of this paper is to extend these well-known results for rational maps to the transcendental setting, that is, for maps $ f\colon\mathbb{C}\to\widehat{\mathbb{C}} $ (transcendental) meromorphic, including the entire case. Although under the presence of poles some orbits get truncated, one can define the Fatou and Julia set for $ f $ in a similar way as for rational maps (for precise definitions and properties, see Sect. \ref{subsect-components-Fatou-K} and references therein). As in the rational case, the question we want to address is whether generic inverse branches are well-defined and contracting around points in $ \mathcal{J}(f) $.

Note that the cornerstones from which the rational Pesin theory is built (namely, compact phase space, finitely many critical values, and existence of ergodic invariant probabilities) no longer hold in general. Indeed, first, the phase space is now $ \mathbb{C} $, which is no longer compact, and nor is the Julia set. In fact, this lack of compactness causes difficulties even for the extension of the notion of hyperbolicity from the rational setting  \cite{Rempe-Sixsmith}.

Additionally, critical values are not the only values where inverse branches fail to be defined. Indeed, one must consider the set of {\em singular values} (i.e. critical and asymptotic values, and accumulation thereof, denoted by $ SV$),  and it may be uncountable.

Finally, the existence of invariant measures on the Julia set is much more delicate and remains somewhat unexplored, as well as  Lyapunov exponents (which depend on the existence of the previous measures).
{Indeed, although the existence of invariant ergodic probabilities supported on the Julia set has been proved for certain families (such as the hyperbolic exponential family \cite{UrbanskiZdunik}), in other cases it is known that they do not exist \cite{Dobbs}. Hence, in contrast with  rational maps, the existence of an ergodic invariant measure supported in the Julia set is unknown in the general setting. }

In order to overcome some of  these difficulties, we restrict ourselves to some forward invariant subsets of the Julia set which are of special interest: the boundaries of invariant (or periodic) connected components of the Fatou set (known as {\em Fatou components}). If we let $ U $ be an invariant Fatou component for $ f $, then its boundary $ \partial U$ is forward invariant under $ f $. In the seminal work of Doering and Mañé \cite{DoeringMane1991}, invariant ergodic measures for $ f\colon \partial U \to \partial U $ supported on $ \partial U $ are given, following the  approach initiated by Przytycki to study rational maps restricted to the boundary  of attracting basins  \cite{Przytycki-Hausdorff_dimension}.

Taking advantage of these invariant measures, under some mild assumptions on the singular values, we are able to overcome the difficulties arising from the lack of compactness, the infinite degree and the presence of infinitely many singular values. Our techniques include refined estimates on harmonic measure and the construction of an appropriate conformal metric.  In this manner, we can develop Pesin theory in the boundary of some transcendental Fatou components  in a quite successful way, which is presented next.
\subsection*{Statement of results}

Let $ f\colon \mathbb{C}\to\widehat{\mathbb{C}} $ be a transcendental meromorphic function, i.e. so that $ \infty $ is an essential singularity for $ f $, and let $ U $ be an invariant Fatou component for $ f $. It is well-known that such an invariant Fatou component is either an attracting basin, a parabolic basin, a rotation domain or a Baker domain (see Sect. \ref{subsect-components-Fatou-K}). In the sequel, we denote by $ \partial U $ the boundary of $ U $ in $ \mathbb{C} $, and $ \widehat{\partial} U $ the boundary  in $ \widehat{\mathbb{C}} $. All the derivatives and absolute values are understood to be with respect to the spherical metric in $ \widehat{\mathbb{C}} $, and hence $ \left| f'\right|  $ is bounded on compact subsets of the plane.

Attracting basins are the natural candidates to perform Pesin theory on their boundary, since the harmonic measure $ \omega_U $ (with basepoint the fixed point $ p\in U $) is invariant under $ f $ and ergodic. The universal assumption throughout the paper is that singular values are `not too dense' on $ {\partial} U $, a condition we make formal by requiring $$ \int_{\partial U} \log\left| x-SV\right|^{-1} d\omega_U(x)<\infty  $$ (see Sect. \ref{subsect-sparseSV}). We note that this assumption is always satisfied if there are  only finitely many singular values on $ \widehat{\partial } U $ (Remark \ref{lemma-finitely-manySV}).

Our main result is the following.

\begin{maintheorem}\label{teo:A} {\bf (Pesin theory for  attracting basins of transcendental maps)}
	Let $ f\colon\mathbb{C}\to\widehat{\mathbb{C}}$ be a meromorphic function, and let $U$ be a simply connected attracting basin, with  fixed point $ p\in U $. Let $ \omega_U $ be the harmonic measure on $ \partial U $ with base point $ p $. Assume $ f $ has positive Lyapunov exponent, that is  $ \log \left| f'\right| \in L^1(\omega_U) $ with $ \int_{\partial U} \log \left| f'(x)\right| d\omega_U(x)>0$. Suppose also that  $ \int_{\partial U} \log\left| x-SV\right|^{-1} d\omega_U(x)<\infty  $.
	
	\noindent Then,  for every countable collection of measurable sets $ \left\lbrace A_k\right\rbrace _k\subset\partial U $ with $ \omega_U(A_k)>0 $, and for $ \omega_U $-almost every $ x_0\in\partial U $,
	 there exists a backward orbit $ \left\lbrace x_n\right\rbrace _n \subset\partial U$ and $ r>0 $ such that
	\begin{enumerate}[label={\em (\alph*)}]
		\item  $ x_{n_k}\in A_k $ for some sequence $ n_k\to\infty $;
		\item the inverse branch $ F_n $ sending $x_0 $ to $ x_n $ is well-defined in $ D(x_0,r) $;
		\item  $ \textrm{\em diam } F_n(D(x_0, r))\to 0$, as $ n\to\infty $.
	\end{enumerate}
\end{maintheorem}

Note that, in particular, for $ \omega_U $-almost every $ x_0\in\partial U $ there exists a backward orbit $ \left\lbrace x_n\right\rbrace _n $, and  inverse branches $ \left\lbrace F_n\right\rbrace _n $ of $f^n$, well-defined in $D(x_0, r)$, such that $ \left\lbrace x_n\right\rbrace _n $ is dense on $ \partial U$.

If we consider parabolic basins or Baker domains, the situation is even more unfavorable, since no harmonic measure on $ \partial U $ is $ f $-invariant. Nevertheless,  there exists a $ \sigma $-finite measure which is absolutely continuous with respect  to harmonic measure on $\partial U$, invariant under $ f $, recurrent and ergodic. By means of the first return map, we develope a similar result for parabolic basins and Baker domains.  As far as we are aware, this result is new even for parabolic basins of polynomials, in which case the assumptions are always trivially satisfied.
\begin{maintheorem}\label{teo:B}{\bf (Parabolic Pesin theory)}
	Let $ f\colon\mathbb{C}\to\mathbb{C}$ be a meromorphic function, and let $U$ be a simply connected parabolic basin or   Baker domain,  such that $ SV\cap U $ are compactly contained in $ U $. Let $ \omega_U $ be a harmonic measure on $ \partial U$, such that $ \log \left| f'\right| \in L^1(\omega_U) $ with $ \int_{\partial U} \log \left| f'\right| d\omega_U>0$. Assume  there exists $ \varepsilon >0 $ such that, if $\partial U_{+\varepsilon} \coloneqq \left\lbrace z\in\mathbb{C}\colon \textrm{\em dist}(z, \partial U)<\varepsilon \right\rbrace $, the set of singular values of $ f $ in $ \partial U_{+\varepsilon} $ is finite.

\noindent	Then, for  every countable collection of measurable sets $ \left\lbrace A_k\right\rbrace _k\subset\partial U $ with $ \omega_U(A_k)>0 $, and for $ \omega_U $-almost every $ x_0\in\partial U $ there exists a backward orbit $ \left\lbrace x_n\right\rbrace _n \subset\partial U$ and $ r>0 $ such that
\begin{enumerate}[label={\em (\alph*)}]
	\item   $ x_{n_k}\in A_k $ for some sequence $ n_k\to\infty $;
	\item the inverse branch $ F_n $ sending $x_0 $ to $ x_n $ is well-defined in $ D(x_0,r) $;
	\item  for every subsequence $\left\lbrace x_{n_j}\right\rbrace _j$ with $ x_{n_j}\in D(x_0, r) $, 
	$ \textrm{\em diam } F_{n_j}(D(x_0, r))\to 0$, as $ j\to\infty $.
\end{enumerate}
\end{maintheorem}

\begin{remark*}
	In the particular case when $ f $ is an entire function (polynomial or transcendental), instead of assuming that the set of singular values of $ f $ in $ \partial U_{+\varepsilon} $ is finite, it is enough to assume that the set of critical values in $ \partial U_{+\varepsilon} $ is finite (see Section \ref{subsect-entire-parabolic}).
\end{remark*}

Next we present two applications of the theorems above:  developing Pesin theory for centered inner functions, and finding periodic points for transcendental maps.

\subsubsection*{Application. Pesin theory for centered inner functions}
 Let $ \mathbb{D} $ denote the unit disk, and $\partial  \mathbb{D} $ the unit circle, and let $ \lambda $ be the normalized Lebesgue measure in $ \partial\mathbb{D} $.
An \textit{inner function} is, by definition, a holomorphic self-map of the unit disk, $ g\colon\mathbb{D}\to\mathbb{D} $, which preserves
 the unit circle $ \lambda $-almost everywhere in the sense of radial limits.  If, in addition, we have that $ g(0)=0 $, we say that the inner function is \textit{centered}. A point $ \xi\in\partial\mathbb{D} $ is called a {\em singularity} of $ g $ if $ g $ cannot be continued analytically to any neighbourhood of $ \xi $. Denote the set of singularities of $ g $ by $ E(g) $.
 
 It is well-known that the radial extension of a centered inner function preserves the Lebesgue measure $ \lambda $ and is ergodic (see e.g. \cite[Thm. A, B]{DoeringMane1991}). For these reasons, centered inner functions have been widely studied as measure-theoretical dynamical systems
 \cite{Aaronson, DoeringMane1991,Craizer,Craizer2, Aaronson97, ivrii2023inner, ivriiurbanskiNOU}. 
 
 An important subset of centered inner functions are the ones with finite entropy, or equivalently, when $ \log \left| g'\right|\in L^1(\partial\mathbb{D})$  \cite{Craizer}. Such a property translates to a greater control on the dynamics, from different points of view (see e.g. \cite{Craizer, Craizer2, ivrii2023inner, ivriiurbanskiNOU}).
In particular, centered  inner functions with finite entropy are natural candidates to apply the theory developed above. Moreover, due to its rigidity and symmetries, we will deduce some additional properties. 
In general, inner functions present a highly discontinuous behaviour in $ \partial\mathbb{D} $, so it is noteworthy the great control we achieve, only by assuming that $ \int_{\partial \mathbb{D}} \log\left| x-SV\right|^{-1} d\lambda(x)<\infty  $, where $ \lambda $ denotes the Lebesgue measure on $ \partial\mathbb{D} $.

Let us denote the \textit{radial segment} at $ \xi $ of length $ \rho>0 $ by \[R_{\rho}(\xi)\coloneqq \left\lbrace r\xi\colon r\in(1-\rho,1)\right\rbrace ,\]
and  the \textit{Stolz angle} at $ \xi $ of length $ \rho>0 $ and opening $ \alpha\in (0, \frac{\pi}{2}) $ by 	\[\Delta_{\alpha, \rho}(\xi)=\left\lbrace z\in\mathbb{D}\colon \left| \textrm{Arg }\xi- \textrm{Arg }(\xi-z)\right| <\alpha, \left| z \right| >1-\rho \right\rbrace .\] 

Using the same construction as in Theorem \ref{teo:A}, we deduce the following.

\begin{maincor}\label{corol:D}{\bf (Pesin theory for centered inner functions)}
		Let $ g\colon\mathbb{D}\to\mathbb{D} $ be a centered inner function,  such that $ \log \left| g'\right|\in L^1(\partial\mathbb{D})$ and $ \int_{\partial \mathbb{D}} \log\left| x-SV\right|^{-1} d\lambda(x)<\infty  $. Fix $ \alpha\in (0,\pi/2) $. 
		Then, for  every countable collection of measurable sets $ \left\lbrace A_k\right\rbrace _k\subset\partial \mathbb{D} $ with $ \lambda(A_k)>0 $, and for $ \lambda$-almost every $ \xi_0\in\partial \mathbb{D} $ there exists a backward orbit $ \left\lbrace \xi_n\right\rbrace _n \subset\partial \mathbb{D}$ and $ \rho_0>0 $ such that
		\begin{enumerate}[label={\em (\alph*)}]
			\item  $ \xi_{n_k}\in A_k$ for some sequence $ n_k\to\infty $;

		\item the inverse branch $ G_n $ of $ g^n $ sending $\xi_0 $ to $ \xi_n $ is well-defined in $ D(\xi_0,\rho_0) $;
		\item for all $ \rho\in (0,\rho_0) $, $ G_n(R_{\rho}(\xi_0))\subset \Delta_{  \alpha, \rho}(\xi_n)$.
	\end{enumerate}
	In particular, the set of singularities $ E(g) $ has zero $ \lambda $-measure.
\end{maincor}

\subsubsection*{Application. Periodic boundary points in transcendental dynamics}
One possible application of Pesin theory is to find periodic points. In complex dynamics, this idea was already exploited by F. Przytycki and A. Zdunik to find periodic points on the boundaries of basins for rational maps \cite{Przytycki-Zdunik}. Hence, we aim to apply Theorems \ref{teo:A} and \ref{teo:B} to find periodic boundary points in the transcendental setting. 
 
 To do so, we need a stronger assumption on the orbits of singular values inside $ U $. Recall that, given a simply connected domain $ U $, we say that $ C\subset U $ is a {\em crosscut}  if $ C $ is a Jordan arc such that $ \overline{C}= C\cup \left\lbrace a,b\right\rbrace  $, with $ a,b\in\partial U $, $ a\neq b $. Any of the two connected components of $ U\smallsetminus C $ is a {\em crosscut neighbourhood}.
We define the 
\textit{postsingular set} of $ f $ as \[ P(f)\coloneqq \overline {\bigcup\limits_{s\in SV}\bigcup\limits_{n\geq 0} f^n(s)}.\]

\begin{maincor}\label{corol:C}{\bf (Periodic boundary points are dense)}
	Under the hypotheses of Theorem \ref{teo:A} or \ref{teo:B}, assume, in addition, that there exists a crosscut neighbourhood $ N_C $ with $ N_C\cap P(f)=\emptyset $. 
	Then, periodic points are dense on $ \partial U $.
\end{maincor}


\subsubsection*{Lyapunov exponents of transcendental maps}
Finally, we note that one essential hypothesis  in our results is that $ \log \left| f'\right|  $ is integrable with respect to the harmonic measure $ \omega_U $, and hence the Lyapunov exponent \[\chi_{\omega_U} (f)=\int_{\partial U}\log \left| f'\right| d\omega_U\] is well-defined. We also require that $  \chi_{\omega_U}$ is positive. These facts are well-known for simply connected basins of attraction of rational maps \cite{Przytycki-Hausdorff_dimension, Przytycki-Lyapunovexponents}, but unexplored for transcendental maps. In this paper we give several conditions, concerning the order of growth of the function and the shape of the Fatou component, which imply that the Lyapunov exponent is well-defined and non-negative.

One of the main challenges that appears when considering transcendental maps is that $ \left| f'\right|  $ may not be bounded in $ \partial U $, even when taking the derivative with respect to the spherical metric. Indeed, $ \left| f'\right|  $ is not bounded around the essential singularity, and the growth can be arbitrarily fast. Thus, we introduce the following concept, which relates the growth of the function with the shape of the Fatou component.
\begin{defi*}{\bf (Order of growth in a sector for meromorphic functions)}
	Let $ f\colon\mathbb{C}\to\widehat{\mathbb{C}}$ be a transcendental meromorphic function, and let $ U\subset \mathbb{C} $ be an invariant Fatou component for $ f $. We say that $ U $ is {\em asymptotically contained in a sector of angle $ \alpha\in   \left( 0, 1\right) $ with order of growth $ \beta >0 $} if there exists $ R>0 $, $ \xi\in \partial\mathbb{D}$ and $ \alpha \in (0,1) $, such that, if
	\[ S_R=S_{R, \alpha}\coloneqq \left\lbrace z\in\mathbb{C}\colon \left| z\right| >R , \ \left|\textrm{Arg }\xi-\textrm{Arg }z\right| <\pi \alpha\right\rbrace \]
	then, 
	\begin{enumerate}[label={(\alph*)}]
		\item  $U\cap \left\lbrace z\in\mathbb{C}\colon \left| z\right| >R\right\rbrace \subset S_{R}$;
		\item $ f $ has order of growth $ \beta>0 $ in $  S_{R} $, i.e. there exists $ A,B>0 $ such that for all $ z\in S_{R}$,  \[A \cdot e^{B\cdot \left| z\right| ^{-\beta}}\leq \left| f'(z)\right| \leq A \cdot e^{B\cdot \left| z\right| ^{\beta}}.\]
	\end{enumerate}
\end{defi*}

Under this asusmption on the growth, we are able to prove the following.

\begin{mainprop}\label{prop:D}{\bf ($ \log \left| f'\right|  $ is $ \omega_U $-integrable)}\label{prop-lyapunov-exponents well-defined}
	Let $ f\colon\mathbb{C}\to\widehat{\mathbb{C}}$ be a meromorphic function, and let $ U $ be an invariant Fatou component for $ f $. Let $ \omega_U $ be a harmonic measure on $ \partial U $. Assume $ U $ is asymptotically contained in a sector of angle $ \alpha\in \left( 0, 1\right)  $, with order of growth $ \beta\in(0, \frac1{2\alpha}) $. Then, $ \log \left| f'\right| \in L^1(\omega_U) $. 
\end{mainprop}

\begin{mainprop}\label{prop:E}{\bf (Non-negative Lyapunov exponents)}\label{prop-non-negative-lyapunov}
		Let $ f\colon\mathbb{C}\to\widehat{\mathbb{C}}$ be a meromorphic function, and let $ U $ be a simply connected attracting basin, with fixed point $ p\in U $. Let $ \omega_U $ be the harmonic measure in $ \partial U $ with base point $ p $.  Assume\begin{enumerate}[label={\em (\alph*)}]
		\item $ U $ is asymptotically contained in a sector of angle $ \alpha\in \left( 0, 1\right)  $, with order of growth $ \beta\in(0, \frac1{2\alpha}) $;
		\item $ \int_{\partial U} \log\left| x-SV\right|^{-1} d\omega_U(x)<\infty  $.
	\end{enumerate}
	Then, \[\chi_{\omega_U}(f)=\int_{\partial U}\log \left| f'\right| d\omega_U\geq 0.\]
\end{mainprop}

\begin{remark*}
	The statements of Theorems \ref{teo:A} and \ref{teo:B}, and Corollary \ref{corol:D} are a simplified version of the ones proved inside the paper (respectively, Thms. \ref{thm-pesin}, \ref{thm-pesin-entire} and \ref{thm-pesin-inner}). These stronger statements are formulated in terms of the Rohklin's natural extension of the corresponding dynamical systems. Since this construction is not common in transcendental dynamics (although it is standard in ergodic theory), we chose to present our results in this simplified (and weaker) form.  For convenience of the reader, all the needed results about Rohklin's natural extension can be found in Section \ref{subsect-Rokhlin's-natural-ext}. 
	
	Although the results are stated here for meromorphic functions, we shall work in the more general class $ \mathbb{K} $ of functions with countably many singularities; in particular, this allows us to consider \textit{periodic} attracting basins of meromorphic maps, not only invariant ones. The technicalities that arise when working in class  $ \mathbb{K} $ are explained in Section \ref{subsect-components-Fatou-K}. 
\end{remark*}

\begin{remark*}
	It seems plausible to extend the previous results to multiply connected Fatou components, as long the harmonic measure is well-defined. This is always the case of Fatou components in class $ \mathbb{K} $ \cite{FerreiraJove}.
\end{remark*}

\begin{notation*}
	 Thorughout the paper, $ f^{-1}(z) $ denotes all the preimages under $ f $ of the point $ z $ (setwise). When we refer to the inverse branch, we write $ F_{n,z,w} $ meaning that $ F_{n,z,w}$ is an inverse branch   of $ f^{n} $ sending $ z $ to $ w $. When it is clear from the context, we  just write $ F_n $.
\end{notation*}

\vspace{0.2cm}
{\bf Structure of the paper.} In
Section \ref{section-preliminaries} we gather all the preliminary results used throughout the paper, including results on Rohklin's natural extension. Section \ref{sect3-setting} is devoted to comment on the setting and the hypotheses we work with.  In Sections \ref{section-pesinA} and \ref{sect-pesin-entire}  we develop  Pesin theory, proving Theorems \ref{teo:A} and \ref{teo:B}, respectively.  Corollary \ref{corol:D} is proven in Section \ref{sect-inner-functions}, while Corollary \ref{corol:C} is proven in Section \ref{sect-periodic-points}. Section \ref{sect-lyapunov} deals with Lyapunov exponents, proving Propositions \ref{prop:D} and \ref{prop:E}.

\vspace{0.2cm}
{\bf Acknowledgments.} First of all, I am indebted to my supervisor, Núria Fagella.
I also want to thank Lasse Rempe, for asking me the question which motivates this work. Besides, I am indebted with Anna Zdunik, for all her explanations and her encouragement for starting this project, as well as with Oleg Ivrii, for   interesting discussions and his valuable insights, and for pointing out the right assumption in Theorem \ref{teo:A}.
I also want to thank Phil Rippon for his help and his kindness for sharing with me  some of his knowledge about harmonic measure. I am also thankful to Jana Rodríguez-Hertz and the KTH in Stockholm, for a master class in Pesin theory, and Nikolai Prochorov and the Séminaire Rauzy  in Marseille, for interesting discussions on the topic.

\section{Preliminaries}\label{section-preliminaries}

In this section we gather the tools we use throughout the article, putting special emphasis on Rokhlin's natural extension.  

\subsection{Distortion estimates for univalent maps}

We need the following result concerning the distortion for univalent maps.

\begin{thm}{\bf (Koebe's distortion estimates, {\normalfont \cite[Sect. 23.1]{UrbanskiRoyMunday3}})} \label{thm-Koebe} Let $ z\in\mathbb{C} $, $ r>0 $, and let $ \varphi\colon D(z,r)\to\mathbb{C} $ be a univalent map. Then, 
	\[ D\left( \varphi(z), \frac{1}{4}\cdot \left| \varphi'(z)\right| \cdot r\right)  \subset\varphi(D(z,r)).\]
Moreover, for all $ \lambda\in (0,1) $ and $ z\in \overline{D(x,\lambda r)} $, it holds
	\[\left| \varphi'(x)\right| \cdot \dfrac{1-\lambda}{(1+\lambda)^3}\leq\left| \varphi'(z)\right| \leq  { \left| \varphi'(x)\right| }\cdot \dfrac{1+\lambda }{(1-\lambda )^3},\]
\[  \varphi(D(x,\lambda r))\subset D\left( \varphi(x), r\cdot { \left| \varphi'(x)\right| }\cdot \dfrac{1+\lambda }{(1-\lambda )^3}\right) .\]
\end{thm}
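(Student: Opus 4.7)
The plan is to reduce the statement to the classical Koebe theory for the normalized class $\mathcal{S}$ of univalent functions $\psi\colon\mathbb{D}\to\mathbb{C}$ with $\psi(0)=0$ and $\psi'(0)=1$. Given the map $\varphi\colon D(x,r)\to\mathbb{C}$ (using $x$ as the center to match the notation of the last two displays), I would introduce
\[\psi(w)\coloneqq \frac{\varphi(x+rw)-\varphi(x)}{r\,\varphi'(x)},\qquad w\in\mathbb{D}.\]
A direct computation shows $\psi\in\mathcal{S}$, and all three conclusions of the theorem will follow from applying the corresponding results for $\psi$ and then undoing this change of variables via $z=x+rw$.

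For the first inclusion, I would invoke the Koebe $1/4$-theorem: for every $\psi\in\mathcal{S}$, the image $\psi(\mathbb{D})$ contains $D(0,1/4)$. The standard derivation goes through the area theorem applied to the square-root transform $\sqrt{\psi(w^2)}$, or alternatively by using that if $c\notin\psi(\mathbb{D})$, then $c\psi/(c-\psi)\in\mathcal{S}$, giving $|c|\geq 1/4$ via the bound $|a_2|\leq 2$ on the second Taylor coefficient. Pulling this back with $w=(z-x)/r$ (taking $x=z$ to match the first display of the statement) yields $D(\varphi(x),\tfrac14|\varphi'(x)|r)\subset\varphi(D(x,r))$.

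For the two-sided bound on $|\varphi'|$, I would apply the classical Koebe distortion theorem, which asserts that for $\psi\in\mathcal{S}$ and $|w|=\lambda<1$,
\[\frac{1-\lambda}{(1+\lambda)^3}\leq |\psi'(w)|\leq \frac{1+\lambda}{(1-\lambda)^3}.\]
This is typically obtained by plugging the disk automorphism $\zeta\mapsto (\zeta+w)/(1+\bar w\zeta)$ into $\psi$, subtracting and normalizing to land back in $\mathcal{S}$, and then using $|a_2|\leq 2$ on the resulting function to produce a differential inequality for $\log\psi'$, which integrates to the stated bounds. Under the chain rule one has $\psi'(w)=\varphi'(x+rw)/\varphi'(x)$, so with $w=(z-x)/r$ and $|w|\leq \lambda$ this directly gives the claimed comparison between $|\varphi'(z)|$ and $|\varphi'(x)|$.

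Finally, for the image containment $\varphi(D(x,\lambda r))\subset D(\varphi(x),r|\varphi'(x)|\tfrac{1+\lambda}{(1-\lambda)^3})$, I would integrate the upper distortion bound along the radial segment from $x$ to an arbitrary $z\in D(x,\lambda r)$: parametrizing $\gamma(t)=x+t(z-x)$ for $t\in[0,1]$ and using the chain-rule/growth-theorem estimate $|\psi(w)|\leq \lambda/(1-\lambda)^2\leq \lambda(1+\lambda)/(1-\lambda)^3$ for $\psi\in\mathcal{S}$, which itself follows by integrating the distortion bound on $|\psi'|$ along a radius in $\mathbb{D}$. The only nontrivial ingredients are the area theorem and the coefficient bound $|a_2|\leq 2$ (Bieberbach), which are entirely classical; the rest is bookkeeping with the rescaling $w=(z-x)/r$, so I anticipate no real obstacle beyond carefully matching the normalization in the statement.
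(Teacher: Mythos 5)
Your proposal is correct. The paper does not prove Theorem \ref{thm-Koebe} at all; it is quoted as a known preliminary from the cited reference, so there is no in-paper argument to compare against. Your reduction to the normalized class $\mathcal{S}$ via $\psi(w)=\bigl(\varphi(x+rw)-\varphi(x)\bigr)/\bigl(r\varphi'(x)\bigr)$ and the subsequent appeal to the Koebe $1/4$-theorem, the classical distortion inequalities, and the growth theorem is exactly the standard derivation, and the bookkeeping checks out: $\psi\in\mathcal{S}$, $\psi'(w)=\varphi'(x+rw)/\varphi'(x)$, and the growth bound $|\psi(w)|\le \lambda/(1-\lambda)^2 \le (1+\lambda)/(1-\lambda)^3$ for $|w|\le\lambda$ yields the final inclusion. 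One small observation worth recording: the statement as printed reuses the letter $z$ both as the center of the disk in the first display and as the moving point in the second and third displays; you silently repaired this by switching to $x$ for the center, which is the intended reading.
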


\subsection{Abstract Ergodic Theory}
We recall some basic notions used in abstract ergodic theory and measure theory (for more details, see e.g. \cite{PrzytyckiUrbanski, Hawkins, UrbanskiRoyMunday1}).

\begin{lemma}{\bf (First Borel-Cantelli lemma, {\normalfont \cite[1.12.89]{Bogachev}})}\label{lemma-borel-cantelli}
	Let $ (X,\mathcal{A}, \mu) $ be a probability space, let $ \left\lbrace A_n\right\rbrace _n\subset \mathcal{A} $, and let \[B\coloneqq \left\lbrace x\in X\colon x\in A_n\textrm{ for infinitely many }n\textrm{'s}\right\rbrace =\bigcap\limits_{k=1}^\infty\bigcup_{n=k}^\infty A_n .\] Then, if $ \sum_{n=1}^{\infty}\mu (A_n)<\infty $, it holds $ \mu (B)=0 $.
\end{lemma}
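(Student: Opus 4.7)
The plan is to exploit the nested structure $B = \bigcap_{k=1}^\infty B_k$ where $B_k := \bigcup_{n=k}^\infty A_n$. Note that the sequence $\{B_k\}_k$ is decreasing in $k$, so $B$ is their intersection and continuity of measure from above will be the engine of the argument. The hypothesis $\sum_n \mu(A_n) < \infty$ in particular gives $\mu(B_1) \leq \sum_{n=1}^\infty \mu(A_n) < \infty$, which is the finiteness condition required to apply continuity from above (and this is why the probability/finite-measure setting matters here).

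First, by countable subadditivity of $\mu$, for each $k \geq 1$ I would write
\[
\mu(B_k) = \mu\!\left(\bigcup_{n=k}^\infty A_n\right) \leq \sum_{n=k}^\infty \mu(A_n).
\]
Since $\sum_{n=1}^\infty \mu(A_n)$ converges, its tail satisfies $\sum_{n=k}^\infty \mu(A_n) \to 0$ as $k \to \infty$. Consequently, $\mu(B_k) \to 0$.

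Finally, since $B_1 \supset B_2 \supset \cdots$ with $\mu(B_1) < \infty$, continuity of $\mu$ from above yields
\[
\mu(B) = \mu\!\left(\bigcap_{k=1}^\infty B_k\right) = \lim_{k\to\infty} \mu(B_k) = 0,
\]
which is the desired conclusion. There is no real obstacle here; the only delicate point worth flagging is that one must verify finiteness of $\mu(B_1)$ before invoking continuity from above, but this is immediate from the summability hypothesis (and would fail without it, as the classical counterexamples for the second Borel--Cantelli-type statements show).
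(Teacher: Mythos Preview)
Your proof is correct and is the standard argument for the first Borel--Cantelli lemma. The paper does not give its own proof of this statement; it simply cites \cite[1.12.89]{Bogachev}, so there is nothing to compare.
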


\begin{defi}{\bf (Ergodic properties of measurable maps)}\label{def-ergodic-properties} Let $ (X,\mathcal{A}, \mu) $ be a measure space, and let $ T\colon X\to X $ be measurable. Then,\begin{itemize}
			\item $ T $ is {\em non-singular}, if, for every $ A\in\mathcal{A} $, it holds $ \mu (T^{-1}(A))=0 $ if and only if  $ \mu (A)=0 $;
		\item $ \mu $ is {\em $ T $-invariant} if $ T $ is measure-preserving, i.e. if $ \mu (T^{-1}(A))=\mu (A) $, for every $ A\in\mathcal{A} $;
		\item $ T $ is {\em recurrent} with respect to $ \mu $, if for every  $ A\in\mathcal{A} $ and $ \mu $-almost every $ x\in A $, there exists a sequence $ n_k\to \infty $ such that $ T^{n_k}(x)\in A $;
		\item $ T $ is {\em ergodic} with respect to $ \mu $, if $ T $ is non-singular and for every $ A\in\mathcal{A} $ with $ T^{-1}(A)=A $, it holds $ \mu(A)=0$ or $ \mu (X\smallsetminus A)=0 $.
	\end{itemize}
\end{defi}
In the sequel, if it is clear with which measure are we working with, we omit the dependence on the measure. Note that if $ T $ is invertible and ergodic, then $ T^{-1} $ is also ergodic.

\begin{thm}{\bf (Almost every orbit is dense, {\normalfont\cite[Prop. 1.2.2]{Aaronson97}})}\label{thm-almost-every-orbit-dense}
	Let $ (X,\mathcal{A}, \mu) $ be a measure space, and let $ T\colon X\to X $ be non-singular. Then, the following are equivalent.\begin{enumerate}[label={\em (\alph*)}]
		\item $ T $ is ergodic and recurrent with respect to $ \mu $.
		\item\label{thm-orbitesdenses-b}  For every  $ A\in\mathcal{A} $ with $ \mu (A)>0 $ and $ \mu $-almost every $ x\in X $, there exists a sequence $ n_k\to \infty $ such that $ T^{n_k}(x)\in A $.
	\end{enumerate}
\end{thm}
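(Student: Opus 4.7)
The plan is to prove the equivalence by treating each implication separately, with the key insight that the non-singular (rather than measure-preserving) hypothesis forces us to extract invariance of sets from recurrence itself, rather than from preservation of measure.

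For the direction (a) $\implies$ (b), fix $A \in \mathcal{A}$ with $\mu(A) > 0$ and set $C \coloneqq \bigcup_{n=0}^{\infty} T^{-n}(A)$, the set of points whose forward orbit eventually visits $A$. By construction $T^{-1}(C) \subseteq C$. The first step is to argue that $E \coloneqq C \setminus T^{-1}(C)$ satisfies $\mu(E) = 0$. Indeed, if $x \in E$, then $T^{n_0}(x) \in A$ for some minimal $n_0 \geq 0$ and $T(x) \notin C$; the latter forces $T^{k}(x) \notin A$ for every $k \geq 1$, which together with minimality of $n_0$ forces $n_0 = 0$ and hence $E \subseteq A$ with $T^k(E) \cap E = \emptyset$ for all $k \geq 1$. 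Applying recurrence (Def. \ref{def-ergodic-properties}) to $E$ then yields $\mu(E) = 0$. This means $T^{-1}(C) = C$ modulo $\mu$, and ergodicity together with $\mu(C) \geq \mu(A) > 0$ gives $\mu(X \setminus C) = 0$.

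The second step of (a) $\implies$ (b) is to upgrade the property "orbit visits $A$ at least once" (which holds on the full-measure set $C$) to "orbit visits $A$ infinitely often". By recurrence applied to $A$, the set $A' \coloneqq \{x \in A : T^{n_k}(x) \in A \text{ for some } n_k \to \infty\}$ satisfies $\mu(A \setminus A') = 0$. If $B$ denotes the conclusion of (b), then a point $x \in C \setminus B$ would have a last time $n_0$ with $T^{n_0}(x) \in A \setminus A'$, so $C \setminus B \subseteq \bigcup_{n \geq 0} T^{-n}(A \setminus A')$; non-singularity gives $\mu(T^{-n}(A \setminus A')) = 0$ for each $n$, and countable additivity finishes the argument.

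For the reverse direction (b) $\implies$ (a), recurrence is immediate: given $A$ with $\mu(A)>0$, (b) guarantees $\mu$-a.e.\ $x \in X$ (in particular a.e.\ $x \in A$) has iterates returning to $A$ along some subsequence $n_k \to \infty$. For ergodicity, suppose $T^{-1}(A) = A$ with $\mu(A) > 0$; iterating gives $T^{-n}(A) = A$ for all $n \geq 0$, so the event "$T^n(x) \in A$ for some $n$" coincides with "$x \in A$", and (b) forces $\mu(X \setminus A) = 0$.

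The main technical obstacle is the very first step: showing that $T^{-1}(C) = C$ modulo $\mu$ in the non-singular setting. For a measure-preserving $T$ this is automatic from $T^{-1}(C) \subseteq C$ and $\mu(T^{-1}(C)) = \mu(C)$, but here it must be extracted from recurrence itself by exhibiting the would-be difference as a wandering set. Once this is in place, the remaining manipulations (infinite-to-finite visits, invariance of $A$ under preimages) are straightforward applications of non-singularity and countable additivity.
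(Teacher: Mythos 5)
The paper does not prove this statement: it is quoted from Aaronson's book, cited as \cite[Prop.~1.2.2]{Aaronson97}. Your argument is a correct, self-contained proof and follows the standard route (the same one Aaronson uses): pass to the "hitting set" $C = \bigcup_{n\geq 0} T^{-n}(A)$, show its measurable invariance from recurrence, invoke ergodicity to get full measure, and then upgrade "hit at least once" to "hit infinitely often" using non-singularity.

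One small step is elided, and it is worth flagging because the paper's Definition~\ref{def-ergodic-properties} phrases ergodicity in terms of \emph{exact} invariance $T^{-1}(A)=A$, whereas you produce only $T^{-1}(C)\subseteq C$ with $\mu(C\smallsetminus T^{-1}(C))=0$, i.e.\ invariance modulo $\mu$. To invoke ergodicity you should replace $C$ by $C'=\bigcap_{n\geq 0}T^{-n}(C)$; since the sets $T^{-n}(C)$ decrease, $T^{-1}(C')=C'$ exactly, and $\mu(C\smallsetminus C')\leq\sum_{k\geq 0}\mu(T^{-k}(E))=0$ by non-singularity, where $E=C\smallsetminus T^{-1}(C)$. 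This is a standard repair, but under the paper's stated definition it should be made explicit. Apart from that, the identification of $E$ as the set of never-returning points of $A$ (for which you give a slightly roundabout "wandering set" description; one can simply apply recurrence to $A$ directly to conclude $\mu(E)=0$), and both halves of the equivalence, are correct.
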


\begin{thm}{\bf (Birkhoff Ergodic Theorem, {\normalfont \cite[Sect. 4.1]{KatokHasselblat}})} \label{thm-birkhoff} Let $ (X,\mathcal{A},\mu) $ be a probability space together with a measure-preserving transformation $ T\colon X\to X $, and let $ \varphi\in L^1(\mu) $. Then, \[\lim\limits_{n}\frac{1}{n}\sum_{k=0}^{n-1}\varphi(T^k(x))\] exists for $ \mu $-almost every $ x\in X $. If $ T $ is an automorphism, the equality \[\lim\limits_{n}\frac{1}{n}\sum_{k=0}^{n-1}\varphi(T^k(x))=\lim\limits_{n}\frac{1}{n}\sum_{k=0}^{n-1}\varphi(T^{-k}(x))\] holds $ \mu $-almost everywhere.
	 Finally, if $ T $ is ergodic with respect to $ \mu $, then for $ \mu $-almost every $ x\in X $ it holds \[\lim\limits_{n}\frac{1}{n}\sum_{k=0}^{n-1}\varphi(T^k(x))=\int_X\varphi  d\mu.\]
\end{thm}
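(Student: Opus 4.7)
The plan is to prove the classical Birkhoff Ergodic Theorem via the Maximal Ergodic Theorem approach, which is the most transparent route and avoids any Hilbert-space detour. Set $S_n\varphi(x) = \sum_{k=0}^{n-1}\varphi(T^k x)$, and define the $\limsup$ and $\liminf$ functions $\overline{\varphi}(x) = \limsup_n \frac{1}{n}S_n\varphi(x)$ and $\underline{\varphi}(x) = \liminf_n \frac{1}{n}S_n\varphi(x)$. A quick calculation using $S_{n+1}\varphi(x) = \varphi(x) + S_n\varphi(Tx)$ shows that both $\overline{\varphi}$ and $\underline{\varphi}$ are $T$-invariant modulo $\mu$-null sets, because the extra term $\varphi(x)/n$ vanishes in the limit. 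The game is then to show $\overline{\varphi} = \underline{\varphi}$ almost everywhere, because the invariance and the layer-cake argument will do the rest.

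The key technical ingredient I would establish first is the \emph{Maximal Ergodic Theorem}: letting $M_n = \max\{0, S_1\varphi, \ldots, S_n\varphi\}$, one has $\int_{\{M_n > 0\}} \varphi\, d\mu \ge 0$. The slickest proof here is Garsia's: on $\{M_n > 0\}$ one checks the pointwise inequality $\varphi \ge M_n - M_n\circ T$, from which integration and $T$-invariance of $\mu$ yield the result (using that $M_n \in L^1$ since $\varphi \in L^1$). Applied to $\varphi - a \cdot \mathbf{1}_{E_a}$, where $E_a = \{\overline{\varphi} > a\}$ is $T$-invariant, this gives $\int_{E_a}(\varphi - a)\, d\mu \ge 0$, i.e.\
\[
\int_{E_a}\varphi\, d\mu \ge a\, \mu(E_a).
\]
The symmetric statement for $\underline{\varphi}$ gives $\int_{F_b}\varphi\, d\mu \le b\,\mu(F_b)$ where $F_b = \{\underline{\varphi} < b\}$. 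Combining these on the intersection $E_a \cap F_b$ (which is $T$-invariant) for rationals $b < a$ forces $\mu(E_a \cap F_b) = 0$, so $\overline{\varphi} = \underline{\varphi}$ almost everywhere; call the common value $\widetilde{\varphi}$.

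Integrability of $\widetilde{\varphi}$ and the identity $\int_X \widetilde{\varphi}\, d\mu = \int_X \varphi\, d\mu$ follow by Fatou's lemma applied to $\frac{1}{n}|S_n\varphi|$ together with $T$-invariance of $\mu$ (which gives $\int \frac{1}{n}S_n\varphi\, d\mu = \int \varphi\, d\mu$). When $T$ is ergodic, $\widetilde{\varphi}$ is $T$-invariant and therefore constant $\mu$-almost everywhere, and integrating identifies the constant as $\int_X \varphi\, d\mu$, yielding the final statement. For the automorphism case, applying the theorem already proved to $T^{-1}$ (which is also measure-preserving) gives almost-everywhere convergence of $\frac{1}{n}\sum_{k=0}^{n-1}\varphi(T^{-k}x)$ to some invariant $\widetilde{\varphi}^-$; since both $\widetilde{\varphi}$ and $\widetilde{\varphi}^-$ are invariant with the same integral $\int\varphi\, d\mu$, applying the uniqueness of the invariant limit (by running the argument on $\widetilde{\varphi} - \widetilde{\varphi}^-$) shows they agree almost everywhere.

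The main obstacle, as always with Birkhoff, is the Maximal Ergodic Theorem: the inequality $\varphi \ge M_n - M_n \circ T$ on $\{M_n > 0\}$ looks almost tautological but is the whole content of the argument, and one must be careful to justify that the chain $M_n \circ T \ge S_k \varphi \circ T = S_{k+1}\varphi - \varphi$ combines correctly across all $k \le n$. Once this pointwise bound is in hand, integrating is immediate, and the rest of the proof is bookkeeping with invariant sets and rational thresholds.
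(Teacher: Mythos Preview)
The paper does not prove this theorem; it is quoted as a standard preliminary with a reference to Katok--Hasselblatt, so there is no argument in the paper to compare your approach against. Your outline via Garsia's Maximal Ergodic Theorem is the standard route, and the first and third parts of the statement are handled correctly.

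The automorphism clause, however, has a gap. You assert that $\widetilde{\varphi}$ and $\widetilde{\varphi}^-$ are both $T$-invariant with the same integral $\int\varphi\,d\mu$, and then invoke ``uniqueness of the invariant limit (by running the argument on $\widetilde{\varphi}-\widetilde{\varphi}^-$)'' to conclude equality. In the non-ergodic case there is no such uniqueness: two $T$-invariant $L^1$ functions with equal total integral need not coincide. Running the ergodic-average argument on the difference $\psi=\widetilde{\varphi}-\widetilde{\varphi}^-$ does not help either, since $\psi$ is already invariant and its Ces\`aro averages equal $\psi$ identically. What you actually need is the finer statement your proof already delivers: the $E_a$/$F_b$ inequalities, applied after restricting to an arbitrary $T$-invariant set $A$, give $\int_A\widetilde{\varphi}\,d\mu=\int_A\varphi\,d\mu$ for every such $A$. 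This identifies $\widetilde{\varphi}$ as the conditional expectation $E[\varphi\mid\mathcal{I}]$ onto the $\sigma$-algebra $\mathcal{I}$ of invariant sets. Since $T$ and $T^{-1}$ have the same invariant sets, $\widetilde{\varphi}^-=E[\varphi\mid\mathcal{I}]$ as well, and the equality follows.
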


\subsection{Rokhlin's natural extension}\label{subsect-Rokhlin's-natural-ext}

A useful technique in the study of non-invertible measure-preserving tranformations is the so-called Rokhlin's natural extension \cite{Rohlin}, which allows us to construct a measure-preserving automorphism in an abstract measure space, mantaining its ergodic properties. However, this technique is often developed for {Lebesgue spaces} with invariant probabilities (see e.g \cite[Sect. 1.7]{PrzytyckiUrbanski} , \cite[Sect. 8.5]{UrbanskiRoyMunday1}). Since we  work also with $ \sigma $-finite measures, we sketch how we can develop the theory  in this more general case.

Let $ (X,\mathcal{A}, \mu) $ be a {\em Lebesgue space}, i.e. a  measure space isomorphic (in the measure-theoretical sense) to an interval (equipped with the Lebesgue measure) together with countably many atoms. Let   $ T\colon X\to X $ be measure-preserving. The measure $ \mu $ is either finite (and  we assume it is a probability measure), or  $ \sigma $-finite. 

Consider the space of backward orbits for $ T $
\[\widetilde{X}=\left\lbrace \left\lbrace x_n\right\rbrace _n\subset X\colon\hspace{0.15cm} x_0\in X, \hspace{0.15cm}T(x_{n+1})=x_n ,\hspace{0.15cm} n\geq0\right\rbrace,\] and define, in a natural way, the following  maps. On the one hand, for $ k\geq 0 $, let 
$ \pi_k\colon \widetilde{X}\to X $ be the projection on the $ k $-th coordinate of $ \left\lbrace x_n\right\rbrace _n $, that is $ \pi_k(\left\lbrace x_n\right\rbrace _n) =x_k$. On the other hand, we define {\em Rokhlin's natural extension} of $ T $ as 
$ \widetilde{T}\colon\widetilde{X}\to\widetilde{X} $, with \[\widetilde{T}(\left\lbrace x_n\right\rbrace _n)= \widetilde{T}(x_0x_1x_2\dots)= T(x_0)x_0x_1\dots\] 
It is clear that $  \widetilde{T}$ is invertible and $ \widetilde{T}^{-1} $ is the shift-map, i.e. \[\widetilde{T}^{-1}(\left\lbrace x_n\right\rbrace _n)= \widetilde{T}^{-1}(x_0x_1x_2\dots)= x_1x_2x_3\dots=\left\lbrace x_{n+1}\right\rbrace _n.\]
Moreover, for each $ k\geq 0 $, the following diagram commutes.
\[\begin{tikzcd}[row sep=-0.2em,/tikz/row 3/.style={row sep=4em}]
	\widetilde{X} \arrow{rr}{\widetilde{T}} & &	\widetilde{X} \\ 
	{\scriptstyle \left\lbrace x_{n+1}\right\rbrace _n} \arrow{ddddddd}{\pi_{k}}& &{\scriptstyle \left\lbrace x_{n}\right\rbrace _n} \arrow{ddddddd}{\pi_{k}} \\ \\ \\	\\ \\ \\ \\ X \arrow{rr}{T}  &&
X \\ {\scriptstyle x_{k+1} }&&{\scriptstyle x_{k} }
\end{tikzcd}
\]
Note that, up to here, the construction is purely symbolic and measures have not appeared yet. In fact, the next step in the construction is to endow the space $ \widetilde{X} $ with an appropriate $ \sigma $-algebra $ \widetilde{\mathcal{A}} $ and a measure $ \widetilde{\mu}$, which makes the previous projections $ \pi_k $ and the map $ \widetilde{T} $ measure-preserving. To do so, we will need the following more general result.

\begin{thm}{\bf (Kolmogorov Consistency Theorem, {\normalfont \cite[Thm. V.3.2]{Parthasarathy}})}\label{thm-consistency}
	Let $ (X_n, \mathcal{A}_n, \mu_n) $ be Lebesgue probability  spaces, and let $ T_{n}\colon X_{n+1}\to X_n $ be measure-preserving. Let \[\widetilde{X}=\left\lbrace \left\lbrace x_n\right\rbrace _n\colon\hspace{0.15cm} x_n\in X_n, \hspace{0.15cm}T_n(x_{n+1})=x_n ,\hspace{0.15cm} n\geq0\right\rbrace.\] 
	and let $ \pi_k\colon \widetilde{X}\to X_k $ be the projection on the $ k $-th coordinate. Then, there exists a $ \sigma $-algebra $ \widetilde{A} $ and a probability measure $ \widetilde{ \mu} $ in $ \widetilde{X} $ such that  $ (\widetilde{X},  \widetilde{A}, \widetilde{ \mu})  $ is a Lebesgue probability space and, for each $ k\geq 0 $, 
	$$ \widetilde{\mu}(\pi^{-1}_k(A))=\mu_k(A) , \hspace{0.5cm} A\in \mathcal{A}_k.$$
\end{thm}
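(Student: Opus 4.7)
The plan is to construct $\widetilde{\mu}$ by first defining a premeasure on the algebra of cylinder sets, then invoking Carathéodory's extension theorem, and finally verifying the Lebesgue space structure. First, I would embed $\widetilde{X}$ into the infinite product $\prod_{n\geq 0} X_n$ as the subset cut out by the compatibility relations $T_n(x_{n+1})=x_n$. Define a cylinder set to be any $C=\pi_k^{-1}(A)$ with $A\in\mathcal{A}_k$, and let $\mathcal{C}$ be the collection of all such sets. Because each $T_n$ maps $X_{n+1}$ onto $X_n$ in a measure-preserving way, any cylinder at level $k$ can be rewritten at any level $\ell>k$ via $\pi_\ell^{-1}((T_k\circ\cdots\circ T_{\ell-1})^{-1}(A))$, so $\mathcal{C}$ is closed under finite unions, intersections and complements, i.e. it is indeed an algebra on $\widetilde{X}$.

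Next, I would define $\widetilde{\mu}(\pi_k^{-1}(A))=\mu_k(A)$. Well-definedness and finite additivity are immediate from the measure-preserving property of the $T_n$: if $\pi_k^{-1}(A)=\pi_\ell^{-1}(A')$ with $\ell>k$, then $\mu_\ell(A')=\mu_k(A)$ by iterated application. This gives a finitely additive probability on the algebra $\mathcal{C}$.

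The heart of the argument, and what I expect to be the main obstacle, is $\sigma$-additivity on $\mathcal{C}$. Equivalently, one must show that if $\{C_m\}_{m\geq 1}\subset\mathcal{C}$ is a decreasing sequence with $\bigcap_m C_m=\emptyset$, then $\widetilde{\mu}(C_m)\to 0$. Here the Lebesgue-space hypothesis is decisive: each $(X_n,\mathcal{A}_n,\mu_n)$ is isomorphic to $[0,\mu_n(X_n)]$ with Lebesgue measure together with atoms, so it carries a compatible compact topology in which $\mu_n$ is inner regular on compact sets. Assuming, for contradiction, $\widetilde{\mu}(C_m)\geq\varepsilon>0$ for all $m$, one replaces each $C_m$ by a slightly smaller cylinder $K_m\subset C_m$ whose defining Borel set at the appropriate level is compact and whose measure is within $\varepsilon/2^{m+1}$ of that of $C_m$. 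A diagonal/Tychonoff argument on the product of these compact approximations then produces a point in $\bigcap_m K_m\subset\bigcap_m C_m$, contradicting emptiness. Hence $\widetilde{\mu}$ is countably additive on $\mathcal{C}$.

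Finally, I would apply Carathéodory's extension theorem to extend $\widetilde{\mu}$ uniquely to the $\sigma$-algebra $\widetilde{\mathcal{A}}$ generated by $\mathcal{C}$, which is precisely the $\sigma$-algebra making every $\pi_k$ measurable. By construction $\widetilde{\mu}(\pi_k^{-1}(A))=\mu_k(A)$ for all $A\in\mathcal{A}_k$, so each $\pi_k$ is measure-preserving. To check that $(\widetilde{X},\widetilde{\mathcal{A}},\widetilde{\mu})$ is a Lebesgue probability space, one uses that a countable projective limit of Lebesgue probability spaces along measure-preserving maps is again Lebesgue: the coordinate maps provide a countably generated, separating family of measurable functions, which together with completeness of $\widetilde{\mu}$ characterises Lebesgue spaces up to isomorphism (cf.\ the structure theorem for Lebesgue spaces used implicitly in \cite{Parthasarathy}). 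This completes the construction.
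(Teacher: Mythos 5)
The paper does not prove this result; it is imported verbatim from Parthasarathy (Thm.~V.3.2), so there is no in-paper argument to compare against. Judged on its own terms, your overall strategy is the right one (finitely additive premeasure on the cylinder algebra, countable additivity via inner regularity and compactness, Carath\'eodory extension, then identify the result as a Lebesgue space), but there is a genuine gap at the crux.

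The gap is in the Tychonoff step. You transfer a compact topology to each $X_n$ via a measure-theoretic isomorphism with an interval plus atoms, but such an isomorphism is not topological, and the bonding maps $T_n\colon X_{n+1}\to X_n$ are only measurable, not continuous, in the transferred topology. Consequently the set of compatible sequences $\{x_n\colon T_n(x_{n+1})=x_n\}$ is not closed in $\prod_n X_n$, so a limit point produced by a diagonal/Tychonoff argument on the compacta $K_m'$ need not satisfy the compatibility relations and need not lie in $\widetilde{X}$ at all. A related difficulty is that the intersection $K_1\cap\cdots\cap K_m$ must again be a cylinder with a compact defining set at level $k_m$, which again needs the $T_n$'s to push compacta to compacta. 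This is exactly the obstruction the theorem is designed to circumvent, so glossing it is a real defect rather than a cosmetic one. The standard repair is to invoke Lusin's theorem first: discard from each $X_{n+1}$ a set of measure less than $\varepsilon/2^{n+2}$ so that the restriction of $T_n$ to the remaining compact set is continuous, and only then choose the inner-approximating compacta $K_m'$ inside those sets; the compatible sequences through the $K_m'$ then form a decreasing nest of nonempty compacta with nonempty intersection. Alternatively one can bypass topology altogether by Rokhlin-disintegrating $\mu_{n+1}$ over $\mu_n$ along $T_n$ and applying the Ionescu--Tulcea extension theorem to the resulting sequence of Markov kernels. Either route closes the gap; as written, the proposal does not.
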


Notice that the theorem above holds whenever $ (X_n, \mathcal{A}_n, \mu_n) $ are Lebesgue measure spaces with finite measure.
The $ \sigma $-algebra $  \widetilde{A}$ can be taken to be the smallest which makes each projection $ \pi_k\colon \widetilde{X}\to X_k $ measurable  \cite[Thm. V.2.5]{Parthasarathy}. Note that $ T_{k}\circ\pi_{k+1}= \pi_k$. Observe that now $ \widetilde{X} $ stands for the space of backward orbits under the sequence of maps $ \left\lbrace T_n\right\rbrace _n $. Hence, one has to think of $ \widetilde{X} $ as the infinite product of the spaces $ \left\lbrace X_n\right\rbrace _n $, since the spaces in $ \left\lbrace X_n\right\rbrace _n $ are {\em a priori} different, and hence there is no endomorphism $ \widetilde{T}\colon\widetilde{X}\to\widetilde{X } $ in general. However, we will use these extensions $ (\widetilde{X},  \widetilde{A}, \widetilde{ \mu})  $ of some appropriate spaces as building blocks for Rokhlin's natural extension for transformations with $ \sigma $-finite invariant measures.

\begin{thm}{\bf (Rokhlin's natural extension for $ \sigma $-finite invariant measures)}\label{thm-natural-extension}
 let $ (X,\mathcal{A}, \mu) $ be a Lebesgue space, and  let $ T\colon X\to X $ be a measure-preserving transformation. Assume $ \mu $ is a $ \sigma $-finite measure, and consider Rokhlin's natural extension $ \widetilde{T}\colon\widetilde{X}\to\widetilde{X} $. Then, there exists a $ \sigma $-algebra $\widetilde{ \mathcal{A}} $ and a $ \sigma $-finite measure $ \widetilde{\mu} $ such that the maps $ \pi_k $ and $ \widetilde{T} $ are measure-preserving. 
\end{thm}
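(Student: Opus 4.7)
The plan is to reduce the $\sigma$-finite case to Kolmogorov's Consistency Theorem (Thm.~\ref{thm-consistency}) by partitioning $X$ into pieces of finite measure, building the extension on each piece separately, and then summing. Since $\mu$ is $\sigma$-finite, choose a measurable partition $X=\bigsqcup_{i\in I} Y_i$ with $I$ countable and $0<\mu(Y_i)<\infty$. This induces a partition $\widetilde{X}=\bigsqcup_i \widetilde{X}_i$ with $\widetilde{X}_i:=\pi_0^{-1}(Y_i)$, and it will suffice to produce a finite measure $\widetilde{\mu}_i$ on each $\widetilde{X}_i$ satisfying $\widetilde{\mu}_i(\pi_k^{-1}(A)\cap\widetilde{X}_i)=\mu(A\cap T^{-k}(Y_i))$, and then set $\widetilde{\mu}:=\sum_i\widetilde{\mu}_i$.

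For each $i$, I would build the finite-measure tower $Y_i^{(k)}:=T^{-k}(Y_i)$, $k\geq 0$. Since $T$ is measure-preserving we have $\mu(Y_i^{(k)})=\mu(Y_i)$, and if $x\in Y_i^{(k+1)}$ then $T^{k+1}(x)\in Y_i$ forces $T(x)\in Y_i^{(k)}$, so $T$ restricts to a map $T\colon Y_i^{(k+1)}\to Y_i^{(k)}$. Moreover, for any measurable $A\subset Y_i^{(k)}$ one has $T^{-1}(A)\subset Y_i^{(k+1)}$, so $\mu(T^{-1}(A)\cap Y_i^{(k+1)})=\mu(T^{-1}(A))=\mu(A)$; i.e., the restriction is measure-preserving. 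Each $(Y_i^{(k)},\mathcal{A}|_{Y_i^{(k)}},\mu|_{Y_i^{(k)}}/\mu(Y_i))$ is a Lebesgue probability space (a measurable subset of a Lebesgue space, renormalized), so Thm.~\ref{thm-consistency} applies to the tower $(T\colon Y_i^{(k+1)}\to Y_i^{(k)})_k$ and yields a Lebesgue probability $\widehat{\nu}_i$ on the projective limit, which is exactly $\widetilde{X}_i$. I would then put $\widetilde{\mu}_i:=\mu(Y_i)\cdot\widehat{\nu}_i$, so that $\widetilde{\mu}_i(\pi_k^{-1}(A)\cap\widetilde{X}_i)=\mu(A\cap Y_i^{(k)})$ for all $k\geq 0$ and all $A\in\mathcal{A}$.

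Let $\widetilde{\mathcal{A}}$ be the $\sigma$-algebra on $\widetilde{X}$ generated by all cylinders $\pi_k^{-1}(A)$ (equivalently, glued from the $\widetilde{\mathcal{A}}_i$), and set $\widetilde{\mu}:=\sum_i\widetilde{\mu}_i$. Since each $\widetilde{\mu}_i$ is finite and $I$ is countable, $\widetilde{\mu}$ is $\sigma$-finite. Summing over $i$ gives, for every $A\in\mathcal{A}$,
\[
\widetilde{\mu}(\pi_k^{-1}(A))\;=\;\sum_i \mu(A\cap T^{-k}(Y_i))\;=\;\mu(A\cap T^{-k}(X))\;=\;\mu(A),
\]
proving that each $\pi_k$ is measure-preserving. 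For $\widetilde{T}$, the identities $\pi_k\circ\widetilde{T}=\pi_{k-1}$ (for $k\geq 1$) and $\pi_0\circ\widetilde{T}=T\circ\pi_0$ give $\widetilde{T}^{-1}(\pi_k^{-1}(A))=\pi_{k-1}^{-1}(A)$ and $\widetilde{T}^{-1}(\pi_0^{-1}(A))=\pi_0^{-1}(T^{-1}(A))$, respectively, so measure-preservation of the $\pi_k$ (together with $T$-invariance of $\mu$ in the $k=0$ case) yields $\widetilde{\mu}(\widetilde{T}^{-1}(C))=\widetilde{\mu}(C)$ on cylinders; a standard $\pi$--$\lambda$ argument extends this to all of $\widetilde{\mathcal{A}}$.

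The main obstacle is the mismatch between the hypotheses: Thm.~\ref{thm-consistency} is formulated for \emph{Lebesgue probability} spaces, while the measure at hand is merely $\sigma$-finite. Overcoming this requires both the partition trick above and the verification that each slice $T^{-k}(Y_i)$ is a Lebesgue probability space once normalized, which is where the hypothesis that $(X,\mathcal{A},\mu)$ is a Lebesgue space is essentially used. A secondary technical point, easily handled by $\sigma$-additivity, is that the identities $\widetilde{\mu}(\pi_k^{-1}(A))=\mu(A)$ must first be established for $\mu(A)<\infty$ and then extended to arbitrary measurable $A$.
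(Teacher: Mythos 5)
Your proposal is correct and follows essentially the same strategy as the paper's proof: partition $X$ into countably many finite-measure pieces, apply Kolmogorov's Consistency Theorem (Thm.~\ref{thm-consistency}) to the preimage tower over each piece, rescale, and glue the resulting probabilities into a $\sigma$-finite measure on $\widetilde{X}$. Your verification that $\widetilde{T}$ is measure-preserving via the cylinder identities $\pi_k\circ\widetilde{T}=\pi_{k-1}$ and $\pi_0\circ\widetilde{T}=T\circ\pi_0$ is a slightly cleaner phrasing of the commutative-diagram computation the paper carries out, but it is the same argument.
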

\begin{proof}
	In the case of $ (X,\mathcal{A}, \mu) $ being a Lebesgue probability space, the statement follows from applying the Kolmogorov Consistency Theorem \ref{thm-consistency} with $ X_n=X $, for all $ n\geq0 $, as indicated in \cite[Thm. 8.4.2]{UrbanskiRoyMunday1}. 
	
	Otherwise,  let $ \left\lbrace X^j_0\right\rbrace _j $ be a partition of $ X $ such that $ \mu (X_0^j) $ is finite, for each $ j\geq 0 $. Without loss of generality, we assume $ \mu (X_0^j)=1 $, for each $ j\geq 0 $, to simplify the computations.
	Then, for all $ n\geq 0 $, $ \left\lbrace X_n^j\coloneqq T^{-n}(X_0^j)\right\rbrace _j $ is also a partition of $ X $ such that $ \mu (X_n^j)=1 $, for each $ j\geq 0 $, since $ T $ is measure-preserving and preimages of disjoint sets are disjoint. 
	
	If we write $ \mathcal{A}_n^j $ and  $ \mu_n^j $ for the restrictions of $ \mathcal{A} $ and $ \mu $ to $ X_n^j $, we have that, for each $ j \geq 0$, $ (X_n^j , \mathcal{A}_n^j, \mu_n^j ) $ is a Lebesgue probability space, and $ T\colon X^j_{n+1}\to X^j_n $ is measure-preserving. Hence, by Theorem \ref{thm-consistency}, there exists a  Lebesgue probability space $ (\widetilde{X^j }, \widetilde{\mathcal{A}^j}, \widetilde{\mu^j }) $ such that 
	\[\widetilde{X^j}=\left\lbrace \left\lbrace x_n\right\rbrace _n\colon\hspace{0.15cm} x_n\in X^j_n, \hspace{0.15cm}T(x_{n+1})=x_n ,\hspace{0.15cm} n\geq0\right\rbrace.\] 
	and the projections $ \pi^j_k\colon \widetilde{X^j}\to X_k ^j$ are measure-preserving. The space of backward orbits \[\widetilde{X}=\left\lbrace \left\lbrace x_n\right\rbrace _n\colon\hspace{0.15cm} x_n\in X, \hspace{0.15cm}T_n(x_{n+1})=x_n ,\hspace{0.15cm} n\geq0\right\rbrace\] 
 is the disjoint union of the $  \widetilde{X^j}$, $ j\geq 0 $. Let $ \mathcal{A} $ to be the $ \sigma $-algebra generated by $ \left\lbrace \widetilde{\mathcal{A}^j}\right\rbrace _j $, and the measure $ \widetilde{\mu} $ on $ (\widetilde{X}, \widetilde{A}) $ unambiguously determined by the $ \widetilde{\mu^j }$'s. It is clear that the maps $ \pi_k $ preserve the measure $ \widetilde \mu $, for all $ k\geq 0 $.
 	
 	It is left to see that $ \widetilde{T} $ is measure-preserving. To do so, note that we have the following measure-preserving commutative diagram.
 	\[\begin{tikzcd}[row sep=-0.2em,/tikz/row 3/.style={row sep=4em}]
 		...\arrow{r}{\widetilde{T}} &	\widetilde{X^{j_2}}\subset \widetilde{X} \arrow{rr}{\widetilde{T}} & &		\widetilde{X^{j_1}}\subset \widetilde{X}\arrow{rr}{\widetilde{T}} &&\widetilde{X^{j_0}}\subset \widetilde{X} \\ 	 
 		&	{\scriptstyle \left\lbrace x_{n+2}\right\rbrace _n} \arrow{ddddddd}{\pi_{0}^{j_2}}& &	{\scriptstyle \left\lbrace x_{n+1}\right\rbrace _n }\arrow{ddddddd}{\pi_{0}^{j_1}} & &	{\scriptstyle \left\lbrace x_{n}\right\rbrace _n }\arrow{ddddddd}{\pi_{0}^{j_0}}\\ \\ \\	\\ \\ \\ \\...\arrow{r}{T} & X^{j_2}_{2}\subset X \arrow{rr}{T}   && X^{j_1}_1\subset X \arrow{rr}{T} &&X^{j_0}_0\subset X \\ &{\scriptstyle x_2}&&{\scriptstyle x_1} &&{\scriptstyle x_0}
 	\end{tikzcd}
 	\]
 Since the sets $ \left\lbrace (\pi_n^j)^{-1}(A\cap X^j_n)\colon A\in \mathcal{A}\right\rbrace _{n,j} $ generate the $ \sigma $-algebra $ \widetilde{A} $,  it is enough to prove invariance for such sets. Thus, without loss of generality, let $ A\subset  X^j_n$, and then \begin{align*}
 	\widetilde{\mu} \circ \widetilde{T}^{-1}( (\pi_n^j)^{-1}(A))&= \widetilde{\mu} \circ (\pi_n^j\circ \widetilde{T})^{-1}(A)= \widetilde{\mu} \circ (T\circ \pi_n^{j})^{-1}(A)= \widetilde{\mu} \circ(\pi_n^{j})^{-1}\circ T^{-1}(A)\\&=\mu (T^{-1}(A))= \mu (A)= \widetilde{ \mu} ( (\pi_n^j)^{-1}(A)),
 \end{align*}
 as desired.

\end{proof}
It follows from the previous theorem that $ \widetilde{\mu} $ is a probability measure if and only if  $ \mu $ is also. Natural extensions share many ergodic properties with the original map, as shown in the following proposition for probability spaces.  

\begin{prop}{\bf (Ergodic properties of Rokhlin's natural extension)}\label{prop-ergodic-properties-natural-ext}
	Let $ (X,\mathcal{A},\mu) $ be a Lebesgue probability space, endowed with a measure-preserving transformation $ T\colon X \to X $, and consider its Rokhlin's natural extension $ \widetilde{T} $ acting on $ (\widetilde{X},\widetilde{\mathcal{A}},\widetilde{\mu}) $, given by Theorem \ref{thm-natural-extension}. Then, the following holds.
	\begin{enumerate}[label={\em (\alph*)}]
		\item 	$ \widetilde{T} $ is recurrent with respect to $ \widetilde{\mu} $.
		\item 	$ \widetilde{T} $ is ergodic with respect to $ \widetilde{\mu} $ if and only if $ {T} $ is ergodic with respect to $ {\mu} $.
	\end{enumerate}

\end{prop}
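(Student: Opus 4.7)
The plan is to separate the two parts of the statement and to exploit throughout three elementary identities between the projections $\pi_n$ and the shift $\widetilde{T}$: the semiconjugacy $T\circ\pi_0=\pi_0\circ\widetilde{T}$, the compatibility relation $\pi_{n-1}=T\circ\pi_n$ built into the definition of $\widetilde{X}$, and, by iteration, $\pi_n\circ\widetilde{T}^n=\pi_0$. For part (a), Theorem \ref{thm-natural-extension} (applied in the probability case) tells us $\widetilde{T}$ is a measure-preserving automorphism of the probability space $(\widetilde{X},\widetilde{\mathcal{A}},\widetilde{\mu})$, so recurrence of $\widetilde{T}$ follows immediately from the Poincar\'e Recurrence Theorem \ref{thm-poincare-recurrence}.

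For the forward direction of part (b), suppose $\widetilde{T}$ is ergodic and let $A\in\mathcal{A}$ satisfy $T^{-1}(A)=A$. Using $T\circ\pi_0=\pi_0\circ\widetilde{T}$, we get $\widetilde{T}^{-1}(\pi_0^{-1}(A))=\pi_0^{-1}(T^{-1}(A))=\pi_0^{-1}(A)$, so $\pi_0^{-1}(A)$ is $\widetilde{T}$-invariant. Since $\widetilde{\mu}(\pi_0^{-1}(A))=\mu(A)$ by Theorem \ref{thm-natural-extension}, the ergodicity of $\widetilde{T}$ forces $\mu(A)\in\{0,1\}$, and $T$ is ergodic.

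The converse requires an approximation argument. First I would verify that the $\sigma$-algebras $\widetilde{\mathcal{A}}_n\coloneqq\pi_n^{-1}(\mathcal{A})$ form an increasing chain: the relation $\pi_{n-1}=T\circ\pi_n$ yields $\pi_{n-1}^{-1}(B)=\pi_n^{-1}(T^{-1}(B))$, so $\widetilde{\mathcal{A}}_{n-1}\subset\widetilde{\mathcal{A}}_n$. As $\widetilde{\mathcal{A}}$ is by construction the smallest $\sigma$-algebra making every $\pi_n$ measurable, $\bigcup_n\widetilde{\mathcal{A}}_n$ generates $\widetilde{\mathcal{A}}$ as an algebra, and standard measure-theoretic approximation in a finite measure space gives: for every $\widetilde{B}\in\widetilde{\mathcal{A}}$ and every $\varepsilon>0$ there exist $n\geq 0$ and $A_n\in\mathcal{A}$ with $\widetilde{\mu}(\widetilde{B}\triangle\pi_n^{-1}(A_n))<\varepsilon$.

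Now assume $T$ is ergodic and $\widetilde{T}^{-1}(\widetilde{B})=\widetilde{B}$. Applying the measure-preserving automorphism $\widetilde{T}^{-n}$ to the approximation above and invoking $\pi_n\circ\widetilde{T}^n=\pi_0$ yields $\widetilde{\mu}(\widetilde{B}\triangle\pi_0^{-1}(A_n))<\varepsilon$; hence $\widetilde{B}$ lies in the $\widetilde{\mu}$-completion of $\pi_0^{-1}(\mathcal{A})$, so there exists $A\in\mathcal{A}$ with $\widetilde{B}=\pi_0^{-1}(A)$ modulo a $\widetilde{\mu}$-null set. The invariance $\widetilde{T}^{-1}(\widetilde{B})=\widetilde{B}$ translates, using $\widetilde{T}^{-1}\circ\pi_0^{-1}=\pi_1^{-1}$ and $\pi_0^{-1}(A)=\pi_1^{-1}(T^{-1}(A))$, into $\pi_1^{-1}(A)=\pi_1^{-1}(T^{-1}(A))$ modulo null; since $\pi_1$ is measure-preserving this gives $\mu(A\triangle T^{-1}(A))=0$, and ergodicity of $T$ forces $\mu(A)\in\{0,1\}$, whence $\widetilde{\mu}(\widetilde{B})\in\{0,1\}$. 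The main obstacle I anticipate is the bookkeeping in this last step, namely making sure that the ``modulo null'' reductions compose correctly and that the identification $\widetilde{B}=\pi_0^{-1}(A)$ mod null really is legitimate; no new machinery beyond the three commutation relations and the standard approximation lemma will be needed.
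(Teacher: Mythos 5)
Your proof of part (a) matches the paper's: apply the Poincar\'e Recurrence Theorem to the probability-preserving automorphism $\widetilde{T}$. For part (b), however, the paper simply cites \cite[Thm.~8.4.3]{UrbanskiRoyMunday1}, whereas you supply a full self-contained argument. The route you take is the standard one: the forward implication via the invariant set $\pi_0^{-1}(A)$ is straightforward, and the converse uses the increasing filtration $\widetilde{\mathcal{A}}_n = \pi_n^{-1}(\mathcal{A})$ (increasing because $\pi_{n-1}^{-1}(B)=\pi_n^{-1}(T^{-1}(B))$), the algebra-approximation lemma to write a $\widetilde{T}$-invariant $\widetilde{B}$ as $\pi_n^{-1}(A_n)$ up to $\varepsilon$, and the identity $\pi_n\circ\widetilde{T}^n=\pi_0$ together with $\widetilde{T}$-invariance of $\widetilde{B}$ to pull these approximants down to level $0$. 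This is correct and fills a detail the paper outsources, which is useful content.

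One small slip in the last step: you write $\widetilde{T}^{-1}\circ\pi_0^{-1}=\pi_1^{-1}$, but the correct relation (from $\pi_1\circ\widetilde{T}=\pi_0$) is $\widetilde{T}^{-1}\circ\pi_1^{-1}=\pi_0^{-1}$, equivalently $\widetilde{T}\circ\pi_0^{-1}=\pi_1^{-1}$. In fact you can avoid the detour through $\pi_1$ altogether: from $\pi_0\circ\widetilde{T}=T\circ\pi_0$ one gets directly $\widetilde{T}^{-1}(\pi_0^{-1}(A))=\pi_0^{-1}(T^{-1}(A))$, so $\widetilde{T}$-invariance of $\widetilde{B}=\pi_0^{-1}(A)$ (mod null) gives $\pi_0^{-1}(A)=\pi_0^{-1}(T^{-1}(A))$ mod null, and since $\pi_0$ is measure-preserving, $\mu(A\triangle T^{-1}(A))=\widetilde{\mu}\bigl(\pi_0^{-1}(A\triangle T^{-1}(A))\bigr)=0$. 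With that minor correction the argument is sound.
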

\begin{proof} Since $ \mu $ is assumed to be a probability measure,  $ \widetilde{\mu} $ is also a probability measure, and the recurrence of $ \widetilde{T} $ follows from Poincaré Recurrence Theorem, since $ \widetilde T $ is measure-preserving. For (b), see {\normalfont \cite[Thm. 8.4.3]{UrbanskiRoyMunday1}}.
\end{proof}

Under the assumption of ergodicity and recurrence, we can prove that every subset of positive measure in the phase space is visited by almost every backward orbit.
\begin{corol}{\bf (Almost every backward orbit is dense)}\label{prop-ergodic-properties-natural-ext2}
		Let $ (X,\mathcal{A},\mu) $ be a Lebesgue space, endowed with a measure-preserving transformation $ T\colon X \to X $, and consider its Rokhlin's natural extension $ \widetilde{T} $ acting in $ (\widetilde{X},\widetilde{\mathcal{A}},\widetilde{\mu}) $, given by Theorem \ref{thm-natural-extension}. Assume $ \widetilde{T} $  is ergodic and recurrent with respect to $ \widetilde{\mu} $,  and  $ A\subset X $ is a measurable set  with $ \mu(A)>0 $. Then, for $ \widetilde{\mu} $-almost every $ \left\lbrace x_n\right\rbrace _n\in \widetilde{X }$, there exists a sequence $ n_k\to \infty $ such that $ x_{n_k}\in A $ for all $k\in\mathbb{N}$.   
\end{corol}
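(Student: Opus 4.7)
The plan is to recast the conclusion as a recurrence-type statement for $\widetilde{T}^{-1}$ on $\widetilde{X}$ and then to invoke Theorem \ref{thm-almost-every-orbit-dense}. Set $\widetilde{A}\coloneqq \pi_0^{-1}(A)\subset\widetilde{X}$; since $\pi_0$ is measure-preserving by Theorem \ref{thm-natural-extension}, one has $\widetilde{\mu}(\widetilde{A})=\mu(A)>0$. Using the shift description of $\widetilde{T}^{-1}$ we have $\widetilde{T}^{-n}(\{x_k\}_k)=\{x_{k+n}\}_k$, and hence $\pi_0(\widetilde{T}^{-n}(\widetilde{x}))=x_n$. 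Therefore the statement ``$x_n\in A$ for infinitely many $n$'' is equivalent to ``$\widetilde{T}^{-n}(\widetilde{x})\in\widetilde{A}$ for infinitely many $n$'', which is precisely item \emph{(b)} of Theorem \ref{thm-almost-every-orbit-dense} applied to the transformation $\widetilde{T}^{-1}$ and the set $\widetilde{A}$.

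It thus suffices to verify that $\widetilde{T}^{-1}$ is non-singular, ergodic and recurrent with respect to $\widetilde{\mu}$. Non-singularity is immediate since $\widetilde{T}^{-1}$ is an invertible $\widetilde{\mu}$-preserving transformation (Theorem \ref{thm-natural-extension}). Ergodicity of $\widetilde{T}^{-1}$ follows from that of $\widetilde{T}$: a measurable set $E$ satisfies $\widetilde{T}^{-1}(E)=E$ if and only if $\widetilde{T}(E)=E$, so the two transformations share the same family of invariant sets.

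The core of the argument is the transfer of recurrence from $\widetilde{T}$ to $\widetilde{T}^{-1}$. Fix $B\in\widetilde{\mathcal{A}}$ with $\widetilde{\mu}(B)>0$ and define the ``first escape'' set
\[W\coloneqq \{x\in B : \widetilde{T}^{-n}(x)\notin B \text{ for every } n\geq 1\}.\]
If $\widetilde{\mu}(W)>0$, recurrence of $\widetilde{T}$ applied to $W$ provides, for almost every $x\in W$, some $n\geq 1$ with $y\coloneqq\widetilde{T}^n(x)\in W\subset B$; but by definition of $W$, $\widetilde{T}^{-n}(y)=x\notin B$, contradicting $x\in W\subset B$. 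Hence $\widetilde{\mu}(W)=0$. To upgrade this single backward return to infinitely many, note that if $x\in B$ satisfies $\widetilde{T}^{-n}(x)\in B$ for only finitely many $n\geq 1$, letting $N\geq 0$ be the largest such index (or $N=0$ if none exists) gives $\widetilde{T}^{-N}(x)\in W$. Hence the set of such $x$ lies in $\bigcup_{k\geq 0}\widetilde{T}^k(W)$, a countable union of $\widetilde{\mu}$-null sets since $\widetilde{T}$ is a measure-preserving automorphism.

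The main obstacle is precisely this last passage from recurrence of $\widetilde{T}$ to recurrence of $\widetilde{T}^{-1}$: a soft ``no wandering set'' argument, but one that needs care since $\widetilde{\mu}$ is only $\sigma$-finite, so one cannot appeal to Poincar\'e recurrence for $\widetilde{T}^{-1}$ directly. Everything else is a routine translation between the shift dynamics on $\widetilde{X}$ and the original dynamics on $X$.
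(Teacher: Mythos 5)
Your proof is correct and follows the same skeleton as the paper's: set $\widetilde{A}=\pi_0^{-1}(A)$, apply Theorem~\ref{thm-almost-every-orbit-dense} to the inverse transformation $\widetilde{T}^{-1}$, and read off the conclusion from the identity $\pi_0\circ\widetilde{T}^{-n}=\pi_n$. Where you go further than the paper is in explicitly verifying the hypotheses of Theorem~\ref{thm-almost-every-orbit-dense} for $\widetilde{T}^{-1}$. The paper's proof applies the theorem and produces $\widetilde{T}^{-n_k}$ without commenting on why $\widetilde{T}^{-1}$ is ergodic and recurrent; ergodicity is noted in a remark after Definition~\ref{def-ergodic-properties}, but the transfer of \emph{recurrence} from $\widetilde{T}$ to $\widetilde{T}^{-1}$ is left unstated. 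You correctly flag that this is precisely the point that requires care: since $\widetilde{\mu}$ may only be $\sigma$-finite, one cannot shortcut the issue by invoking Poincar\'e recurrence for $\widetilde{T}^{-1}$. Your wandering-set argument does the job cleanly --- the set $W\subset B$ of points with no backward return must be null (else its forward $\widetilde{T}$-recurrence produces a point $y\in W$ whose backward image $\widetilde{T}^{-n}(y)=x$ lies in $B$, contradicting $y\in W$), and covering the ``finitely many backward returns'' set by $\bigcup_{k\geq 0}\widetilde{T}^k(W)$ upgrades a single backward return to infinitely many. This is a genuine improvement in rigor over the two-line proof given in the paper.
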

\begin{proof}
	Since $ \widetilde{T} $  is ergodic and recurrent with respect to $ \widetilde{\mu} $,  by Theorem \ref{thm-almost-every-orbit-dense}, for every  $ \widetilde{A}\in\widetilde{\mathcal{A}} $ with $ \widetilde{\mu} (\widetilde{A})>0 $ and $ \widetilde{\mu} $-almost every $ \left\lbrace x_n\right\rbrace _n\in \widetilde{X }$, there exists a sequence $ n_k\to \infty $ such that $ \widetilde{T}^{-n_k}(\left\lbrace x_n\right\rbrace _n)\in \widetilde{A} $. Taking $ \widetilde{A} $ to be $ \pi_0^{-1}(A) $,  we have that $ \widetilde{\mu} (\widetilde{A})>0 $,  so for $ \widetilde{\mu} $-almost every $ \left\lbrace x_n\right\rbrace _n\in \widetilde{X }$, there exists a sequence $ n_k\to \infty $ with $$ \widetilde{T}^{-n_k}(\left\lbrace x_n\right\rbrace _{n\geq 0})= \left\lbrace x_n\right\rbrace _{n\geq n_k} \in \pi_0^{-1}(A)  .$$ Hence, $ x_{n_k}\in A $, as desired.
\end{proof}
\subsection{Harmonic measure}\label{subsect_harmonic_measure}
Let $ U \subset\widehat{\mathbb{C}}$ be a hyperbolic simply connected domain (i.e. $ U $ omits at least three points), and let 
 $ \varphi\colon\mathbb{D}\to U $ be a Riemann map. We are concerned with the extension of $ \varphi $ to the unit circle $ \partial\mathbb{D} $ given in terms of radial limits \[\varphi^*(\xi)\coloneqq \lim\limits_{t\to 1^-}\varphi(t\xi) ,\] which exist $ \lambda $-almost everywhere. The radial extension of its Riemann map  $\varphi^*\colon \partial \mathbb{D}\to \widehat{\partial} U$  is used to define a measure in $ \widehat{\partial} U $, the {\em harmonic measure},  in terms of the push-forward of the normalized Lebesgue measure on the unit circle $ \partial\mathbb{D} $.

\begin{defi}{\bf (Harmonic measure)}
	Let $ U\subset\widehat{\mathbb{C}} $ be a hyperbolic simply connected domain, $ z\in U $, and let $ \varphi\colon\mathbb{D}\to U $ be a Riemann map, such that $ \varphi(0)=z\in U $. Let $ (\partial\mathbb{D}, \mathcal{B}, \lambda) $ be the measure space on $ \partial \mathbb{D} $ defined by $ \mathcal{B} $, the Borel $ \sigma $-algebra of $ \partial \mathbb{D} $, and $ \lambda $, its normalized Lebesgue measure. Consider the measurable space $ (\widehat{\mathbb{C}}, \mathcal{B}(\widehat{\mathbb{C}})) $, where  $ \mathcal{B}(\widehat{\mathbb{C}}) $ is the Borel $ \sigma $-algebra of $ \widehat{\mathbb{C}} $. Then, given $ B\in  \mathcal{B}(\widehat{\mathbb{C}}) $, the \textit{harmonic measure at $ z $ relative to $ U $} of the set $ B$ is defined as\[\omega_U(z, B)\coloneqq\lambda ((\varphi^*)^{-1}(B)).\]
\end{defi}

We refer to \cite{harmonicmeasure2, Pommerenke} for equivalent definitions and further properties of the harmonic measure.

Let $ U\subset\widehat{\mathbb{C}} $ be a hyperbolic simply connected domain. We need the following simple facts.
\begin{itemize}
	\item 	Let $ B\in\mathcal{B}(\widehat{\mathbb{C}}) $.  If there exists $ z_0\in U $ such that $ \omega_U(z_0, B)=0 $ (resp. $ \omega_U(z_0, B)=1 $), then $ \omega_U(z, B)=0 $ (resp. $ \omega_U(z, B)=1 $) for all $ z\in U $.
	In this case, we say that the set $ B $ has {\em zero} (resp. {\em full}) {\em harmonic measure relative to $ U $}, and we write $ \omega_U(B)=0 $ (resp. $ \omega_U(B)=1 $).
	
	\item 	$\textrm{supp }\omega_U =\widehat{\partial }U$.
	That is, for all $ x\in\widehat{\partial} U $ and $ r>0 $, $ \omega_U(D(x,r))>0 $. 
	
	\item Harmonic measure is invariant under Möbius transformations. That is, if  consider the harmonic measure $ \omega_U(z, \cdot) $ and
	$ M $ is a Möbius transformation, $ M(U) $ is a hyperbolic simply connected domain, and, for all $ B\in\mathcal {B}(\widehat{\mathbb{C}})$ it holds
	\[\omega_U(z, B)=\omega_{M(U)}(M(z), M(B)).\]
\end{itemize}

\subsection{Inner functions. Inverse branches and distortion}
An {\em inner function} is, by definition, a holomorphic self-map of the unit disk, $ g\colon\mathbb{D}\to\mathbb{D} $, which preserves the unit circle $ \lambda $-almost everywhere in the sense of radial limits, i.e. \[g^*(\xi)\coloneqq \lim\limits_{t\to 1^-}g(t\xi) \in\partial\mathbb{D},\] for $ \lambda $-almost every $ \xi\in\partial\mathbb{D} $. If $ g $ is not conjugate to a rotation, it is well-known that iterates in $ \mathbb{D} $ converge locally uniformly to a distinguished point $ p\in\overline{\mathbb{D}} $, the {\em Denjoy-Wolff point}.

We need precise estimates on the distortion of the radial segment in terms of Stolz angles under inverse branches of an inner function (we refer to \cite{Jove} for a wider explanation).

\subsubsection*{Generalized radial arcs and Stolz angles}
Denote the \textit{radial segment} at $ \xi $ of length $ \rho>0 $ by $R_{\rho}(\xi)\coloneqq \left\lbrace r\xi\colon r\in(1-\rho,1)\right\rbrace$,
and  the \textit{Stolz angle} at $ \xi $ of length $ \rho>0 $ and opening $ \alpha\in (0, \frac{\pi}{2}) $ by 	\[\Delta_{\alpha, \rho}(\xi)=\left\lbrace z\in\mathbb{D}\colon \left| \textrm{Arg }\xi- \textrm{Arg }(\xi-z)\right| <\alpha, \left| z \right| >1-\rho \right\rbrace .\] 
A more flexible notion of radial segment and Stolz angle will be needed for our purposes.

\begin{defi}{\bf (Generalized radial arc and  Stolz angle) } \label{defi-radi-stolz-angle} Let $ p\in\overline{\mathbb{D}} $ and let $ \xi\in\partial\mathbb{D} $, $ \xi\neq p $. Let $ \rho>0 $ and $ \alpha\in (0, \pi/2 )$. \begin{itemize}
		\item If $ p\in \mathbb{D}$, consider the Möbius transformation $ M\colon\mathbb{D}\to\mathbb{D} $, $ M(z)=\dfrac{p-z}{1-\overline{p}z} $. Then,  the {\em (generalized) radial segment} $ R_{\rho}(\xi,p) $ of length $ \rho $ at $ \xi $ is defined as the preimage under $ M $ of the radial segment $ R_\rho(M(\xi)) $. Analogously, the {\em (generalized) Stolz angle} $ \Delta_{\alpha, \rho}(\xi, p) $ of angle $ \alpha $ and length $ \rho $  is  the preimage under $ M $ of the Stolz angle $ \Delta_{\alpha, \rho}(M(\xi)) $.
		That is, \[ R_{\rho}(\xi, p)\coloneqq M^{-1}(R_{\rho}(M(\xi))),\]\[ \Delta_{\alpha, \rho}(\xi, p)\coloneqq M^{-1}(\Delta_{\alpha,\rho}(M(\xi))).\]
		
		\item 	If $ p\in\partial\mathbb{D} $, consider the Möbius transformation $ M\colon\mathbb{D}\to\mathbb{H} $, $ M(z)=i\dfrac{p+z}{p-z} $. Then, the {\em (generalized) radial segment} and {\em Stolz angle} at $ \xi $ are defined as the preimages of the corresponding radial segment and Stolz angle at $ M(\xi) \in\mathbb{R}$. That is, \[ R_{\rho}(\xi,p)\coloneqq M^{-1}(R_{\rho}^\mathbb{H}(M(\xi)))\] \[ \Delta_{\alpha, \rho}(\xi,p)\coloneqq M^{-1}(\Delta_{\alpha,\rho}^\mathbb{H}(M(\xi))).\]
	\end{itemize}
\end{defi}

\begin{remark} See \cite[Sect. 2.3]{Jove} for details. In particular, the Lehto-Virtanen Theorem \cite[Sect. 4.1]{Pommerenke} justifies that, for meromorphic maps omitting three values, it is equivalent to take the limit along the radial segment, than along any generalized radial segment. Likewise, the angular limit can be computed along generalized Stolz angles. 
\end{remark}
\subsubsection*{Inverse branches for inner functions and distortion}
Given an inner function $ g\colon\mathbb{D}\to\mathbb{D} $, consider it as its maximal meromorphic extension \[g\colon \widehat{\mathbb{C}}\smallsetminus E(g)\to \widehat{\mathbb{C}}.\]The set of singular values of an inner function is determined by those in $ \mathbb{D} $, and the distortion at regular points is controlled, as shown in Proposition \ref{prop-radial-limits}.

\begin{prop}{\bf (Inverse branches of inner functions, {\normalfont\cite[Prop. 4.5]{Jove}})}\label{prop-radial-limits}
	Let $ g\colon\mathbb{D}\to\mathbb{D} $ be an inner function, with Denjoy-Wolff point $ p\in\overline{\mathbb{D}} $. Let $ \xi\in\partial\mathbb{D} $, $ \xi\neq p $.
	Assume there exists a a crosscut $ C $, with crosscut neighbourhood $ N_C $ and $ \xi\in \partial N_C $ such that $ SV(g)\cap N_C =\emptyset$. Then, there exists $ \rho_0>0$ such that $ D(\xi, \rho_0)\cap SV(g)\neq 0 $.  Moreover, for all $ 0<\alpha< \frac{\pi}{2}$, there exists $ \rho_1\coloneqq \rho_1(\alpha, \rho_0)<\rho_0 $ such that all branches $ G_1 $ of $ g^{-1} $ are well-defined in $ D(\xi, \rho_1) $ and, for all $ \rho<\rho_1 $,\[G_1(R_\rho(\xi, p))\subset \Delta_{\alpha,\rho}(G_1(\xi), p),\] where $ R_\rho(\cdot, p) $ and $ \Delta_{\alpha,\rho}(\cdot, p) $ stand for the generalized radial segment and Stolz angle with respect to $ p $ {\em (Def. \ref{defi-radi-stolz-angle})}. 
\end{prop}

\section{On the setting and the hypotheses}\label{sect3-setting}
In this section we shall discuss the setting we are working with (that is, iteration of functions of class $ \mathbb{K} $, and the dynamics on the boundary of Fatou components), and the hypothesis we use in our version of Pesin theory, that is  $ \int_{\partial U} \log\left| x-SV\right|^{-1} d\omega_U(x)<\infty  $. 
\subsection{Fatou components of functions of class $ \mathbb{K} $}\label{subsect-components-Fatou-K}

As mentioned in the introduction, consider $ f\in\mathbb{K} $, i.e. \[f\colon \widehat{\mathbb{C}}\smallsetminus E(f)\to \widehat{\mathbb{C}},\] where $ \Omega(f)\coloneqq\widehat{\mathbb{C}}\smallsetminus E(f)  $ is the largest set where $ f $ is meromorphic and $ E(f) $ is the set of singularities of $ f $, which is assumed to be closed and countable. Note that $ \Omega(f)$ is open. 

\begin{notation*}
	Once a function $ f\in\mathbb{K} $ is fixed, we denote $ \Omega(f) $ and $ E(f) $ simply by $ \Omega $ and $ E $, respectively. Given a domain $ U\subset\Omega $, we denote by $ \partial U $ the boundary of $ U $ in $ \Omega $, and we keep the notation $ \widehat{\partial} U $ for the boundary with respect to $ \widehat{\mathbb{C}} $.
\end{notation*}

The dynamics of functions in class $ \mathbb{K} $ was introductedin \cite{Bolsch-thesis, BakerDominguezHerring}. The Fatou set $ \mathcal{F}(f) $ is defined as the largest open set in which $ \left\lbrace f^n\right\rbrace _n $ is well-defined and normal, and the Julia set $ \mathcal{J}(f) $, as its complement in $ \widehat{\mathbb{C}} $. The standard theory of Fatou and Julia for rational or entire functions extends successfully to this more general setting. We shall need the following properties.

\begin{thm}{\bf (Properties of Fatou and Julia sets, {\normalfont \cite[Thm. A]{BakerDominguezHerring}})}
	Let $ f\in\mathbb{K} $. Then,
	\begin{enumerate}[label={\em (\alph*)}]
		\item\label{Fatou1} $ \mathcal{F}(f) $ is completely invariant in the sense that $ z\in\mathcal{F}(f) $ if and only if $ f(z)\in \mathcal{F}(f) $;
		\item\label{Fatou2} for every positive integer $ k $, $ f^k\in\mathbb{K} $, $ \mathcal{F}(f^k)=\mathcal{F}(f) $ and $ \mathcal{J}(f^k)=\mathcal{J}(f) $.
	\end{enumerate}
\end{thm}

By {\em\ref{Fatou1}}, Fatou components (i.e. connected components of $ \mathcal{F}(f) $) are mapped among themselves, and hence classified into periodic, preperiodic or wandering. By {\em\ref{Fatou2}}, the study of periodic Fatou components reduces to the invariant ones, i.e. those for which $ f(U)\subset U $.

In this paper, we focus on simply connected periodic Fatou components, which we assume to be invariant. 
Those Fatou components are classified into attracting basins, parabolic basins, Siegel disks,  and Baker domains \cite[Thm. C]{BakerDominguezHerring}. A \textit{Baker domain} is, by definition, a periodic Fatou component $ U $ of period $ k\geq 1 $ for which there exists $ z_0\in\widehat{\partial} U$ such that $ f^{nk}(z)\to z_0 $, for all $ z\in U $ as $ n\to\infty $, but $ f^k $ is not meromorphic at $ z_0 $. In such case, $ z_0 $ is accessible from $ U $  \cite[658]{BakerDominguezHerring}. Baker domains are classified according to its internal dynamics in doubly parabolic, hyperbolic and simply parabolic (see e.g. \cite{fh}).

\subsection*{Ergodic properties of the boundary map $ f\colon \partial U\to \partial U $}\label{subsection-ergodic-f}

To prove Theorems \ref{teo:A} and \ref{teo:B} we shall need the following ergodic results about the boundary map $ f\colon \partial U\to \partial U $. More precisely, let $ f\in\mathbb{K} $, and let $ U $ be an invariant Fatou component for $ f $, which we assume to be simply connected. Consider $ \varphi\colon\mathbb{D}\to U $ to be a Riemann map. Then,  $ f\colon U\to U $ is conjugate by $ \varphi $ to a holomorphic map $ g\colon\mathbb{D}\to\mathbb{D} $, i.e. the diagram
\[\begin{tikzcd}[row sep=large, column sep=large]
	U \arrow{r}{f} & U \\	
	\mathbb{D} \arrow{r}{g} \arrow{u}{\varphi} & \mathbb{D}\arrow[swap]{u}{\varphi}
\end{tikzcd}
\]
commutes.  
It is well-known that  $ g $ is an inner function (see e.g. \cite[Prop. 5.6]{Jove}).
We say that $ g $ is an {\em inner function associated to $ (f,U) $}. 
 
Ergodic properties of $ g^* |_{\partial\mathbb{D}}$, as well as their extension to $ f|_{\partial U} $,  have been widely studied \cite{Aaronson, DoeringMane1991, bfjk-escaping, Jove}. The following theorem summarizes these well-known results (see also \cite[Thm. 5.7]{Jove} and  references therein).

\begin{thm}{\bf (Ergodic properties of the boundary map)}\label{thm-ergodic-boundary-map}
	Let $ f\in\mathbb{K}$, and let $ U $ be an invariant simply connected Fatou component for $ f $. Let  $ g $ be an inner function associated to $ (f, U) $. Then, the following are satisfied. \begin{enumerate}[label={\em (\roman*)}]
		\item If $ U $ is either an attracting basin, a parabolic basin, or a Siegel disk, then $ g^*|_{\partial\mathbb{D}} $ is ergodic and recurrent with respect to the Lebesgue measure $ \lambda$. 
		\item If $ U $ is a doubly parabolic Baker domain, $ g^*|_{\partial \mathbb{D}} $ is ergodic with respect to $ \lambda$. If the Denjoy-Wolff point of $ g $ is not a singularity, $ g^*|_{\partial \mathbb{D}} $ is recurrent.
		\item If $ SV\cap U $ are compactly contained in $ U$, the Denjoy-Wolff point of $g$ is not a singularity.
		
		\item If $ g^*|_{\partial \mathbb{D}} $ is ergodic (resp. recurrent) with respect to $ \lambda $,  so is $ f|_{{\partial} U} $ with respect to $ \omega_U $. 
		
		
		\item  Let $ k $ be a positive integer. Then, the inner function associated to $ (f,U) $ has the same ergodic properties than the inner function associated to $ (f^k,U) $.
	\end{enumerate}
\end{thm}
The following result concern the existence of invariant measures for $ f|_{\partial U} $.

\begin{thm}{\bf (Invariant measures for $ f|_{\partial U} $)}\label{corol-invariant-measures}
	Let $ f\in\mathbb{K}$, and let $ U $ be an invariant simply connected Fatou component for $ f $. 
	\begin{enumerate}[label={\em (\roman*)}]
		\item If $ U $ is an attracting basin or a Siegel disk with fixed point $ p\in U $, the harmonic measure $ \omega_U(p,\cdot) $ is $ f $-invariant.
		\item If $ U $ is a parabolic basin or a doubly-parabolic Baker domain, with convergence point $ p\in\widehat{\partial}U $. Then,  the push-forward $ \mu\coloneqq (\varphi^*)_*\lambda_\mathbb{R} $ of the measure \[\lambda_\mathbb{R} (A)=\int_A \frac{1}{\left| w-1\right| ^2}d\lambda(w), \hspace{0.5cm}A\in \mathcal{B}(\partial\mathbb{D}),\] under a Riemann map  $ \varphi\colon \mathbb{D}\to U$, $ \varphi^*(1)=p $, is $ f $-invariant. The support of $ \mu $ is $ \widehat{\partial}U $. 
	\end{enumerate}
\end{thm}

\subsection{On the condition $ \int_{\partial U} \log\left| x-SV\right|^{-1} d\omega_U(x)<\infty  $}\label{subsect-sparseSV}

We need the following lemma, which gives a geometric intuition for the condition $ \int_{\partial U} \log\left| x-SV\right|^{-1} d\omega_U(x)<\infty  $, by showing it is equivalent to singular values are `not too dense' on $ \widehat{\partial} U $ with respect to harmonic measure.
\begin{lemma}\label{lemma-suma-mesura-disks}
	Let $ f\in\mathbb{K} $, and $ U $ be a Fatou component. Then, $ \int_{\partial U} \log\left| x-SV\right|^{-1} d\omega_U(x)<\infty  $ if and only if for any $ C>0 $ and $ t\in (0,1) $, 	  \[\sum_{n\geq 0} \omega_U \left( \bigcup\limits_{s\in SV} D(s, C\cdot t^n)\right) <\infty.\] 
\end{lemma}
\begin{remark}\label{lemma-finitely-manySV}
	We note that $ \int_{\partial U} \log\left| x-SV\right|^{-1} d\omega_U(x)<\infty  $ always holds if $ SV\cap\widehat{\partial}U $ is finite. Indeed, given any simply connected domain $U$, for every $ a\in \mathbb{C} $, $  \log \left| z-a\right|  \in L^1(\omega_U)$  \cite[Prop. 21.1.18]{Conway}.
\end{remark}
\begin{proof}[Proof of Lemma \ref{lemma-suma-mesura-disks}]
	First note that, since we are working with the spherical metric, $ \log\left| x-SV\right|^{-1} $ is uniformly bounded above. Hence, one has only to examine the previous integral close to singular values. Let $ 0<t<1 $, and  \[A_n\coloneqq \left\lbrace z\in\mathbb{C}\colon t^{n+1}\leq \left| z-SV\right| <t^n  \right\rbrace; \hspace{0.3cm}D_n\coloneqq \bigcup\limits_{s\in SV} D(s, C\cdot t^n)=\left\lbrace z\in\mathbb{C}\colon  \left| z-SV\right| <t^n  \right\rbrace. \] Then, \[\int_{\partial U} \log\left| x-SV\right|^{-1} d\omega_U(x) \geq - \sum_n\log(t^{n})\cdot \omega_U(A_n)=-\log t\cdot \sum_n n\cdot \omega_U(A_n).\] This already implies that $ \sum_{n\geq 0} \omega_U (D_n)=\sum_n (n+1)\cdot  \omega_U(A_n)<\infty $, for every $ t\in (0,1) $. Since for every $ C>0 $ and $ 0<t<1 $ exists $ 0<s<1 $ with $ C\cdot t^n<s^n $ for $ n  $ sufficiently large, the claim of the lemma follows.
	
	For the converse, note that 
$ \sum_{n\geq 0} \omega_U (D_n)=\sum_n (n+1)\cdot  \omega_U(A_n)<\infty $,
	implying that 
	\[\infty>-\log t \cdot \sum_n n\cdot  \omega_U(A_n)=-\sum_n\log(t^{n})\cdot \omega_U(A_n)\geq \int_{\partial U} \log\left| x-SV\right|^{-1} d\omega_U(x), \] as desired.
\end{proof}

\begin{remark} Note that Lemma \ref{lemma-suma-mesura-disks} already implies that $ SV\cap \widehat{\partial} U $ has zero harmonic measure. 
\end{remark}

\section{Pesin theory for attracting basins. Theorem \ref{teo:A}}\label{section-pesinA}

In this section, we take on the main challenge of this paper: developing Pesin theory for a simply connected attracting basin $ U $ of a function of class $ \mathbb{K} $, or, in other words, proving that generic infinite inverse branches are well-defined on $ \partial U $.

The easiest assumption one can make to  show that  generic infinite inverse branches are well-defined in $ \partial U $, is that there exists $ x\in\partial U $ and $ r>0 $ so that $ D(x,r)\cap P(f) =\emptyset$. Indeed, 
in such case, all iterated inverse branches are well-defined in $ D(x,r) $. Moreover, since $ f|_{\partial U} $ is ergodic and recurrent, and $ D(x,r) $ has positive harmonic measure, it follows that the forward orbit of $ \omega_U$-almost every $ y\in\partial U $ eventually falls in $ D(x,r) $, so all iterated inverse branches are well-defined around $ y $.

The previous method has a main limitation: it does not work when $ \partial U\subset P(f) $. Even in the case where $ f $ is a polynomial, one can find examples for which $ \partial U\subset P(f) $, or even $ \mathcal{J}(f)\subset P(f) $. Our goal is precisely to show that, even in the case where $ \partial U\subset P(f) $, if $ \int_{\partial U} \log\left| x-SV\right|^{-1} d\omega_U(x)<\infty  $, generic inverse branches are well-defined. Hence, one should work with each infinite backward orbit separately, and try to find a disk where the inverse branches corresponding to this backward orbit are well-defined, but other inverse branches may fail to be defined. Here is where Rohklin's natural extension plays a crucial role.

Therefore, let $ U $ be a simply connected attracting basin for a map $ f\in\mathbb{K} $ with fixed point $ p\in U $, and consider the measure-theoretical dynamical system given by $ (\partial U,  \omega_U, f) $, where $ \omega_U $ is the harmonic measure with basepoint $ p $. Note that, through this section, $ \omega_U $ stands for the harmonic measure with basepoint $ p $, although we do not write it explicitly. Recall that $ \omega_U $ is $ f $-invariant, ergodic and recurrent. Note also that we omit the dependence of the previous dynamical system on the $ \sigma $-algebra $ \mathcal{B}(\widehat{\mathbb{C}}) $, in order to lighten the notation. 

Now, consider the  natural extension of $ (\partial U,  \omega_U, f) $, denoted by $ (\widetilde{\partial U},  \widetilde{\omega_U}, \widetilde{f}) $, and given by the projecting morphisms $ \left\lbrace \pi_{U,n}\right\rbrace _n $. We note that $ (\partial U,  \omega_U, f) $ is indeed a Lebesgue probability space (in fact, it is isomorphic, in the measure-theoretical sense, to the unit interval), and hence Theorem \ref{thm-natural-extension} can be applied to guarantee the existence of Rokhlin's natural extension. Thus, $ \widetilde{\partial U} $ is the space of backward orbits $ \left\lbrace x_n\right\rbrace _n\subset \partial U $, with $ f(x_{n+1})=x_n $ for $ n\geq 0 $, and $ \widetilde{f}\colon  \widetilde{\partial U}\to \widetilde{\partial U}$ is the automorphism which makes  the following diagram commute.
\[\begin{tikzcd}[row sep=-0.2em,/tikz/row 3/.style={row sep=4em}]
...\arrow{r}{\widetilde{f}} &	\widetilde{\partial U} \arrow{rr}{\widetilde{f}} & &	\widetilde{\partial U} \arrow{rr}{\widetilde{f}} & &	\widetilde{\partial U}\arrow{r}{\widetilde{f}} &...\\ 
&	{\scriptstyle \left\lbrace x_{n+2}\right\rbrace _n} \arrow{ddddddd}{\pi_{U,n}}& &{\scriptstyle \left\lbrace x_{n+1}\right\rbrace _n} \arrow{ddddddd}{\pi_{U,n}}& &	{\scriptstyle \left\lbrace x_{n}\right\rbrace _n }\arrow{ddddddd}{\pi_{U,n}} \\ \\ \\	\\ \\ \\ \\...\arrow{r}{f} &\partial U \arrow{rr}{f}  &&
\partial U \arrow{rr}{f}  && \partial U \arrow{r}{f} &...\\ &{\scriptstyle x_{n+2} }&&{\scriptstyle x_{n+1} }&&{\scriptstyle x_n}
\end{tikzcd}
\]

Since the natural extension inherits the ergodic properties of the original dynamical system, we have that $ \widetilde{\omega_U} $ is an $ \widetilde{f }$-invariant, ergodic and recurrent probability (Prop. \ref{prop-ergodic-properties-natural-ext}). Moreover, for every measurable set $ A\subset \partial U $ with $ \mu(A)>0 $ and $ \widetilde{\omega_U} $-almost every $ \left\lbrace x_n\right\rbrace _n\in \widetilde{\partial U}$, there exists a sequence $ n_k\to \infty $ such that $ x_{n_k}\in A $ for all $k\in\mathbb{N}$ (Corol. \ref{prop-ergodic-properties-natural-ext2}). 

We shall rephrase Theorem \ref{teo:A} in terms of Rokhlin's natural extension as follows.
\begin{thm}{\bf (Inverse branches are well-defined almost everywhere)}\label{thm-pesin}
	Let $ f\in \mathbb{K}$, and let $U$ be a simply connected attracting basin for $ f $, with  fixed point $ p\in U $. Let $ \omega_U $ be the harmonic measure in $ \partial U $ with base point $ p $. Assume:
	\begin{enumerate}[label={\em (\alph*)}]
		\item\label{thm-pesin-hyp-lyap} $ \log \left| f'\right| \in L^1(\omega_U) $, and $ \chi_{\omega_U}(f)>0$;
		\item \label{thm-pesin-hyp-SV}  $\sum_n \omega_U (D(x,M^n ))<\infty $, for every $ M\in (0,1) $.
	\end{enumerate}
	Then, for $ \widetilde{\omega_U} $-almost every $ \left\lbrace x_n\right\rbrace _n \in \widetilde{\partial U}$, there exists $ r\coloneqq r(\left\lbrace x_n\right\rbrace _n)>0 $ such that
		\begin{enumerate}[label={\em (\roman*)}]
		\item for all $n\geq 0$ the inverse branch $ F_n $ sending $x_0 $ to $ x_n $ is well-defined in $ D(x_0,r) $;
		\item  for every $ \chi\in (-\chi_{\omega_U}, 0) $, there exists $ C>0 $ such that, for all $ n\in\mathbb{N}$,  $ \left| F'_n(x_0)\right|<C \cdot e^{\chi \cdot n} $;
		\item  for every $ r_0\in (0, r)$, there exists $ m\in\mathbb{N} $ such that $ F_m(D(x_0, r))\subset D(x_0, r_0) $. 
	\end{enumerate}
\end{thm}

We show now how to deduce Theorem \ref{teo:A} from Theorem \ref{thm-pesin}, and then we give the proof of the latter theorem. 

\begin{proof}[Proof of Theorem \ref{teo:A}] The assumptions of Theorems \ref{teo:A} and \ref{thm-pesin} are equivalent, by Lemma \ref{lemma-suma-mesura-disks}. We have to show that the conclusions of \ref{teo:A} can be derived from the ones of Theorem \ref{thm-pesin}. But this follows immediately from Corollary \ref{prop-ergodic-properties-natural-ext2}. Indeed, since $ \widetilde{f} $  is ergodic and recurrent with respect to $ \widetilde{\omega_U} $,  for any  $ A\subset X $ measurable set  with $ \omega_U(A)>0 $, for $ \widetilde{\omega_U} $-almost every $ \left\lbrace x_n\right\rbrace _n\in \widetilde{X }$, there exists a sequence $ n_k\to \infty $ such that $ x_{n_k}\in A $.  It follows that, for every countable collection of measurable sets $ \left\lbrace A_k\right\rbrace _k\subset\partial U $ with $ \omega_U(A_k) >0$, then for $ \widetilde{\omega_U} $-almost every $ \left\lbrace x_n\right\rbrace _n\in \widetilde{X }$, there exists a sequence $ n_k\to \infty $ such that $ x_{n_k}\in A_k $. 
\end{proof}

\begin{remark}\label{rmk-pesin-also-for-inner}
	Before starting the proof let us note that we are assuming $ f\in\mathbb{K} $ just because it is a larger class of functions in which Fatou components are defined. We do not use the fact that functions in class $ \mathbb{K}  $ have only countably many singularities, we only use that singular values are `not too dense'  on $ \partial U $ (hypothesis {\em \ref{thm-pesin-hyp-SV}}). 
\end{remark}

The remainder of the section is devoted to prove Theorem \ref{thm-pesin}.

\subsection{Proof of Theorem \ref{thm-pesin}}

Recall that $ \omega_U $ is an $ f $-invariant ergodic probability in $ \partial U $. 		
We fix $ M\in (e^{\frac{1}{4}\cdot\chi}, 1) $.

				\begin{sublemma}{\bf (Almost every backward orbit does not come close to singular values)}
			For $ \widetilde{\omega_U} $-almost every $ \left\lbrace x_n\right\rbrace _n\in\widetilde{\partial U} $, we have \begin{enumerate}[label={\em (1.\arabic*)}]
			\item\label{lemma2a} $ x_0 \notin \bigcup\limits_{s\in SV} \bigcup\limits_{n\geq0}f^n(s)$,
			\item\label{lemma2b} $ \lim\limits_{n}\dfrac{1}{n}\log \left| (f^n)'(x_n)\right| =\chi_{\omega_U}(f)$,
						\item\label{lemma2c} if  $ D_n\coloneqq \bigcup\limits_{s\in SV} D(s,  M^n) $, then $ x_n\in D_n$ only for a finite number of $ n $'s.
			\end{enumerate}
		\end{sublemma}
\begin{proof} Since the finite intersection of sets of full measure has full measure, it is enough to show that each of the conditions is satisfied in a set of full measure.
	
Condition {\em\ref{lemma2a}} follows from $ \omega_U(SV(f))=0$. Indeed, $ f $ is holomorphic, and hence absolutely continuous, we have $\omega_U( \bigcup_{s\in SV(f)} \bigcup_{n\geq0}f^n(s)) =0 $.

Requirement {\em\ref{lemma2b}} follows from the Birkhoff Ergodic Theorem \ref{thm-birkhoff} applied to the map $ \log \left| f'\right| $, which is integrable by assumption {\em \ref{thm-pesin-hyp-lyap}}. Indeed, for $ \widetilde{\omega_U }$-almost every $ \left\lbrace x_n\right\rbrace  _n\in\widetilde{\partial U} $, we have
\begin{align*}
	\chi_{\omega_U} (f)&=\int_{\partial U} \log \left| f'\right| d\omega_U=\lim\limits_{m}\frac{1}{m}\sum_{k=0}^{m-1}\log \left| f'(f^k(x_0))\right|=
	\\
&=	\lim\limits_{m}\frac{1}{m}\sum_{k=0}^{n-1}\log \left| f'(f^k(\pi_{U, 0}(\left\lbrace x_n\right\rbrace _n)))\right| =\lim\limits_{m}\frac{1}{m}\sum_{k=0}^{m-1}\log \left| f'(\pi_{U, 0}(\widetilde{f}^k(\left\lbrace x_n\right\rbrace _n)))\right|,
\end{align*}  where in the last two equalities we used the properties of Rokhlin's natural extension.

Now, $ \widetilde{f} $ is a measure-preserving automorphism, and, since  $ \log\left| f'\right| \in L^1(\omega_U)  $, $ \log\left| f'\circ \pi_{U, 0}\right| \in L^1(\widetilde{\omega_U})  $. Then, Birkhoff Ergodic Theorem yields
\begin{align*}
&	\lim\limits_{m}\frac{1}{m}\sum_{k=0}^{m-1}\log \left| f'(\pi_{U, 0}(\widetilde{f}^k(\left\lbrace x_n\right\rbrace _n)))\right|= \lim\limits_{m}\frac{1}{m}\sum_{k=0}^{m-1}\log \left| f'(\pi_{U, 0}(\widetilde{f}^{-k}(\left\lbrace x_n\right\rbrace _n)))\right|= 
	\\
=	&\lim\limits_{m}\frac{1}{m}\sum_{k=0}^{m-1}\log \left| f'(x_k)\right|=\lim\limits_{m}\frac{1}{m}\log ( \left| f'(x_0)\right|\dots  \left| f'(x_m)\right|)=\lim\limits_{m}\frac{1}{m}\log \left| (f^m)'(x_m)\right|.
\end{align*}
 Putting everyting together, we get that for $ \widetilde{\omega_U} $-almost every $ \left\lbrace x_n\right\rbrace _n $, we have \[\lim\limits_{n}\dfrac{1}{n}\log \left| (f^n)'(x_n)\right| =\chi_{\omega_U}(f).\]
	For condition {\em\ref{lemma2c}}, note that
by hypothesis {\em\ref{thm-pesin-hyp-SV}},  \[\sum_{n\geq 1}\widetilde{\omega_U}(\pi_{U,n}^{-1}(D_n))=\sum_{n\geq 1}\omega_U(D_n)<\infty,\] 
	Thus, by the Borel-Cantelli Lemma \ref{lemma-borel-cantelli}, for $ \widetilde{\omega_U} $-almost every $ \left\lbrace x_n\right\rbrace _n\in\widetilde{\partial U} $, $ x_n\in D_n $ for only finitely many $ n $'s, as desired. 
	This ends the proof of the Lemma.
\end{proof}

	Let us fix a backward orbit $ \left\lbrace x_n\right\rbrace _n $ satisfying the conditions of the previous lemma. By {\em\ref{lemma2c}}, there exists $ n_1\in\mathbb{N} $ such that, for $ n\geq n_1$, $x_n\notin \bigcup_{s\in SV} D(s, M^n)$.
			Moreover, by {\em\ref{lemma2b}}, there exists $ n_2\in\mathbb{N} $, $ n_2\geq n_1 $ such that, for $ n\geq n_2$, \[\left| (f^n)'(x_n)\right| ^{-\frac{1}{4}}<M^n<1.\] 
	Note that both $ n_1 $ and $ n_2 $ do not only depend on the starting point $ x_0 $, but on all the backward orbit $ \left\lbrace x_n\right\rbrace _n $. Two different backward orbits starting at $ x_0 $ may require different $ n_1 $ or  $ n_2 $.

Let \[b_n\coloneqq \left| (f^{n+1})'(x_n)\right| ^{-\frac14}, \hspace{0.5cm}P\coloneqq \prod_{n\geq 1}(1-b_n).\]
Observe that, since $ \sum_n b_n\leq\sum_n M^n<\infty $, the infinite product in $ P $ is convergent; in particular $P$ is positive.  Choose $ r\coloneqq r(\left\lbrace x_n\right\rbrace _n) >0$ such that \begin{enumerate}[label={(2.\arabic*)}]
	\item\label{lemma3a} $ 2rP<1$,
	\item\label{lemma3b} for $1\leq  n\leq n_2 $, the branch $ F_{n} $ of $ f^{-n} $ sending $ x_0 $ to $ x_n $ is well-defined in $ D(x_0, r) $,
	\item\label{lemma3c.euclidi}  $ F_{n_2}(D(x_0, r\prod_{m=1}^{n_2}(1-b_m)))\subset D(x_{n_2}, M^{n_2}) $.  
\end{enumerate}
The remaining inverse branches will be constructed by induction (Claim \ref{claim-diametres}), but first let us note that such a $ r>0 $ exists. Indeed,  it follows from the fact that the inverse branch $ F_{n_2} $ sending $ x_0$ to $ x_{n_2} $ is well-defined in an open neighbourhood of $ x_0$, since the set of singular values of $ f^{n_2} $ is closed, and $ x_0\notin SV(f^{n_2} ) $.

\begin{claim}\label{claim-diametres}
	For every $ n\geq n_2 $, there exists an inverse branch $ F_n $ sending $ x_0 $ to $ x_n$, defined in $ D(x_0, r\prod_{m=1}^n(1-b_m)) $, and such that \[ F_n(D(x_0, r\prod_{m=1}^n(1-b_m)) )\subset D(x_n, M^n).\]
\end{claim}

Note that proving the claim ends the proof of the theorem. Indeed,  letting $ n\to \infty $ we get that all inverse branches are well-defined in $ D(x_0, rP) $, i.e. in a disk centered at $ x_0 $ of positive radius. The estimate on the derivative follows from  $\left| (f^n)'(x_n)\right| ^{-\frac{1}{4}}<M^n<1$, with $ M\in (e^{\frac{1}{4}\chi}, 1) $, for $ n\geq n_2 $. 
\begin{proof}[Proof of the claim]
	Suppose the claim is true for some $ n\geq n_2 $, and let us see that it also holds for $ n+1 $.
	First, note that $ D(x_n, M^n)\cap SV=\emptyset  $ for all $ n\geq n_2 $ (by the choice of $ n_2 $).
	  Hence, there exists a branch $ F $ of $ f^{-1} $ satisfying $ F(x_n)=x_{n+1} $, well-defined in $ D(x_n, M^n)$. By the inductive hypothesis, there exists an inverse branch $ F_n $ sending $ x_0 $ to $ x_n$, defined in $D_n\coloneqq  D(x_0, r\prod_{m=1}^n(1-b_m)) $, and such that $ F_n(D_n )\subset D(x_n, M^n) $.
	Set $ F_{n+1}=F\circ F_n $. Then, $ F_{n+1} $   is well-defined in $ D_n$, and sends $ x_0 $ to $ x_{n+1} $.  
	
		Now we use Koebe's distortion estimates (Thm. \ref{thm-Koebe}) to prove the bound on the size of $  F_{n+1}( D_{n+1})$, where $ D_{n+1}\coloneqq  D(x_0, r\prod_{m=1}^{n+1}(1-b_m))\subset D_n$.  Note that $ F_{n+1} $ is well-defined in $ D_n$, which is strictly larger than $ D_{n+1}$, and the ratio between the two radii of both disks is $ (1-b_{n}) $. Since $ F_{n+1}|_{D_{n}}$ is univalent, we have $ F_{n+1}(D_{n+1})\subset D(x_{n+1}, R) $, where \begin{align*}
		R=r\cdot \prod_{m= 1}^{n+1}(1-b_m)\cdot \left| (F_{n+1})'(x_{0})\right| \cdot \dfrac{2}{b^3_n}\leq 2r\cdot\dfrac{\left| (f^{n+1})'(x_{n+1})\right|^{-1} }{\left| (f^{n+1})'(x_{n+1})\right| ^{-\frac34}}\leq\left| (f^{n+1})'(x_{n+1})\right| ^{-\frac14} \leq M^{n+1},
	\end{align*} as desired.
\end{proof}
As noted before, this last claim ends the proof of Theorem \ref{thm-pesin}.\hfill $ \square $
\section{Entire functions and the first return map. Theorem \ref{teo:B}}\label{sect-pesin-entire}

In this section, we extend Theorem \ref{teo:A} to parabolic and Baker domains of entire maps. The main challenge is that there does not exist an invariant probability  which is absolutely continuous  with respect to harmonic measure. However, the existence of an invariant $ \sigma $-finite measure in $ \partial U $ absolutely continuous  with respect to $ \omega_U $ will allow us to perform Pesin theory, by means of the first return map.

We shall start by constructing Rokhlin's natural extension (note that this is indeed possible due to the existence of the $ \sigma $-invariant measure). We do this by showing that Rokhlin's natural extension is compatible with the use of first return maps if the transformation we consider is recurrent. This allows us to move from our problem of finding inverse branches in a space endowed with a $ \sigma $-invariant measure to a probability space, where we can perform Pesin theory in a standard way. We do this construction of the first return map and Rokhlin's natural extension in Section \ref{subsect-FRMandRokhlin}, and finally we develop Pesin theory in Section \ref{subsection-Pesin's-theoryFRM}.

\subsection{The first return map and Rokhlin's natural extension}\label{subsect-FRMandRokhlin}

Assume  $ U $ is a parabolic basin or a  Baker domain, such that $ f|_{\partial U} $ is recurrent. The measure 
\[\lambda_\mathbb{R} (A)=\int_A \frac{1}{\left| w-1\right| ^2}d\lambda(w), \hspace{0.5cm}A\in \mathcal{B}(\partial\mathbb{D}),\] is invariant under the radial extension of the associated inner function $ g $ (taken such that 1 is the Denjoy-Wolff point)
and its push-forward $\mu=(\varphi^*)_*\lambda_\mathbb{R}$ is an infinite $ \sigma $-finite invariant measure supported in $ \widehat{\partial}U $ (see Thm. \ref{corol-invariant-measures}). 

One can consider the Rokhlin's natural extension.
Indeed, let $ \widetilde{\partial U} $ be the space of backward orbits $ \left\lbrace x_n\right\rbrace _n\subset \partial U $, with $ f(x_{n+1})=x_n $ for $ n\geq 0 $, and let $ \widetilde{f}\colon  \widetilde{\partial U}\to \widetilde{\partial U}$ be the automorphism which makes  the following diagram commute.
\[\begin{tikzcd}[row sep=-0.2em,/tikz/row 3/.style={row sep=4em}]
	...\arrow{r}{\widetilde{f}} &	\widetilde{\partial U} \arrow{rr}{\widetilde{f}} & &	\widetilde{\partial U} \arrow{rr}{\widetilde{f}} & &	\widetilde{\partial U}\arrow{r}{\widetilde{f}} &...\\ 
	&	{\scriptstyle \left\lbrace x_{n+2}\right\rbrace _n} \arrow{ddddddd}{\pi_{U,n}}& &{\scriptstyle \left\lbrace x_{n+1}\right\rbrace _n} \arrow{ddddddd}{\pi_{U,n}}& &	{\scriptstyle \left\lbrace x_{n}\right\rbrace _n }\arrow{ddddddd}{\pi_{U,n}} \\ \\ \\	\\ \\ \\ \\...\arrow{r}{f} &\partial U \arrow{rr}{f}  &&
	\partial U \arrow{rr}{f}  && \partial U \arrow{r}{f} &...\\ &{\scriptstyle x_{n+2} }&&{\scriptstyle x_{n+1} }&&{\scriptstyle x_n}
\end{tikzcd}
\]

One can get an equivalent construction of backward orbits by means of the first return map. Indeed,  let $ E \subset\partial U$ be a measurable set with $ \mu(E)\in (0, \infty) $ (we will fix $ E $ later). Consider the \textit{first return map}  to $ E$, i.e. \begin{align*}
	f_{E}\colon& E\longrightarrow E \\&	x\longmapsto f^{T(x)}(x),
\end{align*}
where  $ T(x) $ denotes the first return time of $ x $ to $ E$. 
We consider the measure-theoretical dynamical system $ (E, \mu_k, f_{X_k}) $, where \[\mu_E(A)\coloneqq \frac{\mu(A\cap E)}{\mu (E)},\] for every measurable set $A\subset  \partial U $. Note that $ (E, \mu_E) $ is a probability space.
The following properties of the first return map $ f_{E} $ will be needed. 
\begin{lemma}{\bf (First return map)}\label{lemma-return-map entire}
	Let $ f_{E}\colon E\to E$ be defined as above. Then, the following holds.\begin{enumerate}[label={\em(1.\arabic*)}]
		\item \label{1.1entire} $ \mu_E $ is invariant under $ f_{E} $. In particular, $ f_{E}$ is recurrent with respect to   $ \mu_E$.
		\item \label{1.2entire}$ f_{E} $ is ergodic with respect to   $ \mu_E $.
		\item\label{1.3entire} If  $ \log \left| f'\right| \in L^1(\mu) $ , then  $ \log \left| f'_{E}(x)\right|\coloneqq  \log \left| (f^{T(x)})'(x)\right| \in L^1(\mu_E) $ and \[ \int_{X_k} \log \left| f'_{E}\right| d\mu_k= \dfrac{1}{\mu (E)} \int_{\partial U} \log \left| f'\right| d\mu.\]
	\end{enumerate}
\end{lemma}
\begin{proof} The three claims are standard facts of measure-theoretical first return maps. More precisely, {\em \ref{1.1entire}} and {\em \ref{1.2entire}} follow from \cite[Prop. 10.2.1]{UrbanskiRoyMunday1} and \cite[Prop. 10.2.7]{UrbanskiRoyMunday1}, respectively. Statement {\em \ref{1.3entire}} comes from \cite[Prop. 10.2.5]{UrbanskiRoyMunday1}, applied to $ \varphi=\log \left| f'\right|  $ and $ \varphi_{E}=\log \left| f'_{E}\right|  $ .
\end{proof}

Since $ (X_E, \mu_E) $ is a Lebesgue probability space, and $ \mu_E $ is $ f_E$-invariant, we shall consider its Rohklin's natural extension, denoted by $ (\widetilde{E},  \widetilde{f_E}) $, and given by the projecting morphisms $ \left\lbrace \pi_{E_{n}}\right\rbrace _n $. Thus, $ \widetilde{E} $ is the space of backward orbits $ \left\lbrace x^E_n\right\rbrace _n\subset \partial U $, with $ f_{E}(x^E_{n+1})=x^E_n $ for $ n\geq 0 $, and $ \widetilde{f_{E}}\colon  \widetilde{E}\to \widetilde{E}$ is the automorphism which makes  the following diagram commute.
\[\begin{tikzcd}[row sep=-0.2em,/tikz/row 3/.style={row sep=4em}]
	...\arrow{r}{\widetilde{f_{E}}} &	\widetilde{E} \arrow{rr}{\widetilde{f_{E}}} & &	\widetilde{E} \arrow{rr}{\widetilde{f_{E}}} & &	\widetilde{E}\arrow{r}{\widetilde{f_{E}}} &...\\ 
	&	{\scriptstyle \left\lbrace x^E_{n+2}\right\rbrace _n} \arrow{ddddddd}{\pi_{E_{n}}}& &{\scriptstyle \left\lbrace x^E_{n+1}\right\rbrace _n} \arrow{ddddddd}{\pi_{E_{n}}}& &	{\scriptstyle \left\lbrace x^E_{n}\right\rbrace _n }\arrow{ddddddd}{\pi_{E_{n}}} \\ \\ \\	\\ \\ \\ \\...\arrow{r}{f_{E}} & E \arrow{rr}{f_{E}}  &&
 E \arrow{rr}{f_{E}}  && E \arrow{r}{f_{E}} &...\\ &{\scriptstyle x^E_{n+2} }&&{\scriptstyle x^E_{n+1} }&&{\scriptstyle x^E_n}
\end{tikzcd}
\]

Since the natural extension of a probability space inherits the ergodic properties of the original  system, we have that $ \widetilde{\mu_E} $ is  $ \widetilde{f_{E} }$-invariant, ergodic and recurrent (Prop. \ref{prop-ergodic-properties-natural-ext}). 

We claim that both constructions of spaces of backward orbits are essentially the same, with the only difference that, when considering the first return map, orbits starting at the set $ E $ are written `packed' according to their visits to $ E $.

Indeed, given a backward orbit $ \left\lbrace x^E_n\right\rbrace _n \subset E$ for $ f_{E} $, we can associate to it unambiguously a backward orbit $ \left\lbrace x_n\right\rbrace _n \subset \partial U$ for $ f$ as follows. Let $ x_0\coloneqq x^E_0 $, and let $ x_{T(x^E_1)}\coloneqq x^E_1 $. Since $ f_{E}(x^E_1)= f^{T(x^E_1)}(x^E_1)= x^E_0 $, for $ n=1,\dots, T(x^E_1)-1 $, let $ x_n \coloneqq f^{T(x^E_1)-n}(x^E_1)$. 
The rest of the backward orbit is defined recursively. We say that the $ f $-backward orbit $ \left\lbrace x_n\right\rbrace _n \subset \partial U$ is  \textit{associated} to the $ f_{E} $-backward orbit $ \left\lbrace x^E_n\right\rbrace _n \subset E$.   In the same way, if a $ f $-backward orbit visits $ E $ infinitely often, we can associate a $ f_E $-backward orbit to it.

As noted above, for every $ f_{E} $-backward orbit we can associate a $ f $-backward orbit. Moreover, due to recurrence, the converse is true $ \widetilde{ \mu} $-almost everywhere. Hence, it is enough to consider $ f_{E} $-backward orbits.

\begin{lemma}{\bf (Distribution of $ f_{E} $-backward orbits in $ \partial U $)}\label{lemma-FRM}
	Let  $ (\partial U, \mu, f) $ and  $ (E, \mu_E, f_{E}) $, and consider their natural extensions as before. Then, for $ \widetilde{\mu} $-almost every $ \left\lbrace x_n\right\rbrace _n\subset \partial U $ with $ x_0\in E $,  $ x_n\in E $ infinitely often, so we can associate a  $ f_{E} $-backward orbit to it.
\end{lemma}

Let us now fix the set $ E $, whose first return map will enjoy specific properties. 

\begin{prop}{\bf (The set $ E $)}\label{lemmaE}
		Let $ f\colon\mathbb{C}\to\mathbb{C}$ be a meromorphic function, and let $U$ be a simply connected parabolic basin or   Baker domain. Consider $ \varphi\colon \mathbb{H} \to U$, and let $ h\colon\mathbb{H}\to\mathbb{H} $ be the inner function associated with $ (f, U) $, with the Denjoy-Wolff point placed at $ \infty $, which we assume not to be a singularity. Let $ p^\pm_1 , p^\pm_2\dots$ be the (radial) preimages of $ \infty $, ordered such that $ p^-_1<p^-_2<\dots<p^+_2<p^+_1 $.

		Let $I\coloneqq \left[ p^-_1, p^+_1\right] $, and  $E\coloneqq \varphi^*(I) $. Then, as $ n\to\infty $,
		\[\mu(\left\lbrace x\in E\colon T(x)\geq n\right\rbrace )\sim \frac2{\sqrt n}.\]
\end{prop}
\begin{proof}
	The existence of the set $ I $ 
	is proven in
	\cite[Sect. 9.2]{ivrii2023inner}, together with the fact that $ \lambda(I^C\cap \left\lbrace T(x)=n\right\rbrace )\sim \frac2{\sqrt n} $. The standard fact for $ \sigma $-finite measures $ \lambda $ and sweep-out sets $ I $
	\[\lambda(I\cap \left\lbrace T(x)=n\right\rbrace )=\lambda(I^C\cap \left\lbrace T(x)>n\right\rbrace )\] (see e.g. \cite[p.19, Corol. 1]{notesInfErgodicTheory}) gives that $ \lambda(I\cap \left\lbrace T(x)>n\right\rbrace )\sim \frac2{\sqrt n} $.
	The estimates for the system $ (f|_{\partial U}, \mu) $ follow from the definition of the measure $ \mu $ as the push-forward of $ \lambda_\mathbb{R} $ under $ \varphi^* $.
\end{proof}
\subsection{Pesin theory for the first return map. Proof of Theorem \ref{teo:B}}\label{subsection-Pesin's-theoryFRM}

We shall start by rewriting  Theorem \ref{teo:B} in terms of the space of backward orbits given by Rokhlin's natural extension.

\begin{thm}{\bf (Inverse branches are well-defined almost everywhere)}\label{thm-pesin-entire}
		Let $ f\colon\mathbb{C}\to\mathbb{C}$ be a meromorphic function, and let $U$ be a simply connected parabolic basin or   Baker domain. 
	Assume 
	\begin{enumerate}[label={\em (\alph*)}]
		\item\label{thm-pesin-entire-recurrence} the Denjoy-Wolff point of the associated inner function is not a singularity;
		\item\label{thm-pesin-entire-lyap} $ \log \left| f'\right| \in L^1(\mu) $, and  $ \int_{\partial U} \log \left| f'\right| d\mu>0$;
		\item \label{thm-pesin-entire-hyp-SV}  there is $ \varepsilon >0 $ such that, if $\partial U_{+\varepsilon} \coloneqq \left\lbrace z\in\mathbb{C}\colon \textrm{\em dist}(z, \partial U)<\varepsilon \right\rbrace $,  $ SV\cap \partial U_{+\varepsilon} $ is finite.
	\end{enumerate}
		Then, for $ \widetilde{\mu} $-almost every backward orbit $ \left\lbrace x_n\right\rbrace _n \in \widetilde{\partial U}$, there is $ r_0\coloneqq r_0(\left\lbrace x_n\right\rbrace _n)>0 $ such that
	\begin{enumerate}[label={\em (\roman*)}]
		\item the inverse branch $ F_n $ sending $x_0 $ to $ x_n $ is well-defined in $ D(x_0,r_0) $;
		\item  for every $ r\in (0, r_0)$, there exists $ m\in\mathbb{N} $ such that $F_m(D(x_0, r_0))\subset D(x_0, r)$, and $ \textrm{\em diam } F^j_m(D(x_0, r))\to 0 $, as $ j\to\infty $. 
	\end{enumerate}
\end{thm}

It is clear that Theorem \ref{thm-pesin-entire} implies Theorem \ref{teo:B} (for hypothesis (a), see Thm. \ref{thm-ergodic-boundary-map}; for (b), see Prop. \ref{prop-log-int-parabolic-basins}). 
Going one step further, using the set $ E $ defined above,  we shall write Theorem \ref{thm-pesin-entire} in terms of the first return maps $ f_{E}\colon E \to E $ as follows.

\begin{prop}{\bf (Generic inverse branches are well-defined for the first return map)}\label{prop65}
	Under the assumptions of Theorem \ref{thm-pesin-entire}, consider the set $E$ as in Section \ref{subsect-FRMandRokhlin}. Then,  for $ \widetilde{\mu_E} $-almost every backward orbit $ \left\lbrace x^E_n\right\rbrace _n \in \widetilde{\partial U}$, there exists $ r_0\coloneqq r_0(\left\lbrace x^k_n\right\rbrace _n)>0 $ such that
	\begin{enumerate}[label={\em (\roman*)}]
		\item the inverse branch $ F^E_n $ sending $x^E_0 $ to $ x^E_n $ is well-defined in $ D(x^E_0,r_0) $;
		\item  for every $ r\in (0, r_0)$, there exists $ m\in\mathbb{N} $ such that \[F^E_m(D(x^E_0, r_0))\subset D(x^E_0, r).\]
	\end{enumerate}
\end{prop}

\subsection{Proof of Proposition \ref{prop65}}

We shall establish which are the `singular values' for the first return map $ f_{E}$. The only obstructions when defining inverse branches come from the singular values of $ f $. Note that, if there are no singular values of $ f $ in $ D(f_{E}(x), \varepsilon) $ and the first return time of $ x $ is 1, then the corresponding branch of $ f_{E} $ is well-defined in $ D(f_{E}(x), \varepsilon) $. Inductively,  if there are no critical values of $ f^n $ in $ D(f_{E}(x), \varepsilon) $ and the first return time of $ x $ is $ n $, then the corresponding branch of $ f_{E} $ is well-defined in $ D(f_{E}(x), \varepsilon) $. Hence, we observe an interplay between the points in the orbit of singular values of $ f $ and the first return times, as the limitation to define the inverse branches of $ f_{E} $.

Next we aim to give estimates on the first return times and the size of disks centered at `singular values of $ f_{E}$'. This is the content of Lemma \ref{lemma-SV-f_A}.

We use the following notation: let $ \left\lbrace v_1, \dots, v_N\right\rbrace  $ be the singular values of $ f $ in $\partial U_{+\varepsilon}$ (we assumed there are finitely many-- other singular values do not play a role in the considered inverse branches), and denote them by $ SV(f) $. $ T(x) $ stands for the first return time to $ E $ of $ x\in E $. 
\[A_n\coloneqq \left\lbrace x\in E\colon T(x)=n \right\rbrace \]
\[B_n \coloneqq \left\lbrace x\in E\colon T(x)\geq n \right\rbrace\]
\begin{sublemma}{\bf (Estimates on critical values and first returns)}\label{lemma-SV-f_A} In the previous setting, the following holds.
\begin{enumerate}[label={\em (2.\arabic*)}]
		\item\label{lemma1b-entire} $ \sum\limits_{n} \mu_E(B_{n^4})<\infty $.
	\item\label{lemma1c-entire} $ \sum_n \mu_E(D(CV(f^{n^4}), \varepsilon\cdot \lambda^n)) <\infty$, for any $ \lambda\in (0,1) $.
\end{enumerate}
\end{sublemma}
\begin{proof}
{\em \ref{lemma1b-entire}} follows directly from the estimate in Lemma \ref{lemmaE}.
	For {\em \ref{lemma1c-entire}}  note that, since $ E $ has finite measure, the measures $ \mu_E $ and $ \omega_U $ are comparable. Note also that $ f^{n^4} $ has $ n^4\cdot N $ singular values (where $ N $ stands for the number of singular values of $ f $). Then, applying a standard estimate of the harmonic measure of disks (see Lemma \ref{lemma-estimates-harmonic-measure}), we have
	\begin{align*}
		&\mu(E)\sum_n \mu_k(D(CV(f^{n^4}),\varepsilon \cdot \lambda^n))\lesssim\sum_n \mu (D(CV(f^{n^4}), \varepsilon\cdot \lambda^n))\\
		\lesssim&\sum_n \omega_U(D(CV(f^{n^4}), \varepsilon\cdot \lambda^n)) \leq  \sum_n \varepsilon^{1/2}\cdot  N\cdot n^4\cdot \lambda ^{n/2} <\infty.
	\end{align*}
\end{proof}

From here, the proof ends as the one of Theorem \ref{thm-pesin}:  proving that orbits under $ f_{E} $ do not come close to the `singular values of $ f_{E}$', and finally constructing inductively the required inverse branches of $ f_{E}$, which turn out to be a composition of inverse branches for the original map $ f $, as explained in Section \ref{subsect-FRMandRokhlin}. For convenience, we outline the steps of the proof, although not giving all the details as in Theorem \ref{thm-pesin}.

		Set \[\chi \coloneqq \int_{E} \log \left| f'_{E}\right| d\mu_E \in (0, +\infty), \] and let $ M\in (e^{\frac{1}{4}\cdot\chi}, 1) $.
\begin{sublemma}{\bf (Almost every orbit does not come close to singular values) }
	For $ \widetilde{\mu_E} $-almost every $ \left\lbrace x^E_n\right\rbrace _n\in\widetilde{E} $, we have \begin{enumerate}[label={\em (3.\arabic*)}]
		\item\label{lemma2a-entire} $ x^E_0 \notin \bigcup\limits_{s\in SV(f)} \bigcup\limits_{n\geq0}f^n(s)$,
		\item\label{lemma2b-entire} $ \lim\limits_{n}\dfrac{1}{n}\log \left| (f_{E}^n)'(x^E_n)\right| =\chi$,
		\item\label{lemma2c-entire} inverse branches of $ f_{E} $ are well-defined in $ D(x^E_n, \varepsilon\cdot M^n) $, except for finitely many $ n $'s.
	\end{enumerate}
\end{sublemma}
\begin{proof} Since the finite intersection of sets of full measure has full measure, it is enough to show that each of the conditions is satisfied in a set of full measure.
	
	For condition {\em\ref{lemma2a-entire}}, note that $ \bigcup_{s\in SV(f)} \bigcup_{n\geq0}f^n(s) $ is countable, and hence has zero $ \mu_E$-measure.
	Requirement {\em\ref{lemma2b-entire}} follows from the Birkhoff Ergodic Theorem \ref{thm-birkhoff} applied to the map $ \log \left| f_{E}'\right| $ (note that $ \mu_E$ is an ergodic probability).

	Condition {\em\ref{lemma2c-entire}} follows from Lemma \ref{lemma-SV-f_A} together with the first Borel-Cantelli Lemma \ref{lemma-borel-cantelli}. Indeed,    if 
$ D_n=(D(CV(f^{n^4}),\varepsilon\cdot  M^n)) $, then
\[\sum_{n\geq 1}\widetilde{\mu_E}(\pi_{U,n}^{-1}(D_n))=\sum_{n\geq 1}\mu_E(D_n)<\infty,\] implying that $ x^E_{n}\notin D_n $, for all $ n\geq n_0 $, for some $ n_0 $ and $ \widetilde{ \mu_E} $-almost every backward orbit. But according to {\em \ref{lemma1b-entire}} in Lemma \ref{lemma-SV-f_A}, \[\sum_{n\geq 1}\widetilde{\mu_E}(\pi_{U,n}^{-1}(B_{n^4}))=\sum\limits_{n} \mu_E(B_{n^4})<\infty,\] so $ x^E_{n+1}\notin B_{n^4}$, for all $ n\geq n_0 $ (maybe taking $ n_0 $ larger),  and $ \widetilde{ \mu_E} $-almost every backward orbit. Thus, for all $ n\geq n_0 $, the return time of $ x^E_{n+1} $ is less than $ n^4 $, so we only have to take into account the singular values of $ f^{n^4} $ in order to define the inverse branch from $ x^E_{n} $ to $ x^E_{n+1} $. Since  $ x^E_{n}\notin D_n $, the claim follows.
\end{proof}

Fix $ \left\lbrace x^E_n\right\rbrace _n $ satifying the conditions of Lemma \ref{proof-pesin-entire-blow-up}. By {\em\ref{lemma2b-entire}} and  {\em\ref{lemma2c-entire}}, there exists $ n_0\in\mathbb{N} $ such that, for $ n\geq n_0$, inverse branches of $ f_{E} $ are well-defined in $ D(x^E_n, \varepsilon\cdot M^n) $, and  
\[\left| (f^n_{E})'(x^E_n)\right|
 ^{-\frac{1}{4}}<M^n<1.\]

We set the following notation:
\[b_n\coloneqq \left| (f^{n}_{E})'(x^E_{n})\right|^{-\frac{1}{4}}, \hspace{0.8cm}P=\prod_{n\geq 1}(1-b_n)>0.\]

Choose $ r\coloneqq r(\left\lbrace x^E_n\right\rbrace _n) >0$ such that \begin{enumerate}[label={(4.\arabic*)}]
	\item\label{lemma3a-entire} $ 2rP<\varepsilon$,
	\item\label{lemma3b-entire}  the branch $ F^k_{n_0} $ of $ f_{E}^{-n_0} $ sending $ x^E_0 $ to $ x^E_{n_0} $ is well-defined in $ D(x_0, r) $,
	\item\label{lemma3c-entire} 
	$ F^E_{n_0}(D(x^E_0, r\prod_{m=1}^{n_0}(1-b_m)))\subset D(x^E_{n_0}, M^{n_0}) $.  
\end{enumerate}

Using the same procedure as in Theorem \ref{thm-pesin} (Claim \ref{claim-diametres}), one can prove inductively the following claim.

\begin{claim}{\bf ({Inductive construction of the inverse branches})}
	For every $ n\geq n_0 $, there exists a branch $ F^E_{n} $ of $ f^{-n}_{E} $ sending $ x^E_0 $ to $ x^E_n$, defined in $ D(x_0, r\prod_{m=1}^n(1-b_m)) $, and such that
	\[F^E_{n}(D(x^E_0, r\prod_{m=1}^{n}(1-b_m)))\subset D(x^E_{n}, \varepsilon\cdot M^{n}).\]
\end{claim}

 Letting $ n\to \infty $, we get that all inverse branches of $ f_{E}$ sending $ x^E_0 $ to $ x^E_n $ are well-defined in $ D(x_0, rP) $, with $ r>0 $.  Moreover, as $ n\to \infty $, $ \textrm{diam} \left( F^E_{n} ( D(x_0, rP)) \right) \leq \varepsilon \cdot M^{n}\to 0$.

This ends the proof of Proposition \ref{prop65}. \hfill $ \square $

\subsection{Parabolic Pesin theory for entire functions}\label{subsect-entire-parabolic}

We end this section by showing that, when we are dealing with an entire function, it is enough to ask that there are only finitely many critical values in $ \partial U_{+\varepsilon} $. To see this, it it enough to show that inverse branches for $ f^E$ are well-defined far from the orbit of critical values of $ f $ and from exceptional points (points with finite backwards orbit; any entire function has at most two exceptional points \cite[p. 6]{Bergweiler}). 

	\begin{lemma}\label{proof-pesin-entire-blow-up}
	Let $ x\in E $. Then, the inverse branch $ F^E_1 $ sending $ f^E(x) $ to $x$ is well-defined in $ D(f^E(x), r) $, $ r<\varepsilon $, as long as $ D(f^E(x), r) \cap CV(f^{T(x)})=\emptyset $,  and there are no exceptional points in $ D(f^E(x), r) $.
\end{lemma}

\begin{proof}
	If there are no exceptional points in $ D(f^E(x), r) $, it is easy to see that there exists $ R>0 $ such that $ f^{T(x)}|_{D(x,R) }$ is holomorphic, and $ f^{T(x)}(D(x,R))\supset D(f^{T(x)}(x),r) $ (see e.g. \cite[Corol. 14.2]{milnor}). Hence, all obstructions to define the inverse branch $ F^E_1 $ come from the critical values of $ f^{T(x)} $, but we assumed there are none.
\end{proof}

Lemma \ref{proof-pesin-entire-blow-up} is telling us that critical values together with exceptional points are the `singular values of $ f^E $', and there are only finitely many of them. Hence, one can prove a result analogous to Proposition \ref{prop65}.

\begin{thm}{\bf (Inverse branches are well-defined almost everywhere)}
	Let $ f\colon\mathbb{C}\to\mathbb{C}$ be an entire function, and let $U$ be a parabolic basin or   Baker domain. 
	Assume 
	\begin{enumerate}[label={\em (\alph*)}]
		\item the Denjoy-Wolff point of the associated inner function is not a singularity;
		\item$ \log \left| f'\right| \in L^1(\mu) $, and  $ \int_{\partial U} \log \left| f'\right| d\mu>0$;
		\item  there exists $ \varepsilon >0 $ such that the set of critical values of $ f $ in $ \partial U_{+\varepsilon} $ are finite.
	\end{enumerate}
	Then, for $ \widetilde{\mu} $-almost every backward orbit $ \left\lbrace x_n\right\rbrace _n \in \widetilde{\partial U}$, there is $ r_0\coloneqq r_0(\left\lbrace x_n\right\rbrace _n)>0 $ such that
	\begin{enumerate}[label={\em (\roman*)}]
		\item the inverse branch $ F_n $ sending $x_0 $ to $ x_n $ is well-defined in $ D(x_0,r_0) $;
		\item  $ \textrm{\em diam } F^j_m(D(x_0, r))\to 0 $, as $ j\to\infty $. 
	\end{enumerate}
\end{thm}

\section{Dynamics of  centered inner functions. Corollary \ref{corol:D}}\label{sect-inner-functions}

In this section we apply the techniques developed previously to a particular type of self-maps of the unit disk $ \mathbb{D} $, the so-called \textit{inner functions}. Recall that  $ g(0)=0 $, we say that $ g $ is a {\em centered inner function}.

Every inner function induces a measure-theoretical dynamical system $ g^*\colon\partial\mathbb{D}\to \partial\mathbb{D} $ defined $ \lambda $-almost everywhere. For 
centered inner functions, $ g^*|_{\partial\mathbb{D}} $ preserves the Lebesgue measure $ \lambda$ in $ {\partial\mathbb{D}}$, and $ g^*|_{\partial\mathbb{D}} $ is ergodic. Hence, the radial extension of a centered inner functions is a  good candidate  to perform Pesin theory. 
Therefore, we shall see Corollary \ref{corol:D} as an application of the work done in Theorem \ref{teo:A}, for a particular class of inner functions (centered inner functions with finite entropy, i.e. $ \log \left| g'\right| \in L^1(\lambda) $).

As in the previous sections, we rewrite Corollary \ref{corol:D} in terms of Rokhlin's natural extension (Thm. \ref{thm-pesin-inner}). 
 Indeed,  $ (\partial \mathbb{D}, \mathcal{B}(\partial\mathbb{D}), \lambda) $ is a Lebesgue space (it is isomorphic, in the measure-theoretical sense, to the unit interval), and hence Theorem \ref{thm-natural-extension} guarantees the existence of Rokhlin's natural extension.  Thus, $ \widetilde{\partial \mathbb{D}} $ is the space of backward orbits $ \left\lbrace \xi_n\right\rbrace _n\subset \partial \mathbb{D} $, with $ g^*(\xi_{n+1})=\xi_n $ for $ n\geq 0 $, and $ \widetilde{g^*}\colon  \widetilde{\partial \mathbb{D}}\to \widetilde{\partial \mathbb{D}}$ is the automorphism which makes  the following diagram commute.

\[\begin{tikzcd}[row sep=-0.2em,/tikz/row 3/.style={row sep=4em}]
	...\arrow{r}{\widetilde{g}} &	\widetilde{\partial \mathbb{D}} \arrow{rr}{\widetilde{g}} & &	\widetilde{\partial \mathbb{D}} \arrow{rr}{\widetilde{g}} & &	\widetilde{\partial \mathbb{D}}\arrow{r}{\widetilde{g}} &...\\ 
	&	{\scriptstyle \left\lbrace \xi_{n+2}\right\rbrace _n} \arrow{ddddddd}{\pi_{\mathbb{D},n}}& &{\scriptstyle \left\lbrace \xi_{n+1}\right\rbrace _n} \arrow{ddddddd}{\pi_{\mathbb{D},n}}& &	{\scriptstyle \left\lbrace \xi_{n}\right\rbrace _n }\arrow{ddddddd}{\pi_{\mathbb{D},n}} \\ \\ \\	\\ \\ \\ \\...\arrow{r}{g} &\partial \mathbb{D}\arrow{rr}{g}  &&
	\partial \mathbb{D} \arrow{rr}{g}  && \partial \mathbb{D}\arrow{r}{g} &...\\ &{\scriptstyle \xi_{n+2} }&&{\scriptstyle \xi_{n+1} }&&{\scriptstyle \xi_n}
\end{tikzcd}
\]

In this way, we can rephrase Corollary \ref{corol:D} as follows.
\begin{thm}{\bf (Pesin theory for centered inner function)}\label{thm-pesin-inner}
	Let $ g\colon\mathbb{D}\to\mathbb{D} $ be an inner function, such that $ g(0)=0 $, and $ \log \left| g'\right|\in L^1(\partial\mathbb{D})$. Fix $ \alpha\in (0,\pi/2) $.  Then, for $ \widetilde{\lambda }$-almost every backward orbit $ \left\lbrace \xi_n\right\rbrace_n \subset \partial\mathbb{D}$, there exists $ \rho >0 $ such that
	the inverse branch $ G_n $ of $ g^n $ sending $\xi_0 $ to $ \xi_n $ is well-defined in $ D(\xi_0,\rho) $, and, for all $ \rho_1\in (0, \rho) $, \[ G_n(R_{\rho_1}(\xi_0))\subset \Delta_{\alpha, \rho_1}(\xi_n).\]
	Moreover, the set of singularities $ E(g) $ has zero $ \lambda $-measure.
\end{thm}

Using that $ g^*|_{\partial\mathbb{D}} $ is ergodic and recurrent with respect to $ \lambda  $, it follows that for $ \widetilde{\lambda }$-almost every backward orbit $ \left\lbrace \xi_n\right\rbrace_n \subset \partial\mathbb{D}$ and every set $ A\subset\partial\mathbb{D} $ of positive measure, there exists  a sequence $ n_k\to\infty $ such that $ \xi_{n_k}\in A $ (Prop. \ref{prop-ergodic-properties-natural-ext}). Hence, it is clear that Theorem \ref{thm-pesin-inner} implies Corollary \ref{corol:D}.

\begin{proof}[Proof of Theorem \ref{thm-pesin-inner}]
	Proceeding exactly as in Theorem \ref{teo:A}, we find that, for $ \widetilde{\lambda }$-almost every backward orbit $ \left\lbrace \xi_n\right\rbrace_n \subset \partial\mathbb{D}$, there exists $ \rho_0 >0 $ such that
	the inverse branch $ G_n $ of $ g^n $ sending $\xi_0 $ to $ \xi_n $ is well-defined in $ D(\xi_0,\rho_0) $. Note that all inverse branches $ \left\lbrace G_n\right\rbrace_n  $ are well-defined in a disk of uniform radius, namely in $ D(\xi_0,\rho_0) $. Hence, we can apply Proposition  \ref{prop-radial-limits}, to see that, for all $ \alpha\in (0, \pi/2) $ there exists $ \rho<\rho_0 $ such that for all $ \rho_1\in (0, \rho) $, \[ G_n(R_{\rho_1}(\xi_0))\subset \Delta_{ \alpha, \rho_1}(\xi_n).\]
	
	It remains to see that the set of singularities has zero Lebesgue measure. Assume on the contrary that the set of singularities $ E(g) $ has positive measure. Then, 
	we can take $ \left\lbrace \xi_n\right\rbrace_n$ visiting $ E(g) $ infinitely often, and satisfying that the inverse branches $ \left\lbrace G_n\right\rbrace _n $ realizing such a  backward orbit are well-defined in $ D(\xi_0, \rho) $. Consider 
	\[ K\coloneqq \bigcup\limits_{n\geq1} G_n(D(\xi_0, \rho)). \] 
	
	We claim that no point in $ K $ is a singularity for $ g $. Indeed, for any $ \xi\in K $, there exists $ n\geq 1 $ such that $ \xi\in G_n(D(\xi_0, \rho)) $. Hence, \[g|_{G_n(D(\xi_0, \rho))}\colon G_n(D(\xi_0, \rho))\longrightarrow G_{n-1}(D(\xi_0, \rho))\] is univalent, so $ \xi $ cannot be a singularity for $ g $. This is a contradiction with the fact that $ K\cap E(g)\neq\emptyset $, and ends the proof of Corollary \ref{corol:D}.
\end{proof}

\section{Inner function associated to a Fatou component and Rokhlin's natural extension}\label{sect-inner-function-associatedFC}
Let $ f\in\mathbb{K} $, and let $ U $ be an invariant Fatou component for $ f $, which we assume to be simply connected. Consider $ \varphi\colon\mathbb{D}\to U $ to be a Riemann map. Then,  $ f\colon U\to U $ is conjugate by $ \varphi $ to a holomorphic map $ g\colon\mathbb{D}\to\mathbb{D} $, i.e. $ f\circ\varphi=\varphi\circ g$ (Sect. \ref{subsect-components-Fatou-K}).

The conjugacy $ f\circ\varphi=\varphi\circ g $ extends almost everywhere to $ \partial \mathbb{D} $ by means of the radial extensions $ \varphi^*\colon \partial \mathbb{D}\to\partial U $ and $ g^*\colon\partial\mathbb{D}\to\partial\mathbb{D} $. 
More precisely, consider
the following subsets of $ \partial \mathbb{D} $.
\[\Theta_E\coloneqq \left\lbrace \xi\in\partial\mathbb{D}\colon \varphi^*(\xi)\in E(f) \right\rbrace  \] 
\[\Theta_\Omega\coloneqq \left\lbrace \xi\in\partial\mathbb{D}\colon \varphi^*(\xi)\in\Omega(f) \right\rbrace \] 
Since $ E(f) $ is countable, $ \lambda(\Theta_E)=0 $, so $ \lambda(\Theta_\Omega)=1 $. Then, $ f\circ \varphi=\varphi\circ g $ extends for the radial extensions in $ \Omega_\Theta $, as shown in the following lemma.
\begin{lemma}{\bf(Radial limits commute, {\normalfont \cite[Lemma 5.5]{Jove}})}\label{lemma-radial-limit}
	Let $ \xi\in\Theta_\Omega $, then $ g^*(\xi) $ and $ \varphi^*(g^*(\xi)) $ are well-defined, and \[f(\varphi^*(\xi))=\varphi^*(g^*(\xi)).\] 
\end{lemma}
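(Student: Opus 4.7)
The plan is to use the conjugacy $f\circ\varphi=\varphi\circ g$ along the radial segment $R(\xi)=\{t\xi:t\in[0,1)\}$, combined with the continuity of $f$ at $\varphi^*(\xi)\in\Omega(f)$, to pin down the limiting behaviour of $g(t\xi)$ and then apply Lehto-Virtanen (Thm.~\ref{thm-lehto-virtanen}).

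First I would evaluate the conjugacy along $R(\xi)$. Since $\varphi(t\xi)\to\varphi^*(\xi)\in\Omega(f)$ and $f$ is meromorphic (hence continuous) at $\varphi^*(\xi)$, I obtain
\[
\lim_{t\to 1^-}\varphi(g(t\xi))=\lim_{t\to 1^-}f(\varphi(t\xi))=f(\varphi^*(\xi))\in\partial U.
\]
Because $\varphi\colon\mathbb{D}\to U$ is a homeomorphism and $f(\varphi^*(\xi))\notin U$, no subsequence of $g(t\xi)$ may converge to a point of $\mathbb{D}$; hence the cluster set $C$ of the continuous curve $t\mapsto g(t\xi)$ at $t=1^-$ is a connected compact subset of $\partial\mathbb{D}$.

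Next I would show that $C=\{\eta\}$ is a singleton, after which Lehto-Virtanen applied to the bounded holomorphic function $g$ yields $g^*(\xi)=\eta$, and applied to $\varphi$ along the curve $g(R(\xi))$ landing at $\eta$ yields $\varphi^*(\eta)=f(\varphi^*(\xi))$, giving the claim. To rule out $C$ being a non-degenerate arc, I would argue by contradiction: for each $\eta$ in the interior of $C$ one can extract from the oscillating curve $g(R(\xi))$ a sub-curve $\gamma_\eta$ landing at $\eta$, whose image $\varphi(\gamma_\eta)\subset\varphi(g(R(\xi)))$ still lands at $f(\varphi^*(\xi))$; Lehto-Virtanen applied to $\varphi$ then forces $\varphi^*(\eta)=f(\varphi^*(\xi))$. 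Carrying this out for a $\lambda$-positive measure family of $\eta\in C$ contradicts Fatou-Riesz (Thm.~\ref{thm-FatouRiez}), which asserts that $\{\zeta\in\partial\mathbb{D}\colon\varphi^*(\zeta)=f(\varphi^*(\xi))\}$ has $\lambda$-measure zero.

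The hard part will be the sub-curve extraction for a positive-measure set of $\eta\in C$: the curve $g(R(\xi))$ may oscillate through $\eta$ without a canonical ``tail'' landing there, so some care is needed to produce a genuine continuous landing at sufficiently many interior points of $C$. A cleaner alternative I would also consider is prime-end theoretic: the point $\varphi^*(\xi)\in\Omega(f)$ is accessible from $U$ through the curve $\varphi(R(\xi))$, so it determines a unique prime end; $f$ carries this to a prime end at $f(\varphi^*(\xi))\in\partial U$, corresponding under Carath\'eodory's theorem to a unique $\eta\in\partial\mathbb{D}$, and the local continuous extension of $g=\varphi^{-1}\circ f\circ\varphi$ to $\xi$ gives $g^*(\xi)=\eta$ and $\varphi^*(\eta)=f(\varphi^*(\xi))$ directly, circumventing the cluster-set analysis.
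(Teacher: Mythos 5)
Your second, prime-end-theoretic route is the correct one, but it should be phrased via the Lindel\"of-type theorem rather than via ``$f$ acting on prime ends'' (the restriction $f|_U$ is not a homeomorphism, so it does not act on prime ends; and $g$ has no local continuous extension to $\xi$ in general). The precise tool is: if $\psi\colon\mathbb{D}\to V$ is a Riemann map and $\Gamma$ is a curve in $V$ landing at $w\in\widehat{\partial}V$, then $\psi^{-1}(\Gamma)$ lands at some $\eta\in\partial\mathbb{D}$ and $\psi$ has angular limit $w$ at $\eta$ (Pommerenke, \emph{Boundary Behaviour of Conformal Maps}, Cor.~2.17). Apply this to $\psi=\varphi$ and $\Gamma=\varphi(g(R(\xi)))=f(\varphi(R(\xi)))$, which lands at $f(\varphi^*(\xi))\in\widehat{\partial}U$ because $\varphi^*(\xi)\in\Omega(f)$ and $f$ is continuous there. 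You immediately get that $g(R(\xi))$ lands at some $\eta$, so $g^*(\xi)=\eta$, and $\varphi^*(\eta)=f(\varphi^*(\xi))$. This is a one-step proof; no cluster-set analysis is needed. (The lemma is imported from an external reference, so there is no in-paper proof to compare against; but this is the standard argument.)

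Your first route, by contrast, has a genuine gap, which you correctly anticipated. Once you know the cluster set $C$ of $t\mapsto g(t\xi)$ is a connected compact subset of $\partial\mathbb{D}$, establishing that $C$ is a singleton is \emph{not} something Lehto--Virtanen can do: Lehto--Virtanen (Thm.~\ref{thm-lehto-virtanen}) presupposes that the curve in $\mathbb{D}$ \emph{lands} at the boundary point, and that is exactly what you still have to prove. If $C$ were a nondegenerate arc, the curve $g(R(\xi))$ would oscillate and have \emph{no} sub-curve (restriction to a sub-interval $[t_0,1)$) landing at any interior point $\eta$ of $C$; so the step ``$\varphi^*(\eta)=f(\varphi^*(\xi))$ for $\eta\in\operatorname{int}C$'' is unavailable, and the Fatou--Riesz contradiction never gets off the ground. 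Filling this hole essentially amounts to re-deriving the Lindel\"of theorem quoted above (e.g., via Koebe's no-arcs lemma applied to a sequence of crosscuts of $\mathbb{D}$ extracted from $g(R(\xi))$ along which $\varphi$ is uniformly close to $f(\varphi^*(\xi))$), which is exactly what your second route lets you cite instead of re-prove.
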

In this section we show that one can go further and relate backward orbits for the radial extension of the inner function $ g^* $ with backward orbits for the boundary map $ f|_{\partial U} $. Moreover, we will show how the natural extensions of $ (\partial \mathbb{D}, \lambda, g^*) $ and $ (\partial U, \omega_U, f) $ are related. 

To do so, first we have to establish, in the spirit of Lemma \ref{lemma-radial-limit}, a relation between backward orbits for $ g^* $ and backward orbits for $ f $. More precisely, we prove that backward orbits associated to a well-defined sequence of inverse branches indeed commute by the Riemann map, as long as the radial limit at the initial point exists.
\begin{prop}{\bf (Backward orbits commute)}\label{prop-backwards-orbits-commute}
	Let $ \left\lbrace \xi_n\right\rbrace _n\subset\partial\mathbb{D} $ be a backward orbit for $ g^* $. Assume $ \varphi^*(\xi_0 )$ exists. Then, $ \varphi^*(\xi_n) $ exists for all $ n\geq 1 $ and \[f(\varphi^*(\xi_{n+1}))=\varphi^*(g^*(\xi_{n+1}))=\varphi^*(\xi_{n}).\]
\end{prop}
\begin{proof}
	We note that, using an inductive argument, it is enough to prove that, if 	$ \varphi^*(\xi_0 )$ exists and $ \xi_1\in\partial\mathbb{D} $ is such that $ g^*(\xi_1)=\xi_0 $, then $ \varphi^*(\xi_1) $ is well-defined, and \[f(\varphi^*(\xi_{1}))=\varphi^*(g^*(\xi_{1}))=\varphi^*(\xi_{0}).\] Let $ R_{\xi_0} $ be the radius at $ \xi_0$. Then, $\varphi (R_{\xi_0}) $ is a curve landing  at $ \varphi^*(\xi_0) $, and there is a curve $ \gamma $ landing at $ \xi_1 $, with $ g(\gamma)= R_{\xi_0}$. Then, $ f(\varphi (\gamma))=\varphi(R_{\xi_0}) $ is a curve landing at  $ \varphi^*(\xi_0 )$. Since preimages of a point under a holomorphic map are discrete and the singularities of $ f $ are countable, $ \varphi (\gamma) $ lands at a point on $ \widehat{\partial} U $, which, by Lindelöf's theorem (see e.g. \cite[Thm. 2.2]{CarlesonGamelin}) coincides with $ \varphi^*(\xi_1) $ (which in particular is well-defined and satisfies $ f(\varphi^*(\xi_{1}))=\varphi^*(\xi_{0}) $).
\end{proof}

We are interested now in the interplay between the backward orbits for the associated inner function $ g $, and the backward orbits for $ f $ in the dynamical plane. 
According to Section \ref{subsect-Rokhlin's-natural-ext}, we can consider the natural extension $ (\widetilde{\partial\mathbb{D}}, \widetilde{\lambda}, \widetilde{g^*}) $ of $ (\partial\mathbb{D},  \lambda, g^*) $, given by the projecting morphisms $ \left\lbrace \pi_{\mathbb{D},n}\right\rbrace _n $, and the natural extension $ (\widetilde{\partial U}, \widetilde{\omega_U}, \widetilde{f}) $ of $ 	(\partial U,  \omega_U, f) $, given by the projecting morphisms $ \left\lbrace \pi_{U,n}\right\rbrace _n $. 
We are interested in relating both natural extensions.

In views of Proposition \ref{prop-backwards-orbits-commute}, it is clear that the  transformation 
\begin{align*}
	\widetilde{\varphi^*}&\colon\widetilde{\partial\mathbb{D}}\longrightarrow\widetilde{\partial U}
\\
&\left\lbrace \xi_n\right\rbrace _n\mapsto 	\left\lbrace \varphi^*(\xi_n)\right\rbrace _n
\end{align*}
is well-defined, and the following diagram commutes almost everywhere. 

\[\begin{tikzcd}[row sep=-0.2em,/tikz/row 2/.style={row sep=4em},/tikz/row 4/.style={row sep=4em},/tikz/row 6/.style={row sep=4em}]
	...\arrow{r}{\widetilde{g}} &	\widetilde{\partial \mathbb{D}} \arrow{rr}{\widetilde{g}} & &	\widetilde{\partial \mathbb{D}} \arrow{rr}{\widetilde{g}} & &	\widetilde{\partial \mathbb{D}}\arrow{r}{\widetilde{g}} &...\\ 
	&	{\scriptstyle \left\lbrace \xi_{n+2}\right\rbrace _n}\arrow[ddddd, shift right=3, bend right=12, swap, "\widetilde{\varphi^*}"] \arrow{d}{\pi_{\mathbb{D},n}}& &{\scriptstyle \left\lbrace \xi_{n+1}\right\rbrace _n}\arrow[ddddd, shift right=3, bend right=12, swap, "\widetilde{\varphi^*}"] \arrow{d}{\pi_{\mathbb{D},n}}& &	{\scriptstyle \left\lbrace \xi_{n}\right\rbrace _n }\arrow[ddddd, shift right=3, bend right=12, swap, "\widetilde{\varphi^*}"]\arrow{d}{\pi_{\mathbb{D},n}} \\ ...\arrow{r}{g} &\partial \mathbb{D}\arrow{rr}{g}  &&
	\partial \mathbb{D} \arrow{rr}{g} && \partial \mathbb{D}\arrow{r}{g} &...\\ &{\scriptstyle \xi_{n+2} }\arrow{d}{\varphi^*} &&{\scriptstyle \xi_{n+1} }\arrow{d}{\varphi^*} &&{\scriptstyle \xi_n}\arrow{d}{\varphi^*} 
	\\ ...\arrow{r}{f} &\partial U \arrow{rr}{f}  &&
	\partial U\arrow{rr}{f}  && \partial U \arrow{r}{f} &...\\ &{\scriptstyle \varphi^*(\xi_{n+2}) }&&{\scriptstyle \varphi^*(\xi_{n+1}) }&&{\scriptstyle \varphi^*(\xi_n)}
	\\ ...\arrow{r}{\widetilde{f}} &\widetilde{\partial U} \arrow{rr}{\widetilde{f}}\arrow[swap]{u}{\pi_{U,n}}   &&
	\widetilde{\partial U}\arrow{rr}{\widetilde{f}}\arrow[swap]{u}{\pi_{U,n}}  && \widetilde{\partial U} \arrow{r}{\widetilde{f}}\arrow[swap]{u}{\pi_{U,n}}  &...\\ &{\scriptstyle \left\lbrace \varphi^*(\xi_{n+2})\right\rbrace _n }&&{\scriptstyle \left\lbrace \varphi^*(\xi_{n+1})\right\rbrace _n }&&{\scriptstyle \left\lbrace \varphi^*(\xi_n)\right\rbrace _n}
\end{tikzcd}
\]

Now we claim that $ \widetilde{\varphi^*} $ is measure-preserving. Indeed, one may take a basis for the $ \sigma $-algebra in $ \widetilde{\partial U} $ made of sets of the form $ \pi^{-1}_{U,n} (A) $, where $ A\subset \partial U $ measurable, and $ n\geq 0 $.  It is enough to prove that $ 	\widetilde{\varphi^*} $ preserves the measure of these sets. Indeed, using that  $ \varphi^*\circ \pi_{\mathbb{D}, n}=  \pi_{{U}, n}\circ\widetilde{\varphi^*} $ $ \widetilde{\lambda} $-almost everywhere, we have
\[\widetilde{\omega_U}(\pi^{-1}_{U,n} (A))=\omega_U (A)=\lambda (\varphi^*(A))=\widetilde{\lambda}(\pi^{-1}_{\mathbb{D},n}\circ (\varphi^*)^{-1}(A))= \widetilde{\lambda}( (\widetilde{\varphi^*})^{-1}\circ\pi^{-1}_{U,n}(A)),\]where $ A\subset \partial U $ measurable, and $ n\geq 0 $, as desired. 
In other words, $ \widetilde{\omega_U} $ is the push-forward of $ \widetilde{\lambda} $ by $ \widetilde{\varphi^*} $.

Hence,  the following diagram
\[\begin{tikzcd}[column sep = large, row sep = large]
	(\partial\mathbb{D}, \lambda, g^*)  \arrow[swap]{d}{\varphi^*}& (\widetilde{\partial\mathbb{D}}, \widetilde{\lambda}, \widetilde{g^*})\arrow{d}{\widetilde{\varphi^*}} \arrow[swap]{l}{\left\lbrace \pi_{\mathbb{D},n}\right\rbrace _n} \\	
	(\partial U, \omega_U, f) & (\widetilde{\partial U},  \widetilde{\omega_U}, \widetilde{f})\arrow[swap]{l}{\left\lbrace \pi_{U,n}\right\rbrace _n} .
\end{tikzcd}
\] commutes almost everywhere.

\begin{prop}{\bf (Generic inverse branches commute)}\label{prop-generic-inverse-branches}
	Let $ f\in \mathbb{K}$, and let $U$ be an invariant simply connected Fatou component for $ f $. Let $ \varphi\colon\mathbb{D}\to U $ be a Riemann map, and let $ g\colon\mathbb{D}\to\mathbb{D} $ be the  inner function  associated to $ (f, U) $ by $ \varphi $. Assume the following conditions are satisfied.
	\begin{enumerate}[label={\em (\alph*)}]
		\item For $ \widetilde{\omega_U }$-almost every  backward orbit $ \left\lbrace x_n\right\rbrace _n \subset\partial U$, there exists $ r>0 $ such that the inverse branch $ F_n $ sending $x_0 $ to $ x_n $ is well-defined in $ D(x_0,r) $.
		\item For $ \widetilde{\lambda} $-almost every backward orbit $ \left\lbrace \xi_n\right\rbrace _n\subset\partial\mathbb{D} $  there exists $ \rho>0 $ such that the inverse branch $ G_n $ sending $\xi_0 $ to $ \xi_n $ is well-defined in $ D(\xi_0,\rho) $.
	\end{enumerate}
	Then, for $ \widetilde{\lambda} $-almost every backward orbit $ \left\lbrace \xi_n\right\rbrace _n\subset\partial\mathbb{D} $  there exists $ \rho, r>0 $ such that the inverse branch $ G_n $ sending $\xi_0 $ to $ \xi_n $ is well-defined in $ D(\xi_0,\rho) $, the inverse branch $ F_n $ sending $\varphi^*(\xi_0) $ to $ \varphi^*(\xi_n)  $ is well-defined in $ D(\varphi^*(\xi_0) ,r) $, and $ \varphi^*\circ G_n (\xi_0)=F_n\circ \varphi^*(\xi_0) $, for all $ n\geq 0 $.
\end{prop}

We note that, if $ \widetilde{\omega_U }$-almost every  backward orbit $ \left\lbrace x_n\right\rbrace _n \subset\partial U$ satisfies an additional property (such as the ones proved in \ref{teo:A}), then it is straightforward to see that, for $ \widetilde{\lambda} $-almost every backward orbit $ \left\lbrace \xi_n\right\rbrace _n\subset\partial\mathbb{D} $, the backward orbit $ \left\lbrace x_n\coloneqq \varphi^*(\xi_n)\right\rbrace _n $ satisfies this additional property.

\begin{proof}[Proof of Proposition \ref{prop-generic-inverse-branches}]
	The proof follows directly from the previous construction. Indeed, one can write the first assumption as: for $ \widetilde{\lambda} $-almost every backward orbit $ \left\lbrace \xi_n\right\rbrace _n\subset\partial\mathbb{D} $, there exists $ r>0 $ such that the inverse branch $ F_n $ sending $\varphi^*(\xi_0) $ to $ \varphi^*(\xi_n )$ is well-defined in $ D(\varphi^*(\xi_0),r) $. Since the intersection of sets of full measure has full measure, we have that inverse branches $ G_n $ and $ F_n $ are well-defined along the backward orbit of $ \xi_0 $ and $ \varphi^*(\xi_0) $.
	By Proposition \ref{prop-backwards-orbits-commute}, such inverse branches commute.
\end{proof}

\begin{remark*}\label{remark-phi-star}
	It follows from the previous construction that in Theorems A and B one can find first the backward orbit $ \left\lbrace \xi_n\right\rbrace _n\subset\partial\mathbb{D} $ and define the backward orbit in the dynamical plane as their image by $ \varphi^* $. Moreover, one can choose a countable collection of sets $ \left\lbrace K_k\right\rbrace _k\subset\partial\mathbb{D} $ and ask that there exists a sequence $ n_k\to \infty $ with $ \xi_{n_k}\in K_k $.
\end{remark*}
\section{Application: periodic boundary points. Corollary \ref{corol:C}}\label{sect-periodic-points}

One application of Pesin theory in holomorphic dynamics is to prove that for some invariant Fatou components, periodic points are dense in their boundary. This was done in the seminal paper of Przytycki and Zdunik \cite{Przytycki-Zdunik} for simply connected attracting basins of rational maps (note that in this paper it is proved that periodic points are dense in the boundary of {\em every} attracting or parabolic basin of a rational map, but the proof relies on a different technique).  In the spirit of \cite{Jove}, we aim to prove a similar result for transcendental maps.

The goal in this section is to prove Corollary \ref{corol:C}, which states that, under the hypotheses of either Theorem \ref{teo:A} or \ref{teo:B}, plus an extra hyptothesis on the singular values in $ U $, accessible periodic boundary points are dense. 

In view of the theory developed in the previous sections based on working in the space of backward orbits given by Rokhlin's natural extension, we shall formulate an alternative (and more natural) version of \ref{corol:C}, in terms of backward orbits.
\begin{thm}{\bf (Periodic points are dense)}\label{thm-punts-periòdics}
	Let $ f\in \mathbb{K}$, and let $U$ be an invariant simply connected Fatou component for $ f $. Let $ \varphi\colon\mathbb{D}\to U $ be a Riemann map, and let $ g\colon\mathbb{D}\to\mathbb{D} $ be the  inner function  associated to $ (f, U) $ by $ \varphi $. Assume the following conditions are satisfied.
\begin{enumerate}[label={\em (\alph*)}]
	\item For $ \widetilde{\omega_U }$-almost every  backward orbit $ \left\lbrace x_n\right\rbrace _n \subset\partial U$, there exists $ r>0 $ such that the inverse branch $ F_n $ sending $x_0 $ to $ x_n $ is well-defined in $ D(x_0,r) $, for every subsequence $\left\lbrace x_{n_j}\right\rbrace _j$ with $ x_{n_j}\in D(x_0, r) $, 
	$ \textrm{\em diam } F_{n_j}(D(x_0, r))\to 0$, as $ j\to\infty $.
	\item For $ \widetilde{\lambda} $-almost every backward orbit $ \left\lbrace \xi_n\right\rbrace _n\subset\partial\mathbb{D} $  there exists $ \rho>0 $ such that the inverse branch $ G_n $ sending $\xi_0 $ to $ \xi_n $ is well-defined in $ D(\xi_0,\rho) $.
\end{enumerate}
Then, accessible periodic points are dense in $ \partial U $.
\end{thm}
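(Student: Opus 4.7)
The strategy is to produce, near every prescribed boundary point in a dense subset of $\partial U$, a sequence of accessible repelling periodic points arising as attracting fixed points of the Pesin inverse branches from hypothesis (a). Combining (a) and (b) through Proposition \ref{prop-generic-inverse-branches}, one obtains a set of full $\widetilde{\lambda}$-measure of backward orbits $\{\xi_n\}_n\subset\partial\mathbb{D}$ such that the branches $G_n$ are defined in some $D(\xi_0,\rho)$, the points $x_n\coloneqq\varphi^*(\xi_n)\in\partial U$ form a backward orbit of $f$ satisfying the Pesin-type condition (a) on some $D(x_0,r)$, and the diagram $F_n\circ\varphi^*=\varphi^*\circ G_n$ commutes. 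Since $g^*|_{\partial\mathbb{D}}$ and $f|_{\partial U}$ are ergodic (and recurrent, or recurrent through a $\sigma$-finite invariant measure in the parabolic/Baker case; see Theorem \ref{thm-ergodic-boundary-map}, Corollary \ref{corol-invariant-measures}, Proposition \ref{prop-ergodic-properties-natural-ext}), Corollary \ref{prop-ergodic-properties-natural-ext2} ensures that such backward orbits visit every set of positive measure infinitely often.

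Applying this to a countable basis of balls (and using that $\operatorname{supp}\omega_U=\partial U$ by Lemma \ref{lemma_support_harmonic_measure}), one deduces that for a.e. backward orbit $\{x_n\}_n$ produced above, there is a subsequence $n_j\to\infty$ with $x_{n_j}\to x_0$; in particular $x_{n_j}\in D(x_0,r/4)$ for large $j$. Hypothesis (a) then forces $\operatorname{diam}F_{n_j}(D(x_0,r))\to 0$, so for $j$ large the image is contained in a tiny set around $x_{n_j}$, hence in $D(x_0,r)$. Since $F_{n_j}$ is univalent on $D(x_0,r)$, the Schwarz-Pick lemma (applied after uniformizing $D(x_0,r)$ to $\mathbb{D}$) yields a unique attracting fixed point $p_{n_j}\in D(x_0,r)$, which is a repelling periodic point of $f$ of period $n_j$ because $|(f^{n_j})'(p_{n_j})|=|F'_{n_j}(p_{n_j})|^{-1}>1$. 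The estimate $|p_{n_j}-x_{n_j}|\le \operatorname{diam}F_{n_j}(D(x_0,r))\to 0$ gives $p_{n_j}\to x_0$.

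It remains to show that $p_{n_j}$ lies in $\partial U$ and is accessible from $U$. The $f$-invariance of $U$ implies that the branch $F_{n_j}$, which sends the boundary point $x_0$ to the boundary point $x_{n_j}$, maps $U\cap D(x_0,r)$ into $U$ and $\partial U\cap D(x_0,r)$ into $\partial U$; iterating $F_{n_j}$ on $x_0$ produces a sequence in $\partial U$ converging to $p_{n_j}$, whence $p_{n_j}\in\partial U$ by closedness, and iterating on any $\zeta\in U\cap D(x_0,r)$ produces a sequence in $U$ converging to $p_{n_j}$, giving a curve in $U$ landing at $p_{n_j}$ (accessibility). Since the set of good $x_0$ has full $\omega_U$-measure and $\operatorname{supp}\omega_U=\partial U$, this set is dense in $\partial U$, and each of its points is a limit of accessible repelling periodic points, concluding the proof.

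\textbf{Main obstacle.} The subtlest part is the simultaneous realization in Step 4: verifying that the abstract attracting fixed point $p_{n_j}$ of the holomorphic contraction $F_{n_j}$ actually falls on $\partial U$ rather than elsewhere in $\mathcal{J}(f)\cap D(x_0,r)$, and that it is accessible from inside $U$. This is where the precise invariance of $\partial U$ under the correct branch of $f^{-n_j}$ (the one tracking the chosen backward orbit in $\partial U$) is used; hypothesis (b) enters at this stage implicitly through Proposition \ref{prop-generic-inverse-branches}, which guarantees that the backward orbits used are simultaneously generic on both sides of the Riemann map $\varphi$, so the inverse branches $F_{n_j}$ are the ones induced by $G_{n_j}$ and therefore respect the decomposition of $D(x_0,r)$ into $U$, $\partial U$, and the exterior of $\overline{U}$ along the backward orbit.
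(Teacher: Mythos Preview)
Your overall strategy is the same as the paper's: find, near a generic boundary point, a Pesin inverse branch $F_m$ that maps $D(x_0,r)$ into itself, obtain a repelling periodic point $p$ as its Denjoy--Wolff fixed point, and show $p$ is accessible from $U$. The construction of $p$ itself is fine.

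The genuine gap is in your accessibility argument. You write that iterating $F_{n_j}$ on some $\zeta\in U\cap D(x_0,r)$ ``produces a sequence in $U$ converging to $p_{n_j}$, giving a curve in $U$ landing at $p_{n_j}$''. A convergent sequence is not a curve. To turn it into one you would need to join consecutive iterates $F_{n_j}^k(\zeta)$ and $F_{n_j}^{k+1}(\zeta)$ by arcs lying in $U$ \emph{and} inside $D(x_0,r)$ (so that further application of $F_{n_j}$ is legitimate and the diameters shrink). Since $\partial U$ may be highly irregular, there is no reason a priori that such arcs exist; indeed $\zeta$ and $F_{n_j}(\zeta)$ may lie in different connected components of $U\cap D(x_0,r)$. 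A related issue is your claim that $F_{n_j}(U\cap D(x_0,r))\subset U$: the specific branch $F_{n_j}$ was chosen to send $x_0\in\partial U$ to $x_{n_j}\in\partial U$, but $U$ is not backward invariant, so on components of $U\cap D(x_0,r)$ other than the one ``seen'' through $\varphi$ at $\xi_0$, the image may land in a preperiodic preimage of $U$ rather than in $U$ itself.

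The paper resolves exactly this point, and this is where hypothesis (b) is genuinely used --- not merely to commute diagrams. On the $\mathbb{D}$-side one arranges (via Lebesgue density and Proposition~\ref{prop-radial-limits}) that the return time $m$ satisfies $G_m(R_{\rho_m}(\xi_0))\subset\Delta_{\rho_m}(\xi_m)$ and that the Stolz angles $\Delta_\rho(\xi_0)$ and $\Delta_\rho(\xi_m)$ overlap, with $\varphi$ mapping their union into $D(x_0,r)$. This yields a curve $\gamma\subset\mathbb{D}$ joining a point $z$ to $G_m(z)$; then $\varphi(\gamma)\subset U\cap D(x_0,r)$ joins $\varphi(z)$ to $F_m(\varphi(z))$ (using $\varphi\circ G_m=F_m\circ\varphi$ on the Stolz angle). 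Now $\bigcup_{k\ge 0}F_m^k(\varphi(\gamma))$ is a genuine curve in $U$ landing at $p$: consecutive pieces overlap, each lies in $D(x_0,r)$ by the contraction, and an easy induction (each piece is connected, lies in $\mathcal F(f)$, and shares a point with $U$) shows each piece lies in $U$. Note that this requires $\xi_m$ close to $\xi_0$ in $\partial\mathbb{D}$, which is strictly stronger than $x_m$ close to $x_0$ in $\partial U$; your outline only arranges the latter.
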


However, we aim to give a proof of the density of periodic boundary points which does not use Rohklin's natural extension. To do so, we state Theorem \ref{thm-punts-periòdics} in a slightly different (and stronger) way.

\begin{thm}{\bf (Periodic points are dense)}\label{thm-punts-periòdics2}
	Let $ f\in \mathbb{K}$, and let $U$ be an invariant simply connected Fatou component for $ f $. Let $ \varphi\colon\mathbb{D}\to U $ be a Riemann map, and let $ g\colon\mathbb{D}\to\mathbb{D} $ be the  inner function  associated to $ (f, U) $ by $ \varphi $. Assume that for every countable sequence of measurable sets $ \left\lbrace K_k\right\rbrace _k\subset\partial\mathbb{D} $ with $ \lambda(K_k)>0 $ and  $ \lambda $-almost every $ \xi\in\partial\mathbb{D}$, there exists a backward orbit $ \left\lbrace \xi_n\right\rbrace _n\subset \partial\mathbb{D} $, such that 
	\begin{enumerate}[label={\em (\alph*)}]
		\item $ \xi=\xi_0 $ and there exists $ \rho>0 $ such that the inverse branch $ G_n $ sending $\xi_0 $ to $ \xi_n $ is well-defined in $ D(\xi_0,\rho) $, and there exists $ n_k\to \infty $ with $ \xi_{n_k}\in K_k $;
		\item for the backward orbit $ \left\lbrace x_n\coloneqq \varphi^*(\xi_n)\right\rbrace _n \subset\partial U$, there exists $ r>0 $ such that the inverse branch $ F_n $ sending $x_0 $ to $ x_n $ is well-defined in $ D(x_0,r) $, for every subsequence $\left\lbrace x_{n_j}\right\rbrace _j$ with $ x_{n_j}\in D(x_0, r) $, 
		$ \textrm{\em diam } F_{n_j}(D(x_0, r))\to 0$, as $ j\to\infty $.
	\end{enumerate}
	Then, accessible periodic points are dense on $ \partial U $.
\end{thm}

According to Proposition \ref{prop-generic-inverse-branches}, it is clear that Theorem \ref{thm-punts-periòdics2} implies \ref{thm-punts-periòdics}. We show now how to deduce Corollary \ref{corol:C} from Theorem \ref{thm-punts-periòdics}, and later we give the proof of  Theorem \ref{thm-punts-periòdics2}.

\begin{proof}[Proof of Corollary \ref{corol:C}]
	On the one hand, it is clear that, by the conclusion of \ref{teo:A} and \ref{teo:B}, the second requirement of Theorem \ref{thm-punts-periòdics} holds.
	
	 On the other hand, we have to see the assumption of the existence of a crosscut neighbourhood $ N_C $ in $ U $ with $ N_C\cap P(f)=\emptyset $ implies (b). Indeed, $ \varphi^{-1}(N_C) $ is a crosscut neighbourhood in $ \mathbb{D} $ which contains no postsingular value for the inner function. Since $ g^*|_{\partial\mathbb{D}} $ is ergodic and recurrent, for $ \lambda $-almost $ \xi\in \partial\mathbb{D} $, there exists $ \rho>0 $ such that, for all $ n\geq 0 $, all inverse branches of $ g^n $ are well-defined in $ D(\xi, \rho) $. Denote this set of backward orbits by $ \widetilde{A} $. We have to show that $ \widetilde{A} $ has full $ \widetilde{\lambda} $-measure in $ \widetilde{\partial\mathbb{D}} $. Indeed, note that
	 $\widetilde{A}=\pi_{\mathbb{D},0}^{-1}(\pi_{\mathbb{D}, 0}(\widetilde{A}))$, since the set $ \widetilde{A} $ is made of {\em all} backward orbit with initial point in $ \pi_{\mathbb{D}, 0}(\widetilde{A})$. Since $ \lambda(\pi_{\mathbb{D}, 0}(\widetilde{A}))=1$
and $ \pi_{\mathbb{D}, 0} $ is measure-preserving,	  this already implies the requirement (b) in Theorem \ref{thm-punts-periòdics}. 
\end{proof}

\subsection{Proof of Theorem \ref{thm-punts-periòdics2}}
Let $ x\in\partial U $ and $ R>0 $, we have to show that $ f $ has a repelling periodic point in $ D(x, R)\cap\partial U $, which is accessible from $ U $. 

We split the proof in several intermediate lemmas. We start by proving the existence of a backward orbit $ \left\lbrace \xi_n\right\rbrace _n \subset \partial\mathbb{D}$ such that for both $ \left\lbrace \xi_n\right\rbrace _n$ and $ \left\lbrace \varphi^*(\xi_n)\right\rbrace _n$ the corresponding inverse branches are well-defined (and conformal), and certain estimates on the contraction are achieved.

In the sequel, we fix $ \alpha\in (0, \pi/2) $, and we take all Stolz angles of opening $  \alpha$, although in the notation we omit the dependence.

\begin{sublemma}\label{sublemma-puntperiodic1}
	There exists a backward orbit $ \left\lbrace \xi_n\right\rbrace _n \subset \partial\mathbb{D}$, and constants $ m\in\mathbb{N} $, $ 0<\rho_m\leq  \rho $, and $ r\in (0, R/2) $  such that:
	\begin{enumerate}[label={\em (1.\arabic*)}]
		\item\label{item-1} $ x_0\coloneqq \varphi^* (\xi_0)$ and $ x_m\coloneqq \varphi^* (\xi_m)$ are well-defined, and $ x_0\in D(x, R/2) $ and $ x_m\in D(x_0, r/3) $;
		\item\label{item-2}  the inverse branch $ F_m $ of $ f^m $ sending $ x_0 $ to $ x_m $ is well-defined in $ D(x_0, r) $, and 
$ \textrm{\em diam } F_m(D(x_0, r))<r/3 $;
\item \label{item-3} the inverse branch $ G_m $ of $ g^m $ sending $ \xi_0 $ to $ \xi_m $ is well-defined in $ D(\xi_0, \rho_m) $, and satisfies \[  G_m(R_{\rho_m}(\xi_0))\subset \Delta_{ \rho_m}(\xi_m);\] 
	\item \label{item-4} $ \Delta_{\rho}(\xi_0)\cap  \Delta_{ \rho}(\xi_m)\neq\emptyset$, and, if $ z\in\Delta_{ \rho} (\xi_0)\cup \Delta_{ \rho} (\xi_m)$,  then $ \varphi(z)\in D(x_0, r) $.
	\end{enumerate}
\end{sublemma}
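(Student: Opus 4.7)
The plan is to produce $\xi_0$ as a ``good'' Lebesgue density point in $\partial\mathbb{D}$ with $\varphi^*(\xi_0) = x_0 \in D(x, R/2)$, and then apply the hypothesis of Theorem \ref{thm-punts-periòdics2} with a cleverly parameterized countable family of positive-measure sets. The key is to ensure that the backward orbit $\{\xi_n\}_n$ produced by the hypothesis contains some index $m$ at which, simultaneously, (i) $\xi_m$ is close to $\xi_0$ on $\partial\mathbb{D}$, (ii) $x_m := \varphi^*(\xi_m)$ is close to $x_0$, and (iii) $\varphi$ maps a Stolz angle of uniformly controlled size at $\xi_m$ into a small neighborhood of $x_0$. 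The technical ingredients are Lebesgue density (Proposition \ref{thm-Lebesgue-density}) together with the Lehto--Virtanen Theorem \ref{thm-lehto-virtanen}.

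For each pair $(s, \ell) \in \mathbb{Q}_{>0} \times \mathbb{N}$, I would introduce the set
\[W^s_\ell := \varphi^{*-1}(D(x_0, s)) \cap \bigl\{\xi \in \partial\mathbb{D} : \varphi(\Delta_{\alpha, 1/\ell}(\xi)) \subset D(x_0, 2s)\bigr\},\]
observing via Lehto--Virtanen that $\bigcup_\ell W^s_\ell$ exhausts $\varphi^{*-1}(D(x_0, s))$ up to a $\lambda$-null set. I would then choose $\xi_0$ to satisfy the hypothesis of the theorem, to have $\varphi^*(\xi_0) \in D(x, R/2)$, and to be a Lebesgue density point of every $W^s_\ell$ that contains it---countably many almost-everywhere conditions, hence compatible. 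For each $s \in \mathbb{Q}_{>0}$ there is then $\ell(s) \in \mathbb{N}$ with $\xi_0 \in W^s_{\ell(s)}$.

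Next I would apply the hypothesis of Theorem \ref{thm-punts-periòdics2} to $\xi_0$ with the countable family $\mathcal{K} := \{D(\xi_0, 1/k) \cap W^s_\ell : k, \ell \in \mathbb{N},\, s \in \mathbb{Q}_{>0},\, \xi_0 \in W^s_\ell\}$, each of positive $\lambda$-measure by the density property, obtaining a backward orbit $\{\xi_n\}_n$ and radii $\rho_1, r_1 > 0$ as in (a) and (b). Setting $r := \min(r_1, R/2 - |x - x_0|)$, picking a rational $s_0 < r/3$, putting $\ell_0 := \ell(s_0)$ and $\rho := \min(\rho_1, 1/\ell_0)$, and taking $m$ to be the visit-index of the backward orbit to $D(\xi_0, 1/k) \cap W^{s_0}_{\ell_0}$ for $k$ sufficiently large, all four conditions can be verified directly: $x_m \in D(x_0, s_0) \subset D(x_0, r/3)$ is automatic because $\xi_m \in \varphi^{*-1}(D(x_0, s_0))$; the diameter bound in (1.2) holds for large $k$ by the contraction property (b); the overlap $\Delta_\rho(\xi_0) \cap \Delta_\rho(\xi_m) \neq \emptyset$ follows from $|\xi_m - \xi_0| < 1/k$ being small compared to $\rho$; and $\varphi(\Delta_\rho(\xi_0) \cup \Delta_\rho(\xi_m)) \subset D(x_0, 2s_0) \subset D(x_0, r)$ is immediate from $\xi_0, \xi_m \in W^{s_0}_{\ell_0}$ together with $\rho \leq 1/\ell_0$. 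Condition (1.3) is then obtained by applying Proposition \ref{prop-radial-limits} to the inner function $g^m$.

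The main obstacle I anticipate is the order of quantifiers: the countable family $\{K_k\}$ must be fixed before the hypothesis produces the radii $r_1, \rho_1$, while the natural conditions to impose on $\xi_m$ depend on those radii. The remedy is precisely to parameterize $\mathcal{K}$ over all rational $s$ and all integers $\ell$ simultaneously, so that once $r_1, \rho_1$ are revealed, admissible $s_0$ and $\ell_0$ can be selected retrospectively without any re-invocation of the hypothesis.
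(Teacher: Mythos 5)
Your proof has a quantifier gap that your acknowledged ``main obstacle'' misidentifies. The hypothesis of Theorem \ref{thm-punts-periòdics2} has the form \emph{for every} countable family $\{K_k\}_k$, \emph{for $\lambda$-a.e.} $\xi$; the exceptional null set depends on the family, so the family must be specified before $\xi_0$ is drawn. Your family $\mathcal{K}$ is doubly $\xi_0$-dependent: the sets $W^s_\ell$ are anchored at $x_0 = \varphi^*(\xi_0)$, and the arcs $D(\xi_0, 1/k)$ are anchored at $\xi_0$ itself. Feeding in a $\xi_0$-dependent family gives, for each candidate $\xi_0$, a different exceptional set, and there is no guarantee that any particular $\xi_0$ avoids its own. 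Parameterizing over rational $s$ and integer $\ell$ only handles the retrospective choice of radii once $r_1,\rho_1$ are revealed; it does nothing to remove the $\xi_0$-dependence of the anchors, which is the real obstruction.

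The missing ingredient is a $\xi_0$-free countable parameterization of the anchors. The paper takes a countable basis $\{A_n = D(x^n, r_n)\}$ of disks centered on $\partial U$ inside $D(x,R)$ (playing the role of your $D(x_0, s)$), for each $n$ finitely many circular intervals $I^n_1,\dots,I^n_{i_n}$ covering the density points of $K^n_{\rho_n}$ (playing the role of your $D(\xi_0, 1/k)$, and chosen so that any two Stolz angles based inside the same interval overlap), and a dyadic scale $\rho_n/2^j$ of Stolz lengths. This family is fixed once and for all, each member appears infinitely often in the enumeration, and $\lambda$-a.e. $\xi$ with $\varphi^*(\xi)\in D(x,R)$ lies in some member. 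With that correction, the rest of your argument --- choosing $\xi_0$ generically, reading off the right indices retrospectively once $r_1,\rho_1$ are known, and verifying (1.1)--(1.4) via Lehto--Virtanen, the contraction property of (b), and Proposition \ref{prop-radial-limits} --- goes through and is essentially the paper's proof.
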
	

\begin{proof}
Let $ A_n= D(x^n, r_n) $ be a countable basis for $ D(x, R) $ with the Euclidean topology, where $ x^n\in\partial U $ and $ A_n\subset D(x,R) $. 

In order to apply the hyptothesis of the theorem, we shall construct an appropriate countable sequence of measurable sets $ \left\lbrace K_k\right\rbrace _k $ of $ \partial\mathbb{D} $. We do it as follows.

For all $ n\geq 0 $, let 
\[ K^{n}=\left\lbrace \xi\in\partial\mathbb{D}\colon \varphi^*(\xi)\in D(x^n, {r_n}/{2}) \right\rbrace . \] It is clear that $ \lambda(K^{n})>0 $. By the Lehto-Virtanen Theorem, the angular limit exists whenever the radial limit exists. Therefore, there exists $ \rho_n>0 $ small enough so that
\[ K^n_{\rho_n}=\left\lbrace \xi\in K^n\colon \Delta_{\rho_n}(\xi)\subset D(x^n, {r_n}/{2}) \right\rbrace  \]
has positive $ \lambda $-measure. We can assume that every point in $ K^n_{\rho_n} $  is a Lebesgue density point  for $ K^n_{\rho_n} $. Then, if we take
$ \xi^n \in  K^n_{\rho_n} $, there exists a circular interval  $ I_{\xi^n} $ around $ \xi^n $ such that for any $ \zeta_1, \zeta_2\in I_{\xi^n}  $,
\[\Delta_{\rho_n}(\zeta_1)\cap  \Delta_{ \rho_n}(\zeta_2)\neq\emptyset.\] Then, $ K^n_{\rho_n} \cap  I_{\xi^n} $ has positive $ \lambda $-measure.
Note that this property only depends on the length of the interval, as long as $  \xi^n $ is a Lebesgue density point for $ K^n_{\rho_n} $.
Then, it is clear that there exist finitely many circular intervals $ I_1^n, \dots, I^n_{i_n} $ with this property. 

Let \[K^{1,n}_{*, i} \coloneqq K^n_{\rho_n} \cap  I^n_{i},\hspace{0.5cm} i=1, \dots, i_n,\]
\[
K^{1,n}_*\coloneqq\left\lbrace K^{1,n}_{*,1},\dots, K^{1,n}_{*,i_n}\right\rbrace.
\]
Then, we define the set $ K^{j,n}_*$, as before, but replacing $ \rho_n $ by $ \rho_n /2^j$.

Having introduced all this notation of the sets $ \left\lbrace K^{j,n}_* \right\rbrace _{n,j}$, we arrange the sequence $ \left\lbrace K_k\right\rbrace _k $ as follows. We construct this sequence of sets inductively, adding at each step finitely many sets.
Indeed, let us start by putting the block $ K^{1,1}_*\coloneqq\left\lbrace K^{1,1}_{*,1},\dots, K^{1,1}_{*,i_1}\right\rbrace $ as the first elements of the sequence. Then,
for the $ k $-th step of the induction, we consider $ A_{k} $ and let $ A_{k_1}, \dots,  A_{k_n}$ be all the sets of $ A_1,\dots, A_n $ such that $ A_n\subset A_{k_i} $. Then, we add to the sequence the blocks
\[K^{1,k_1}_*, \dots K^{1,k_n}_*, \dots, K^{k,k_1}_*, \dots, K^{k,k_n}_*.\]

 Basically, the idea is that, when one set is in the sequence $ \left\lbrace K_k\right\rbrace _k $ for the first time, then it appears infinitely often. Moreover, the set of points in $ \left\lbrace K_k\right\rbrace _k $ has  measure $ \lambda ((\varphi^*)^{-1}(D(x,R))) $. Indeed, the set of points in $ \partial\mathbb{D} $ for which the radial limit exists has full measure. Let $ \zeta $ be one of such points. Then, $ \varphi^*(\zeta) \in A_j$, for some $ j $, and for $ \rho>0 $ small enough, $ \Delta_\rho (\zeta)\subset A_j $. Then, there exists $ n\geq 0 $ such that $ A_n\subset A_j $ and $ \rho<\rho_j/2^n $, so $ \zeta \in K^n_{*, k_n}$, as desired.

By the assumption of the theorem, for $ \lambda $-almost every $ \xi_0\in\partial\mathbb{D} $, 
there exists a backward orbit $ \left\lbrace \xi_n\right\rbrace _n $ such that  the hypothesis on the definition of the inverse branches for $ \left\lbrace \xi_n\right\rbrace _n $ and $ \left\lbrace x_n\coloneqq \varphi^*(\xi_n)\right\rbrace _n $ are accomplished, and there exists $ n_k\to \infty  $ with $ \xi_{n_k}\in K_k $. 

Without loss of generality, we assume $ \xi_0 $ is chosen so that $ x_0\in D(x, R/2) $. Let $ r>0 $ be such that the inverse branches realizing the backward orbit $ \left\lbrace x_n\right\rbrace _n $ are well-defined in $ D(x_0,r) $. There is no loss of generality on assuming $ r\in (0, R/2) $. 

On the one hand, since $ \left\lbrace A_n \right\rbrace _n$ is a basis for $ D(x,R) $, there exists $ n_0 $ such that $$ x_0\in A_{n_0}\subset D(x_0, r/3) , $$ and  $ \xi_0 \in K^{n_0}_*$,  by the previous remark. In particular, for $ \rho_{n_0} $,  
\[\Delta_{\rho_{n_0}}(\xi_0)\subset A_{n_0}\subset D(x_0, r/3).\]
On the other hand, 
by the construction of the sets $ \left\lbrace K_n\right\rbrace _n  $, the backward orbit visits $ D(x_0,r) $ infinitely many times. Let $ n_1 $ be large enough so that, for all $ n\geq n_1 $, if $ x_n\in D(x_0, r) $, then $ \textrm{diam }F_n(D(x_0, r))<r/3  $. 

 By the construction of the sets $ \left\lbrace K_n\right\rbrace _n  $, there exists $ m\geq \max \left\lbrace n_0,  n_1\right\rbrace  $ such that $ \xi_m \in K^{n_0}_*$. Hence, we  take $ r>0 $, $ \rho=\rho_{n_0} $, and $ \xi_0 $ and $ \xi_m $ as above, and define $ \rho_m>0 $ as the radius such that the inverse branch $ G_m $ sending $ \xi_0 $ to $ x_m $ is defined around $ \xi_0 $ (such a radius exists by our assumptions on the orbit $ \left\lbrace \xi_n\right\rbrace _n $). We have to check that, with these choices, the requirements are satisfied.

First, by the choice of $ \left\lbrace \zeta_n\right\rbrace _n $, $ \varphi^*(\xi_0)\eqqcolon x_0 $ and $ \varphi^*(\xi_m)\eqqcolon x_m $ are well-defined. Moreover, by the choice of $ r $, we have $ x_0\in D(x, R/2) $. Since $ \xi_0, \xi_m \in K^{n_0}_*$, we have 
\[\Delta_{\rho}(\xi_0)\cap  \Delta_{ \rho}(\xi_n)\neq\emptyset,\]
and $ \Delta_{\rho}(\xi_0), \Delta_{\rho}(\xi_m)\subset A_{n_0}\subset D(x_0, r/3) $. In particular, $ x_m\in D(x_0, r/3) $, so (1.1) and (1.4) hold. To see (1.2), note that $ r $ has been chosen so that the inverse branches corresponding to $ \left\lbrace x_n\right\rbrace _n $ are well-defined in $ D(x_0,r) $, and $ m $ is large enough so that $ \textrm{diam }F_m(D(x_0, r))<r/3  $, as desired.
 Requirement (1.3)  is directly satisfied by the choice of $ \rho_m $. Therefore, we have proved the lemma.
\end{proof}

	\begin{figure}[htb!]\centering
	\includegraphics[width=16cm]{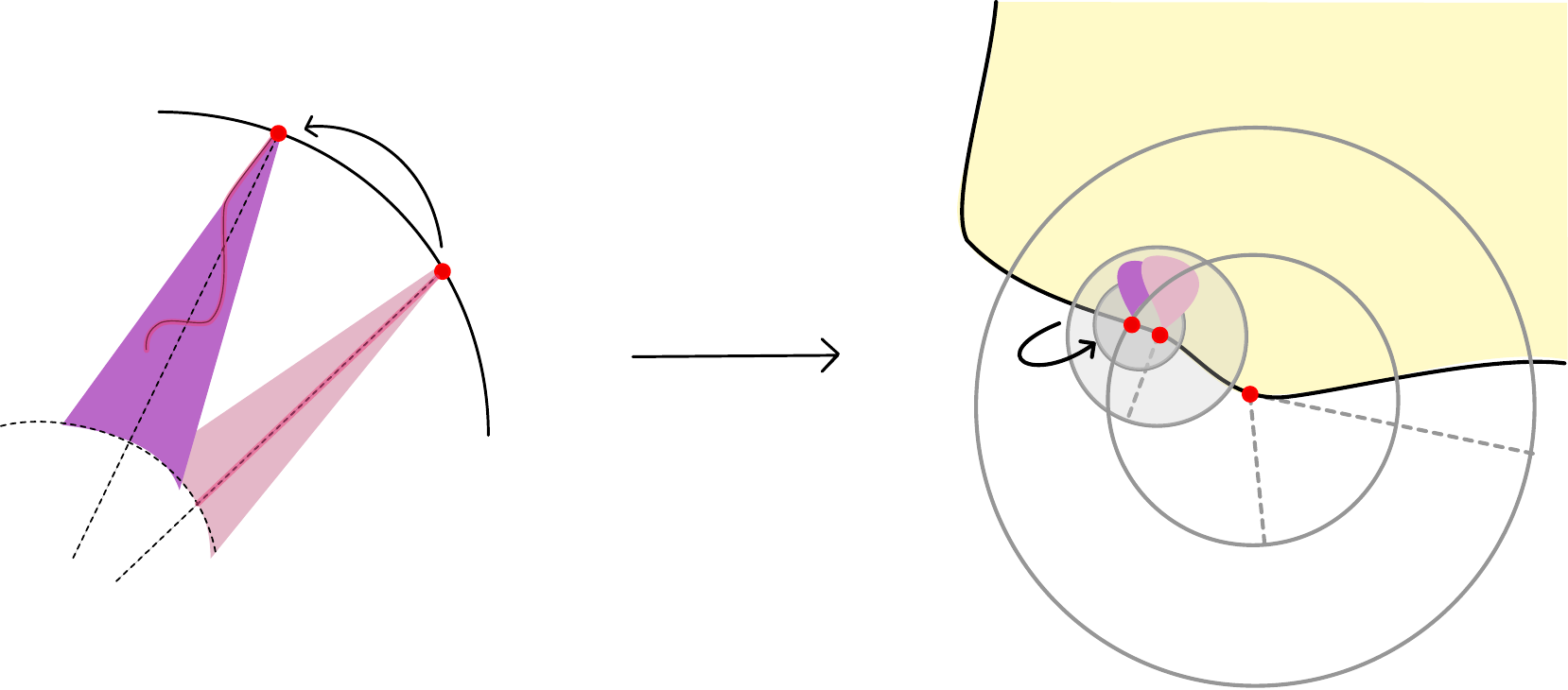}
	\setlength{\unitlength}{16cm}
     	\setlength{\unitlength}{16cm}
\put(-0.71, 0.27){$\xi_0$}
\put(-0.75, 0.35){$G_m$}
\put(-0.375, 0.2){$F_m$}
\put(-0.53, 0.225){$\varphi$}
\put(-0.7, 0.14){$\partial\mathbb{D}$}
\put(-0.4, 0.42){$\partial U$}
\put(-0.83, 0.365){$\xi_m$}
\put(-0.26, 0.23){\small$x_0$}
\put(-0.3, 0.22){\small$x_m$}
\put(-0.08, 0.135){$R$}
\put(-0.19, 0.105){\small $R/2$}
	\caption{\footnotesize Situation after Lemma \ref{sublemma-puntperiodic1}.}\label{fig-puntperiodic}
\end{figure}

Next we prove the existence of a repelling periodic point in $ D(x_0, r) $. Note that, since $ D(x_0, r) \subset D(x, R)$, the proof of the next lemma ends the proof of the theorem.
\begin{sublemma}\label{sublemma-punt-periodic2}
	The map $ F_m $ has an attracting fixed point in $ D(x_0, r) $ which is accessible from $ U $. Hence, $ f $ has a repelling $ m $-periodic point in $ D(x_0, r) \cap\partial U$.
\end{sublemma}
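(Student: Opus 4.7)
The plan is to apply a contraction argument to the inverse branch $F_m\colon D(x_0,r)\to D(x_0,r)$. By (1.1)–(1.2), $F_m(D(x_0,r))$ has diameter less than $r/3$ and contains $x_m\in D(x_0,r/3)$, hence $F_m(D(x_0,r))\subset D(x_0,2r/3)\Subset D(x_0,r)$. Thus $F_m$ is a holomorphic self-map of the disk $D(x_0,r)$ whose image is relatively compact, so by the Schwarz--Pick lemma $F_m$ has a unique attracting fixed point $p\in D(x_0,2r/3)$ and $F_m^n(z)\to p$ for every $z\in D(x_0,r)$. Since $f^m\circ F_m=\mathrm{id}$ on $D(x_0,r)$, the point $p$ is $m$-periodic for $f$, and because $|F_m'(p)|<1$ we get $|(f^m)'(p)|>1$, so $p$ is repelling.

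The first nontrivial step is to show $p\in\partial U$. For this I would check that $F_m$ restricts to a self-map of $D(x_0,r)\cap\partial U$: since $f^m(\partial U)\subset\partial U$ and $F_m$ is a conformal inverse of $f^m$ on $F_m(D(x_0,r))$, the map $f^m$ sends $F_m(D(x_0,r))\cap\partial U$ bijectively onto $D(x_0,r)\cap\partial U$, so inverting yields $F_m(D(x_0,r)\cap\partial U)=F_m(D(x_0,r))\cap\partial U\subset D(x_0,r)\cap\partial U$. In particular $F_m^n(x_0)\in\partial U$ for all $n$, and because this sequence converges to $p$ and $\partial U$ is closed, $p\in\partial U$.

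For accessibility, I would exploit (1.4) to locate a component of $D(x_0,r)\cap U$ that is invariant under $F_m$. Because $\Delta_\rho(\xi_0)\cap\Delta_\rho(\xi_m)\neq\emptyset$ and $\varphi$ is injective, $\varphi(\Delta_\rho(\xi_0))$ and $\varphi(\Delta_\rho(\xi_m))$ lie in a common connected component $V$ of $D(x_0,r)\cap U$. Using the conjugacy $F_m\circ\varphi=\varphi\circ G_m$ together with (1.3), one obtains $F_m(\varphi(R_{\rho_m}(\xi_0)))\subset\varphi(\Delta_{\rho_m}(\xi_m))\subset V$. Since $F_m(V)$ is connected, misses $\partial U$ (by the same argument as for $p\in\partial U$, applied fiberwise), and meets $V$, we conclude $F_m(V)\subset V$. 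Choosing any $y_0\in\varphi(R_{\rho_m}(\xi_0))\subset V$ and any arc $\beta\subset V$ from $y_0$ to $F_m(y_0)$, the concatenation $\beta\cup F_m(\beta)\cup F_m^2(\beta)\cup\cdots$ is a curve in $V\subset U$ along which $F_m^n(y_0)\to p$, because $\operatorname{diam} F_m^n(D(x_0,r))\to 0$ under the contraction. This curve exhibits $p$ as an accessible boundary point.

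The main obstacle I expect is the inclusion $F_m(V)\subset V$: without the geometric information provided by (1.3) and (1.4), $F_m$ could a priori carry the relevant component of $U\cap D(x_0,r)$ onto a different component, or even into the boundary of another Fatou component of $f^{-m}(U)$, and the concatenation argument for the landing curve would fail. The Stolz-angle estimates transferred from Corollary C, together with the overlap of the Stolz angles at $\xi_0$ and $\xi_m$, are precisely what rules this out and glue the iterates into a single accessible arc.
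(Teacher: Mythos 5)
Your argument follows the same route as the paper: establish $F_m(D(x_0,r))\Subset D(x_0,r)$, obtain the attracting fixed point $p$ by Schwarz--Pick (the paper invokes Denjoy--Wolff), and then construct an $F_m$-invariant arc in $U$ landing at $p$ by combining (1.3) and (1.4). The contraction estimate, the derivative computation $|(f^m)'(p)|=1/|F_m'(p)|>1$, and the landing-arc construction are all fine.

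However, the intermediate step purporting to show $p\in\partial U$ directly does not hold up. You assert that $f^m$ sends $F_m(D(x_0,r))\cap\partial U$ \emph{bijectively} onto $D(x_0,r)\cap\partial U$ and then invert. But conformality of $f^m$ on $F_m(D(x_0,r))$ together with $f^m(\partial U)\subset\partial U$ only gives an injection $f^m\colon F_m(D(x_0,r))\cap\partial U\hookrightarrow D(x_0,r)\cap\partial U$; surjectivity is precisely the statement $F_m\bigl(D(x_0,r)\cap\partial U\bigr)\subset\partial U$ that you are trying to prove, and it can fail a priori --- $F_m$ could carry a point of $\partial U$ to the boundary of a different component of $f^{-m}(U)$. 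Fortunately this step is superfluous: once the landing arc in $U$ is built, $p\in\overline U$, and since $p$ is a repelling periodic point, $p\in\mathcal J(f)$, whence $p\in\overline U\cap\mathcal J(f)=\partial U$. This is how the paper concludes. Relatedly, your justification that ``$F_m(V)$ misses $\partial U$ by the same argument applied fiberwise'' is circular; the clean argument is that $F_m(V)$ is a connected open subset of $f^{-m}(U)$ that meets $U$ (via $F_m(\varphi(R_{\rho_m}(\xi_0)))\subset V$), and $U$ is the connected component of $f^{-m}(U)$ containing $U$ (being maximal among connected open subsets of the Fatou set), so $F_m(V)\subset U$; then $F_m(V)\subset D(x_0,r)$ meets $V$, forcing $F_m(V)\subset V$. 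With those two repairs, your concatenation $\beta\cup F_m(\beta)\cup F_m^2(\beta)\cup\cdots$ is exactly the paper's $\Gamma=\bigcup_k F_m^k(\varphi(\gamma))$, and (1.2) makes it land at $p$.
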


\begin{proof}
First note that $ F_m(D(x_0, r)) \subset D(x_0, r)$. Indeed, by {\em\ref{item-1}} and {\em\ref{item-2}}, we have that  $ x_m\in D(x_0, r/3) $ and $ \textrm{diam } F_m(D(x_0, r))<r/3 $,  so \[ F_m(D(x_0,r))\subset D(x_m, 2r/3)\subset D(x_0, r).\] 
Therefore, by the Denjoy-Wolff Theorem, there exists a fixed point $ p\in D(x_0,r) $, which attracts all points in $ D(x_0, r) $ under the iteration of $ F_m $. Hence, it is repelling under $ f^m $ and thus belongs to $ \mathcal{J}(f) $.

    It is left to show that $ p $ is accessible from $ U $. To do so, first note that, by {\em \ref{item-3}}, the inverse  branch $ G_m$ of $ g^{-m} $  is well-defined in $ D(\xi_0, \rho_m) $, and we have that $$ \varphi\circ G_m= F_m\circ \varphi $$ in $ \Delta_{ \rho_m}(\xi_0) $. Moreover, we have that $$ G_m(R_{\rho_m}(\xi_0))\subset  \Delta_{	\rho_m}(\xi_m)\subset \Delta_{	\rho}(\xi_m)  .$$  By {\em \ref{item-4}}, $  \Delta_{	\rho}(\xi_0)\cup  \Delta_{ \rho}(\xi_m)$ is connected. Therefore, if we take $ z\in R_{\rho_m}(\xi_0) $, then $ G_m(z)\in  \Delta_{ \rho}(\xi_m)$, and 
     we can find a curve $ \gamma\subset \Delta_{	\rho}(\xi_0)\cup \Delta_{ \rho}(\xi_m)$ joining $ z $ and $ G_m(z) $. By {\em \ref{item-4}}, $ \varphi(\gamma)\subset D(x_0, r) $, and  joins $ \varphi(z) $ with $ F_m(\varphi(z)) $. See Figure \ref{fig-puntsperiodics}.
       	\begin{figure}[h]\centering
     	\includegraphics[width=16cm]{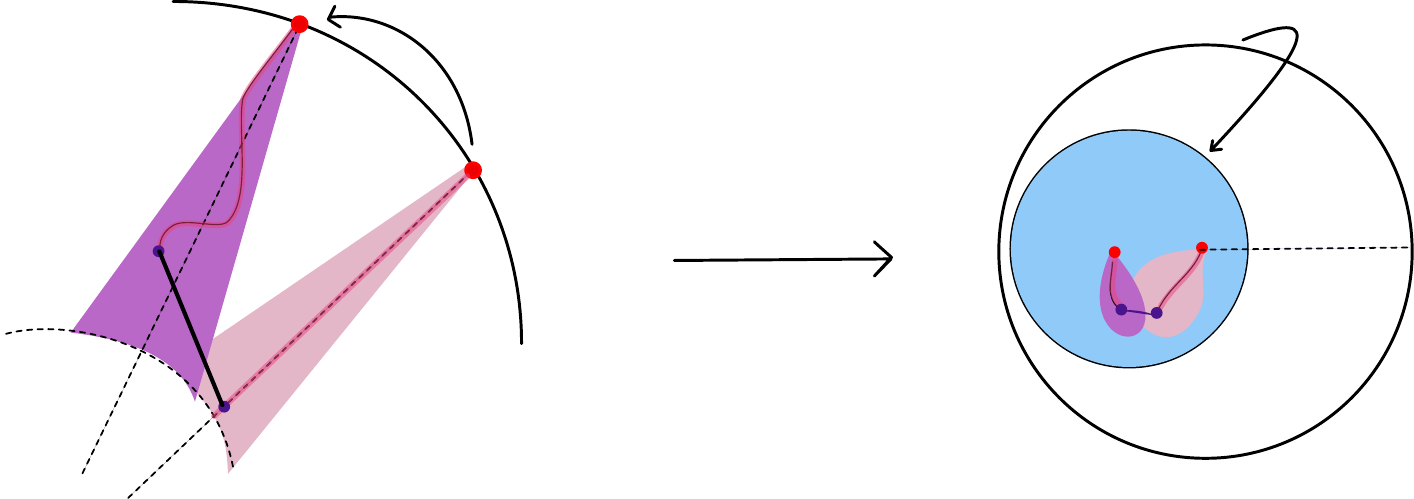}
     	\setlength{\unitlength}{16cm}
     	\put(-0.65, 0.23){$\xi_0$}
     	\put(-0.83, 0.05){$z$}
     	\put(-0.96, 0.15){$G_m(z)$}
     	\put(-0.68, 0.3){$G_m$}
     	\put(-0.08, 0.32){$F_m$}
     	\put(-0.865, 0.12){$\gamma$}
     	\put(-0.46, 0.18){$\varphi$}
     	\put(-0.65, 0.085){$\partial\mathbb{D}$}
     	\put(-0.8, 0.35){$\xi_m$}
     	\put(-0.22, 0.11){$\varphi(\gamma)$}
     	\put(-0.17, 0.18){$\varphi^*(\xi_0)$}
     	\put(-0.25, 0.18){$\varphi^*(\xi_m)$}
     	\put(-0.07, 0.16){$r$}
     	\caption{\footnotesize The construction of the curve $ \gamma $ in $ \mathbb{D} $, and its image $ \varphi(\gamma) $ in the dynamical plane.}\label{fig-puntsperiodics}
     \end{figure}
 
     Define \[\Gamma\coloneqq \bigcup\limits_{k\geq 0}F^k_m(\gamma).\]Then, $ \Gamma \subset\partial U$ lands at $ p $, ending the proof of Lemma \ref{sublemma-punt-periodic2}, and hence of Theorem \ref{thm-punts-periòdics}. 
\end{proof}

\section{Lyapunov exponents for transcendental maps}\label{sect-lyapunov}
 Let $ f\in\mathbb{K} $, and let $ X\subset \widehat{\mathbb{C}} $. Let $ \mu $ be a measure supported on $ X $, and assume $ \log \left| f'\right|  \in L^1(\mu)$. Then,
	\[ \chi_{\mu}\coloneqq \int_{X}\log\left| f'\right| d\mu\] is called the {\em Lyapunov exponent} of $ f $ (with respect to the measure $ \mu$).
In the previous sections, we were interested in the particular case where $ X $ is the boundary of an invariant Fatou component $ U $, and  $\mu= \omega_U $. We needed to assume $ \log \left| f'\right| \in L^1(\omega_U) $. It is well-known that this holds for rational maps \cite{Przytycki-Hausdorff_dimension}, but it is not clear what happens in the transcendental case. In Section \ref{subsection-integrabilitat-log f'}, we prove integrability of $ \log \left| f'\right| $ with respect to $ \omega_U $, under some assumptions on the shape of the Fatou component and the growth of the function.  In Section \ref{subsection-lyapunov-non-neg} we give conditions under which Lyapunov exponents are non-negative. Again, this is well-known for rational maps \cite{Przytycki-Lyapunovexponents}, but unexplored in the transcendental case. Finally, Section \ref{subsect-parabolic-basins-lyapunov} is devoted to extend some of the results to parabolic basins and Baker domains.

\subsection{Integrability of $ \log \left| f'\right|  $. Proposition \ref{prop:D}}\label{subsection-integrabilitat-log f'}

We examine the integrability of $ \log \left| f'\right|  $ with respect to harmonic measure. First observe that, by Harnack's inequality, if $ \log \left| f'\right| \in L^1(\omega_U(p,
\cdot)) $, then  $ \log \left| f'\right| \in L^1(\omega_U(q,
\cdot)) $, for all $ q\in U $. Hence,  we simply write  $\log \left| f'\right| \in L^1(\omega_U) $. 

To begin with, we prove that the integrability of $ \log \left| f'\right|  $ and the Lyapunov exponent is invariant under conjugating $ f $ by Möbius transformations. This is the content of the following lemma, which follows from  \cite[p. 165]{Przytycki-Hausdorff_dimension}. 

\begin{lemma}{\bf (Lyapunov exponent invariant under Möbius transformations)}\label{lemma-conjugate-mobius}
	Let $ f\in \mathbb{K} $, and let $ U $ be an invariant Fatou component for $ f $. Let $ M\colon\widehat{\mathbb{C}}\to \widehat{\mathbb{C}} $ be  a Möbius transformation, and let $ g\in\mathbb{K} $ be defined as $ g\coloneqq M\circ f\circ M^{-1} $. Then, $\log \left| f'\right| \in L^1(\omega_U) $ if and only if $\log \left| g'\right| \in L^1(\omega_{M(U)}) $. Moreover, if $ \omega_U $ is $ f $-invariant, then $ \omega_{M(U)} $ is $ g $-invariant, and \[\chi_{\omega_U} (f)=\chi_{\omega_{M(U)}} (g).\]
\end{lemma}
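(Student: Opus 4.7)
The plan is to work directly from the chain rule for $g = M \circ f \circ M^{-1}$, namely
\[
\log|g'(z)| \;=\; \log|M'(f(M^{-1}(z)))| \;+\; \log|f'(M^{-1}(z))| \;+\; \log|(M^{-1})'(z)|,
\]
and reduce the integral of each term with respect to $\omega_{M(U)}$ to an integral with respect to $\omega_U$ via the Möbius invariance of harmonic measure (Lemma \ref{lemma-harmonic-measure-Mobius}). Concretely, that lemma tells us that the push-forward of $\omega_U(p,\cdot)$ under (the radial extension of) $M$ is $\omega_{M(U)}(M(p),\cdot)$, so for every measurable $\varphi$,
\[
\int \varphi(z)\, d\omega_{M(U)}(z) \;=\; \int \varphi(M(w))\, d\omega_U(w).
\]

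First I would dispose of the integrability statement. Writing $M(z)=(az+b)/(cz+d)$, one has $\log|M'(z)| = \log|ad-bc| - 2\log|cz+d|$, and similarly for $(M^{-1})'$, so $\log|M'|$ and $\log|(M^{-1})'|$ are, up to constants, of the form $\log|z-a|$. By Lemma \ref{lemma-log-integrable} these are in $L^1(\omega_U)$ and in $L^1(\omega_{M(U)})$. Therefore $\log|f'\circ M^{-1}|\in L^1(\omega_{M(U)})$ iff the full expression $\log|g'|$ is; and the change of variables above shows $\int \log|f'(M^{-1}(z))|\,d\omega_{M(U)}(z)=\int \log|f'(w)|\,d\omega_U(w)$, which is finite iff $\log|f'|\in L^1(\omega_U)$. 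This gives the first conclusion.

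Next I would handle the invariance of $\omega_{M(U)}$ under $g$ when $\omega_U$ is $f$-invariant: for any Borel set $B$,
\[
\omega_{M(U)}(g^{-1}(B)) = \omega_U(M^{-1}g^{-1}(B)) = \omega_U(f^{-1}M^{-1}(B)) = \omega_U(M^{-1}(B)) = \omega_{M(U)}(B),
\]
using the push-forward identity twice and $f$-invariance of $\omega_U$ in the middle.

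Finally, for the Lyapunov exponent equality, I would integrate the chain-rule identity term by term. The middle term is exactly $\chi_{\omega_U}(f)$ by the change of variables. The first term, using $f$-invariance of $\omega_U$, becomes
\[
\int \log|M'(f(M^{-1}(z)))|\,d\omega_{M(U)}(z) = \int \log|M'(f(w))|\,d\omega_U(w) = \int \log|M'(w)|\,d\omega_U(w),
\]
while the third term, using $(M^{-1})'(M(w))=1/M'(w)$, becomes
\[
\int \log|(M^{-1})'(z)|\,d\omega_{M(U)}(z) = -\int \log|M'(w)|\,d\omega_U(w).
\]
These two cancel and we obtain $\chi_{\omega_{M(U)}}(g)=\chi_{\omega_U}(f)$.

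The only mildly delicate step is justifying the splitting of the integral of $\log|g'|$ as a sum of three integrals: a priori each of them could be $\pm\infty$. This is where the integrability afforded by Lemma \ref{lemma-log-integrable} is essential, since it guarantees that $\log|M'|$ and $\log|(M^{-1})'|$ are absolutely integrable against the relevant harmonic measures, so the additive decomposition and the cancellation are legitimate.
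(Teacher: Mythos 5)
Your approach is cleaner in presentation than the paper's: you work directly on $\partial U$ and $\partial M(U)$ using the Möbius equivariance of harmonic measure (Lemma~\ref{lemma-harmonic-measure-Mobius}) as a change-of-variables formula, whereas the paper pulls everything back to $\partial\mathbb{D}$ via a Riemann map $\varphi$ and works with $\lambda$. Both are legitimate, and the parts of your argument that concern $g$-invariance of $\omega_{M(U)}$ and the cancellation of the two $M$-terms in the Lyapunov exponent (where you invoke $f$-invariance of $\omega_U$) are correct.

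There is, however, a genuine gap in your treatment of the \emph{first} conclusion. In the chain-rule decomposition
\[
\log|g'(z)| = \log\bigl|M'\bigl(f(M^{-1}(z))\bigr)\bigr| + \log\bigl|f'(M^{-1}(z))\bigr| + \log\bigl|(M^{-1})'(z)\bigr|,
\]
Lemma~\ref{lemma-log-integrable} handles the \emph{third} term directly, but it does \emph{not} handle the \emph{first} term: what you need there is $\log|M'\circ f|\in L^1(\omega_U)$, and knowing $\log|M'|\in L^1(\omega_U)$ does not yield this — the composition with $f$ changes the measure against which you integrate. In your final paragraph you claim that $\log|M'|$ and $\log|(M^{-1})'|$ being integrable "against the relevant harmonic measures" justifies the splitting, but this glosses over exactly this term. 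Moreover, the first conclusion of the lemma (the $L^1$ equivalence) is stated \emph{without} assuming $\omega_U$ is $f$-invariant, so you cannot use invariance here the way you do for the Lyapunov exponent part; the equivalence is used later in the paper (e.g.\ in Proposition~\ref{prop-f'-integrable}) precisely in settings where no invariant harmonic measure is available. The paper closes this gap via Löwner's lemma (Lemma~\ref{lemma-lowner}), which says $\omega_U(z, f^{-1}(B)) = \omega_U(f(z), B)$ for all basepoints $z$, giving
\[
\int_{\partial U}\bigl|\log|M'(f(x))|\bigr|\,d\omega_U(p,x) = \int_{\partial U}\bigl|\log|M'(x)|\bigr|\,d\omega_U(f(p),x),
\]
and then uses the remark preceding the lemma (Harnack's inequality) that integrability against $\omega_U(q,\cdot)$ is independent of the basepoint $q\in U$. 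You would need to insert this step; without it, the splitting of the integral is not justified in the non-invariant case.
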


Observe that, for rational maps, $ \left| f'\right|  $ is bounded, so $ \chi_\mu (f) $ is well-defined (although \textit{a priori} may be equal to $ -\infty $). By a careful study of $ f $ around critical points, it is established that it is never the case, and in fact Lyapunov exponents are always non-negative (\cite{Przytycki-Lyapunovexponents}, see also \cite[Sect. 28.1]{UrbanskiRoyMunday3}). In the case of transcendental maps, $ \left| f'\right|  $ may not be bounded, and this is why we need an assumption on the growth.

To simplify the notation, in the sequel we shall assume $ \infty\in U $, hence $ \widehat{\partial}U $ is a compact subset of the plane, and that none of the singularities is placed at $ \infty $. 
\begin{defi}{\bf (Order of growth in sectors)}\label{defi-sectorS}
	Let $ f\in\mathbb{K} $, and let $ U\subset \widehat{\mathbb{C}} $ be an invariant Fatou component for $ f $. We say that $ U $ is {\em asymptotically contained in a sector of angle $ \alpha\in   \left( 0, 1\right)  $ with order of growth $ \beta >0 $} if there exists $ r>0 $, $ s_1, \dots, s_k\in\widehat{\partial} U \smallsetminus\left\lbrace \infty\right\rbrace $ and $ \xi_1, \dots, \xi_k\in\partial\mathbb{D} $, such that, if
	\[ S_{\alpha, r}=\bigcup\limits_{i=1}^k\left\lbrace z\in{\mathbb{C}} \colon \left| z-s_i\right| <r, \left|\textrm{Arg }\xi_i-\textrm{Arg }(z-s_i )\right| <\pi\alpha\right\rbrace \]satisfies
	\begin{enumerate}[label={(\alph*)}]
		\item $ \overline U \cap \bigcup\limits_{i=1}^k D(s_i, r) \subset S_{\alpha, r} $;
		\item $ f $ has order of growth $ \beta>0 $ in $  S_{\alpha, r} $, i.e. there exist $ A,B>0 $ such that for all $ z\in S_{\alpha, r}$,  \[A \cdot e^{B\cdot r^{\beta}}\leq \left| f'(z)\right| \leq A \cdot e^{B\cdot r^{-\beta}}.\]
	\end{enumerate}
\end{defi}

Geometrically, each of the sets 
\[ S_{i}=\left\lbrace z\in{\mathbb{C}} \colon \left| z-s_i\right| <r, \left|\textrm{Arg }\xi_i-\textrm{Arg }(z-s_i )\right| <\alpha\right\rbrace \] is a sector of angle $ \alpha \in (0,\pi)$, and side-length $ r>0 $, with vertex at $ s_i\in\widehat{\partial}U $.
Note that this notion of order of growth in sectors is invariant under conjugating $ f $ by a Möbius transformation $ M $, as long as $ M(U)\subset\mathbb{C} $ and $ M(s_i)\neq \infty $, $ i=1,\dots, k $. Indeed, $ M' $ is uniformly bounded around $ \widehat{\partial} U$ and hence distances are distorted in a controlled way when applying $ M $.

Next we check that, if $ U $ is asymptotically contained in a sector of angle $ \alpha\in ( 0, 1) $, with order of growth $ \beta\in(0, 1/2\alpha) $, then $ \log \left| f'\right| \in L^1(\omega_U) $. 
\begin{prop}{\bf ($ \log\left| f'\right|  $ is $ \omega _U$-integrable)}\label{prop-f'-integrable}
	Let $ f\in\mathbb{K} $, and let $ U $ be an invariant Fatou component for $ f $. Assume $ U $ is asymptotically contained in a sector of angle $ \alpha\in ( 0, 1) $, with order of growth $ \beta\in(0, \frac1{2\alpha}) $. Then, $ \log \left| f'\right| \in L^1(\omega_U) $. 
\end{prop}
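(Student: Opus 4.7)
By the Möbius invariance of integrability established in Lemma \ref{lemma-conjugate-mobius}, I may conjugate $f$ so as to assume $\infty\in U$; then $\partial U$ is a compact subset of $\mathbb{C}$ and each $s_i$ of Definition \ref{defi-sectorS} is a finite point. I will then split
\[
\int_{\partial U}\bigl|\log|f'|\bigr|\,d\omega_U \;=\; \int_{\partial U\setminus\bigcup_{i} D(s_i,r_0)}\bigl|\log|f'|\bigr|\,d\omega_U \;+\; \sum_{i=1}^{k}\int_{\partial U\cap D(s_i,r_0)}\bigl|\log|f'|\bigr|\,d\omega_U
\]
and control each piece separately. On the complement of $\bigcup_i D(s_i,r_0)$ in $\partial U$, the spherical derivative $|f'|$ is bounded — since $f$ is meromorphic off $E(f)$, and the prescribed $s_i$'s absorb the portion of $E(f)$ adherent to $\widehat{\partial}U$ where $|f'|$ may blow up — so $\log|f'|$ is bounded there and the first piece is finite because $\omega_U$ is a probability.

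For the piece near each $s_i$, the growth hypothesis says that for every $z\in S_{\alpha,r_0}$ with $\rho:=|z-s_i|<r_0$, applying the bound with $r=\rho$ yields
\[
A\cdot e^{B\rho^{\beta}}\;\le\;|f'(z)|\;\le\;A\cdot e^{B\rho^{-\beta}}.
\]
Since $\rho^{\beta}\le r_0^{\beta}$ is bounded, this gives constants $C_1,C_2>0$ such that $\bigl|\log|f'(z)|\bigr|\le C_1+C_2\,\rho^{-\beta}$ on $\partial U\cap D(s_i,r_0)$. I will then decompose this set into dyadic annuli $A_n:=\{z\in\partial U\cap D(s_i,r_0):\,2^{-(n+1)}r_0\le |z-s_i|\le 2^{-n}r_0\}$, on which $\rho^{-\beta}\le(2^{-(n+1)}r_0)^{-\beta}$. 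The sector assumption $\overline U\cap D(s_i,r_0)\subset S_{\alpha,r_0}$ places us in the scope of Lemma \ref{lemma-estimates-harmonic-measure}, which yields a constant $C_3>0$ with
\[
\omega_U(A_n)\;\le\;\omega_U(D(s_i,2^{-n}r_0))\;\le\;C_3\,(2^{-n}r_0)^{\frac{1}{2\alpha}}.
\]
Summing,
\[
\int_{\partial U\cap D(s_i,r_0)}\bigl|\log|f'|\bigr|\,d\omega_U\;\le\;\sum_{n=0}^{\infty}\bigl(C_1+C_2\,(2^{-(n+1)}r_0)^{-\beta}\bigr)\cdot C_3\,(2^{-n}r_0)^{\frac{1}{2\alpha}}\;\le\;C_4\sum_{n=0}^{\infty}2^{\,n(\beta-\frac{1}{2\alpha})},
\]
which converges precisely because $\beta<\tfrac{1}{2\alpha}$.

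The main obstacle I anticipate is the control on the "good" piece: one has to confirm that $|f'|$ is indeed uniformly bounded on the compact set $\partial U\setminus\bigcup_i D(s_i,r_0)$ in the spherical sense, i.e.\ that no further singularity of $f$ is lurking on $\widehat{\partial}U$ outside the $D(s_i,r_0)$'s where $|f'|$ escapes. This relies on the standing convention of working with spherical derivatives and on the geometric setup (which is built precisely so that the $s_i$ enclose all derivative blow-up on $\widehat{\partial}U$). Once this is in hand, the dyadic estimate above is the substance of the argument and exhibits $\beta=\tfrac{1}{2\alpha}$ as the sharp threshold arising from the tension between the exponential blow-up of $|f'|$ and the $\rho^{1/(2\alpha)}$ decay of harmonic measure at a sector vertex.
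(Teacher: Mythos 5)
Your dyadic decomposition near each $s_i$ is sound and matches the paper's argument in substance; the paper uses annuli of radii $1/n$ rather than $2^{-n}$, but both combine the sector estimate $\omega_U(D(s_i,r))\lesssim r^{1/(2\alpha)}$ (Lemma \ref{lemma-estimates-harmonic-measure}) with the growth bound $\bigl|\log|f'|\bigr|\lesssim \rho^{-\beta}$ and yield the same threshold $\beta<\tfrac1{2\alpha}$. That part is correct.

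There is, however, a genuine gap in how you dispose of the ``good'' piece. You assert that $|f'|$ being bounded on the compact set $\partial U\setminus\bigcup_i D(s_i,r_0)$ implies $\log|f'|$ is bounded there. Bounded above, yes; but $\log|f'|$ is not bounded below, because $|f'|$ may vanish at critical points of $f$ lying on $\partial U$ (and likewise the spherical derivative vanishes at poles of order $\ge 2$). At such a point $a$ one has $\log|f'(z)|\sim (m-1)\log|z-a|\to -\infty$, so boundedness fails. This does not wreck integrability, but it means the claim needs an argument: expand $f$ in a Taylor (or Laurent) series at each such point to reduce to $\log|z-a|$, and invoke Lemma \ref{lemma-log-integrable}, which says $\log|z-a|\in L^1(\omega_U)$ for every $a\in\mathbb{C}$. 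This is exactly the step the paper inserts (``the only difficulty is to see that $\log|f'|$ is integrable around critical points\dots using that $\log|z-a|$ is integrable''), and it is missing from your write-up. With that repair the rest of your proposal goes through.
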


Note that Proposition \ref{prop-f'-integrable} implies Proposition \ref{prop:D}. Before proving it, we need some estimates on the harmonic measure of sectors.

\subsubsection*{Estimates on the harmonic measure of sectors}

We start by recalling the following estimate on harmonic measure of disks for  simply connected domains, which follows from Beurling's Projection Theorem  \cite[Thm. 9.2]{harmonicmeasure2}.
\begin{thm}{\bf (Harmonic measure of disks, {\normalfont \cite[p. 281]{harmonicmeasure2}})}\label{thm-beurling}
	Let $ U \subset\widehat{\mathbb{C}}$ be a simply connected domain, such that $ \infty \in U $ and $  \textrm{\em diam}(\partial U)=2 $. Then, for all $ x\in\partial U $ and $ r>0 $, we have \[\omega_U(\infty, D(x,r))\leq \sqrt{r}.\]
\end{thm}

Assuming that $ U $ is contained in some sector \[ S_{\alpha, r}(x, \xi)=\left\lbrace z\in{\mathbb{C}} \colon \left| z-x\right| <r, \left|\textrm{Arg }\xi-\textrm{Arg }(z-x)\right| <\pi \alpha\right\rbrace ,\] with vertex at $ x\in{\partial}U $ (see Fig. \ref{fig-sector}), we obtain improved estimates of harmonic measure for disks centered at $ x $.

\begin{figure}[h]\centering
	\includegraphics[width=15cm]{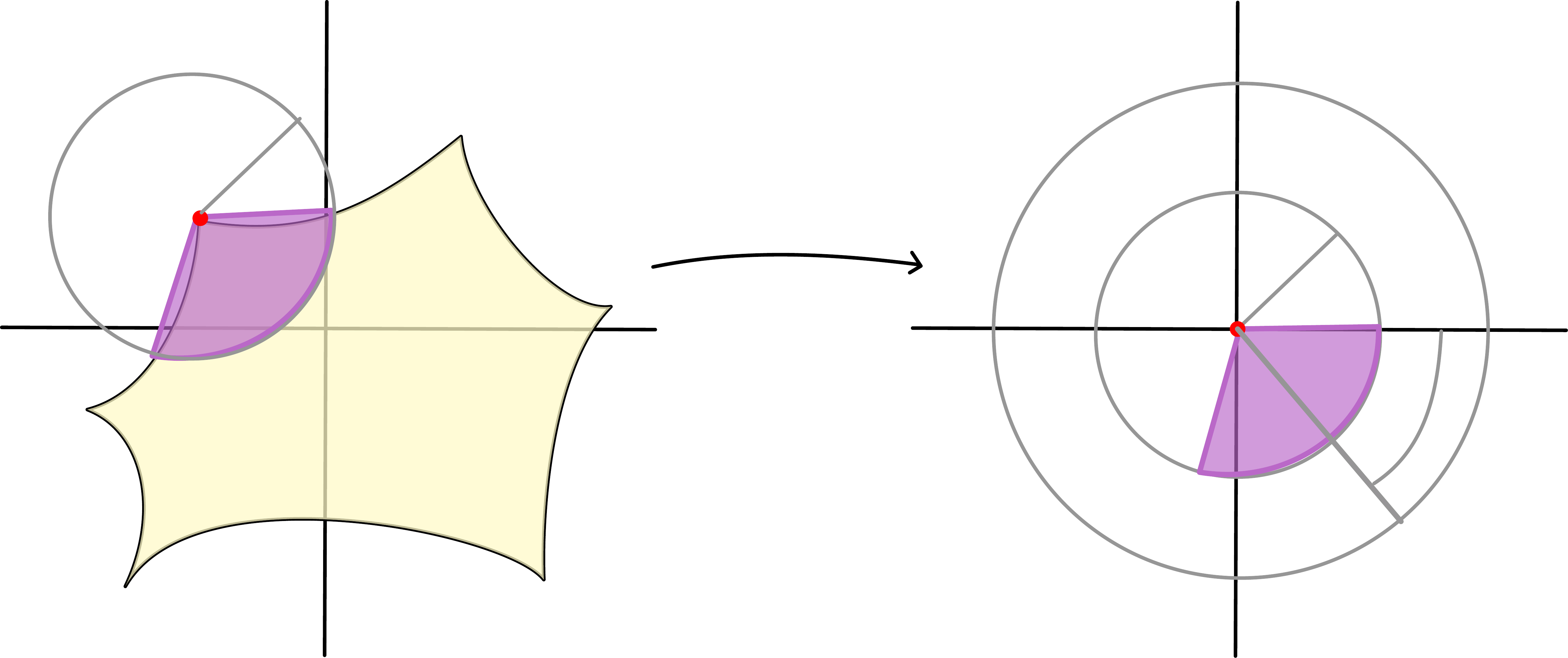}
	\setlength{\unitlength}{15cm}
	\put(-0.55, 0.26){\small $ z\mapsto z-x $}
	\put(-0.895, 0.28){\small $ x $}
	\put(-0.85, 0.32){\small $ r $}
	\put(-0.18, 0.25){\small $ r $}
	\put(-0.085, 0.15){\small $ \pi\alpha $}
	\put(-0.1, 0.07){\small $ \xi $}
	\put(-0.83, 0.19){\small $ S_{\alpha, r}  (x, \xi)$}
	\put(-0.75, 0.15){\small $ U $}
	\caption{\footnotesize  A visual representation of the definition of the sector $ S_{\alpha, r}  (x, \xi) $.}\label{fig-sector}
\end{figure}

\begin{lemma}{\bf (Harmonic measure of sectors)}\label{lemma-estimates-harmonic-measure}
	Let $ U \subset \mathbb{C}$ be a simply connected domain, and let $ z_0\in  U $, $ x\in{\partial} U $. Assume there exists $ r_0>0 $, $ \alpha\in \left( 0, 1\right) $ and $ \xi\in \partial\mathbb{D} $, such that  $$ D(x,r_0)\cap U \subset S_{\alpha, r_0} (x, \xi).$$
	Then, there exists $ C>0$ and $ r_1\in (0, r_0) $ such that, for all $ r\in(0,r_1) $, 
	\[ \omega_U(z_0,D(x, r))\leq C\cdot r^\frac{1}{2\alpha} .\]
\end{lemma}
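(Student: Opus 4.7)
My plan is to reduce the claimed $r^{1/(2\alpha)}$-decay to the $\sqrt{\rho}$-decay already available in Lemma~\ref{lemma-harmonic-measure}, by conjugating the sector of opening $2\pi\alpha$ to a slit disk via a fractional power map. Invoking the M\"obius invariance of harmonic measure (Lemma~\ref{lemma-harmonic-measure-Mobius}), I first normalize to $x=0$ and $\xi=1$, so that $U\cap D(0,r_0)\subset S:=\{z:|z|<r_0,\,|\arg z|<\pi\alpha\}$. The principal branch of $\psi(z)=z^{1/\alpha}$ is a conformal bijection from $S$ onto the slit disk $\Sigma=D(0,r_0^{1/\alpha})\setminus(-r_0^{1/\alpha},0]$, and satisfies $\psi(D(0,r)\cap S)\subset D(0,r^{1/\alpha})$ for every $r\in(0,r_0)$.

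Next I would localize to a connected component $V$ of $U\cap D(0,r_0)$ that has $0$ on its boundary (if none exists the conclusion is trivial for small $r$; if several exist, one argues on each in turn and sums). Such a $V$ is a simply connected subdomain of $S$, and $\psi(V)$ is a simply connected subdomain of $\mathbb{C}$. Applying Lemma~\ref{lemma-harmonic-measure} inside $\psi(V)$ at a convenient basepoint $\psi(z_*)$, and transporting back via the conformal invariance of harmonic measure (whose boundary correspondence extends to the prime ends accumulating at $0$), gives
\[
\omega_V\bigl(z_*,\,D(0,r)\cap\partial V\bigr)\le C\,r^{1/(2\alpha)}
\]
for every sufficiently small $r$.

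The delicate remaining step is to pass from $\omega_V$ to $\omega_U$, since the sub-domain comparison of Lemma~\ref{lemma-comparison-hm} runs in the wrong direction. My approach is via the strong Markov property of Brownian motion: any trajectory issued at $z_0$ and exiting $U$ inside $D(0,r)\cap\partial U\subset\partial V\cap\partial U$ must first enter $\overline V$, and then decomposes into successive excursions that leave $V$ through $\partial D(0,r_0)\cap U$ and eventually come back. Each excursion contributes at most $Cr^{1/(2\alpha)}$ by the previous bound, while the chance of initiating yet another excursion is uniformly bounded by some $q<1$, because from every $y\in\partial D(0,r_0)\cap\overline U$ the Brownian motion has a fixed positive probability of exiting $U$ at a point far from $0$. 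Summing the resulting geometric series and moving the basepoint from $z_*$ back to $z_0$ via Harnack's inequality yields $\omega_U(z_0,D(0,r))\le C'\,r^{1/(2\alpha)}$.

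I expect the uniform escape bound $q<1$, independent of $r$, to be the principal technical obstacle. Should this probabilistic argument prove awkward, an equivalent alternative is to set $f(z)=-\log(\varphi(z))/\alpha$ for the Riemann map $\varphi:\mathbb{D}\to U$ with $\varphi(0)=z_0$, using the single-valued branch of $\log$ on the simply connected domain $U\not\ni 0$: the sector hypothesis then forces the part of the simply connected image $f(\mathbb{D})$ with $\mathrm{Re}\,w>-\log(r_0)/\alpha$ to lie in a disjoint union of horizontal strips of width $2\pi$, and the desired $r^{1/(2\alpha)}$-bound reduces to the standard exponential decay, of rate $1/2$, of harmonic measure at the end of a half-strip, which is proved by comparison with a suitable explicit positive harmonic function.
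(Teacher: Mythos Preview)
Your overall strategy---straighten the sector by a fractional power and invoke a $\sqrt{\rho}$-type estimate---is sound, but the localization you choose makes the comparison run the wrong way and forces the excursion/geometric-series machinery. The paper avoids this entirely by removing the \emph{small} disk rather than intersecting with the large one: taking $V$ to be the component of $U\setminus D(x,r)$ containing $z_0$, one has $\omega_U(z_0,D(x,r)\cap\partial U)\le\omega_V(z_0,\partial D(x,r))$ directly from the maximum principle (Brownian motion must cross $\partial D(x,r)$ before exiting $U$ inside $D(x,r)$). After this step, only the purely geometric estimate $\omega_{S\setminus D(0,r)}(1,\partial D(0,r))\le C\,r^{1/(2\alpha)}$ is needed, and the paper gets it from the single conformal map $z\mapsto z^{1/(2\alpha)}$ sending $S$ to a half-plane---your detour through $z^{1/\alpha}$ and the slit-disk Lemma~\ref{lemma-harmonic-measure} reaches the same exponent but in two steps.

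There is also a gap in your excursion bound as written: you need $\sup_y\omega_V(y,D(0,r)\cap\partial V)\le C\,r^{1/(2\alpha)}$ \emph{uniformly} over entry points $y$ on $\partial D(0,r_0)\cap U$, but your ``previous bound'' via Lemma~\ref{lemma-harmonic-measure} on $\psi(V)$ has a constant depending on the fixed basepoint $\psi(z_*)$, and Harnack degenerates as $y$ approaches $\partial U$. The repair is to bound instead the probability of crossing $\partial D(0,r)$ while staying in the full sector $S$---i.e.\ exactly $\omega_{S\setminus D(0,r)}(y,\partial D(0,r))$---which is uniform in $y$ with $|y|\ge r_0/2$; but that is precisely the paper's final computation, at which point the excursion argument and the bound $q<1$ become superfluous. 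Your half-strip alternative is correct and standard, though again less direct than the paper's one-map reduction.
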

\begin{proof}
	Without loss of generality we assume $ z_0\notin  S_{\alpha, r_0} (x, \xi) $, and let $ r\in(0,r_0) $.
	First observe that, if $ V $ denotes the connected component of $ U\smallsetminus  D(x,r)$ that contains $ z_0 $, we have that 
	\[\omega_U(z_0, D(x,r))= \omega_U(z_0, D(x,r)\cap \partial U)\leq \omega_V (z_0, \partial D(x,r)\smallsetminus \partial U)\leq \omega_V (z_0, \partial D(x,r)), \] where in the first inequality we applied the Comparison Lemma \cite[Prop. 21.1.13]{Conway} (note that we apply it to the complements, and hence the inequality is reversed), and the second follows from the inclusion of the measured sets.
	
	Next we observe that, without loss of generality, we can assume that $$ V\subset  S_{\alpha}(x, \xi)=\left\lbrace z\in{\mathbb{C}} \colon  \left|\textrm{Arg }\xi-\textrm{Arg }(z-x)\right| <\pi \alpha\right\rbrace.$$ Indeed, since we want to estimate the harmonic measure of disks $ D(x,r) $ centered at $ x \in\partial U$ (which is a local property of the boundary around the point $ x $), and $ D(x,r_0)\cap U \subset S_{\alpha, r_0} (x, \xi) $, for $ r>0 $ small enough, we can disregard the part of $ \partial U $ outside $ D(x, r_0) $.
	
	Therefore, up to composing by appropriate Möbius transformations, it is left to show that, if 
	\[ S= \left\lbrace z\in\mathbb{C}\colon \left| \textrm{Arg }z\right|<\pi\alpha\right\rbrace , \] then, for some constant $ C>0 $, we have 
	\[\omega_{S\smallsetminus D(0,r)} (1, \partial D(0,r))\leq C \cdot r^{\frac{1}{2\alpha}}.\]
	However, since the length of a circumference of radius $ r $ is $ 2\pi r $ (i.e. proporcional to the radius), it is enough to see that $ \omega_{S} (1, D(0,r)) $  decays to 0 like $ r^{\frac{1}{2\alpha}} $, when $ r\to 0 $ (see Fig. \ref{fig-hmsector}).  But, since $ M(z)=z^{\frac{1}{2\alpha}} $ is a conformal map from $ S $ to the right half-plane fixing 1, this follows immediately (see again Fig. \ref{fig-hmsector})
	\begin{figure}[htb!]\centering
		\includegraphics[width=16cm]{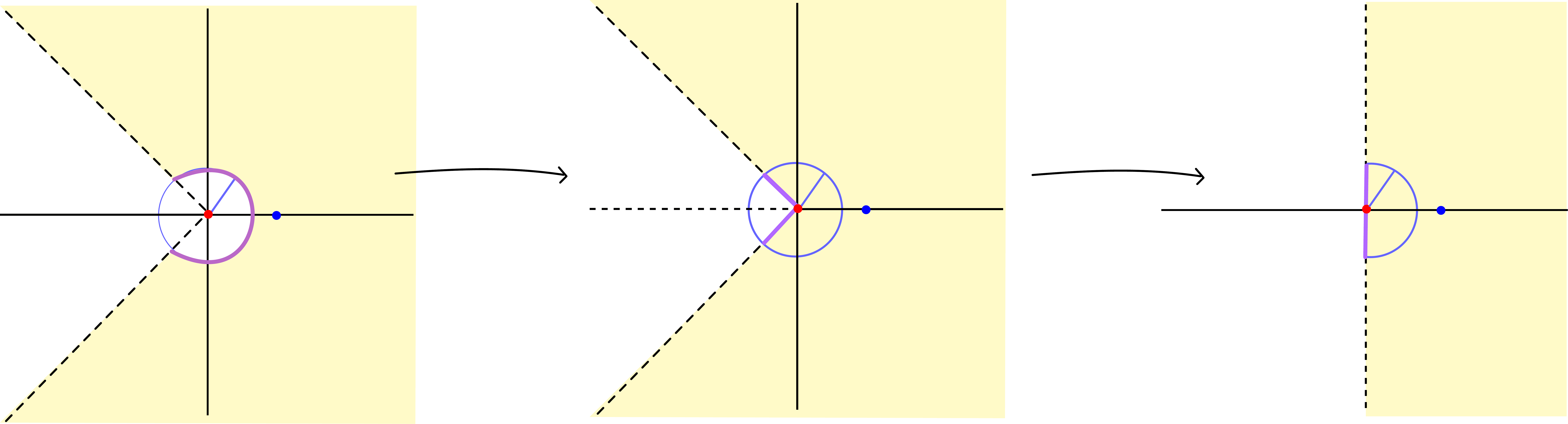}
		\setlength{\unitlength}{16cm}
		\put(-0.85, 0.25){\small$S\smallsetminus D(0,r)$}
		\put(-0.38, 0.25){\small$S$}
		\put(-0.02, 0.25){\small$\mathbb{H}$}
		\put(-0.85, 0.14){\small$r$}
		\put(-0.48, 0.14){\small$r$}
		\put(-0.11, 0.14){\small$r^\theta$}		
		\put(-0.32,0.165){\small $ z\mapsto z^\theta $}
		\put(-0.32,0.145){\tiny $ \theta=1/2\alpha $}				
		\caption{\footnotesize  A visual scheme to approximate harmonic measure of sectors. }\label{fig-hmsector}
	\end{figure}
	
\end{proof}

\subsection*{Proof of Proposition \ref{prop-f'-integrable}}

\begin{proof}[Proof of Proposition \ref{prop-f'-integrable}]
	By conjugating by a Möbius transformation if needed, we can assume $ \infty\notin U $. 
Note that $ \log \left| f'\right| $ is integrable with respect to harmonic measure when restricted to compact subsets of the domain $ \widehat{\mathbb{C}}\smallsetminus\left\lbrace s_1,\dots, s_k\right\rbrace  $. Indeed, the only difficulty is to see that $ \log \left| f'\right| $ is integrable around critical points. It is easy to check this by considering the Taylor expansion of $ f $ around the critical point, and using that $ \log \left| z-a\right|  $ is integrable with respect to $ \omega_U $ for all $ a\in\mathbb{C} $ (see e.g. \cite[Sect. 11.2]{PrzytyckiUrbanski}).
	
	It is left to check integrability near the singularities, and here is where we use the estimates on the growth. Let us use the notation
	\[\log^+\left| f'(z)\right| \coloneqq\max(0, \log \left| f'(z)\right| ), \hspace{0.7cm}\log^-\left| f'(z)\right| \coloneqq-\min(0, \log \left| f'(z)\right| ),\]
	so that
	$\left| \log\left| f'\right|\right| =\log^+\left| f'\right| +\log^-\left| f'\right| $ .
	Since $ \log^+\left| f'\right| $ and $ \log^-\left| f'\right|   $ satisfy analogous estimates, we check only that $ \log^+\left| f'\right| \in L^1(\omega_U) $. In fact, we only need to check integrability near $ s_i $, say in a disk $ D(s_i,r) $ for some $ r>0 $. Write $ D_n=D(s_i, 1/n) $, for $ n $ small enough. We have 
	\begin{align*}
		\int_{\partial U\cap D(s_i, r)} \log^+\left| f'\right| d\omega_U& \lesssim \sum_nn^\beta \left( \omega_U(D_n)-\omega_U(D_{n+1})\right) 
\lesssim \sum_n ((n+1)^\beta-n^\beta)\omega_U(D_{n+1})\\&\lesssim \sum_n n^{\beta-1}\cdot \frac1{n^{2\alpha}}=\sum_n \frac1{n^{-\beta+1+2\alpha}}.
	\end{align*}
The hypothesis $ \beta\in(0, \frac1{2\alpha}) $ guarantees the convergence of the sum, and hence of the integral, as desired.
\end{proof}

\subsection{Non-negative Lyapunov exponents. Proposition \ref{prop:E}}\label{subsection-lyapunov-non-neg}
Next, we give conditions under which $ \chi_{\omega_U} $ is non-negative. Our result is inspired by \cite[Lemma 9.1.2, Corol. 9.1.3]{KotusUrbanski}, but we remark that we do not assume that $ f $ extends holomorphically (in fact, not even continuously)  to a neighbourhood of $ \widehat{\partial}U $.

\begin{prop}{\bf (Lyapunov exponents are non-negative)}\label{prop-lya-non-neg}
	Let $ f\in\mathbb{K} $, and let $ U $ be an invariant Fatou component for $ f $, such that $ \omega_U $ is $ f $-invariant. Assume\begin{enumerate}[label={\em (\alph*)}]
		\item $ U $ is asymptotically contained in a sector of angle $ \alpha\in \left( 0, 1\right) $, with order of growth $ \beta\in(0, \frac1{2\alpha}) $;
		\item\label{hipotesib} $ \int_{\partial U} \log\left| x-SV\right|^{-1} d\omega_U(x)<\infty  $.
	\end{enumerate}
	Then, \[\chi_{\omega_U}=\int_{\partial U}\log \left| f'\right| d\omega_U\geq 0.\]
\end{prop}

Note that Proposition \ref{prop-lya-non-neg} implies Proposition \ref{prop:E}.

\begin{proof}
	If $ \omega_U $ is $ f $-invariant, then $ U $ is either an attracting basin or a Siegel disk, and $ \omega_U $ is precisely the harmonic measure with basepoint the fixed point  $ p\in U $. In particular, $ f|_{\partial U} $ is ergodic with respect to $ \omega_U $.
	
	\vspace{0.3cm}
	\noindent
	{\em 1.  Asymptotic contraction of $ f^n|_{\partial U} $, $ \omega_U $-almost everywhere.}
	By Proposition \ref{prop-f'-integrable}, the integral $\chi_{\omega_U}=\int_{\partial U}\log \left| f'\right| d\omega_U$ is well-defined. Since $ f|_{\partial U} $ is ergodic, Birkhoff Ergodic Theorem \ref{thm-birkhoff}, for $ \omega_U $-almost every $ x\in\partial U $, \[\lim\limits_n \frac{1}{n}\log \left| (f^n)'(x)\right| =\chi_{\omega_U}(f).\] We want to show that $  \chi_{\omega_U}(f)\geq 0$. We shall assume, on the contrary, that $  \chi_{\omega_U}(f)< 0$, and seek for a contradiction.
	
	Since $  \chi_{\omega_U}(f)< 0$, it follows that there exists $ M\in (e^{\frac{\chi_{\omega_U}}{4}}, 1) $ and $ n_0\coloneqq n_0(x) \in\mathbb{N}$ such that, for all $ n\geq n_0 $, \[\left| (f^n)'(x)\right| ^\frac{1}{4}\leq M^n<1.\]
	We fix $ x\in \partial U $ satisfying the previous property, and we denote by $ \left\lbrace x_n\right\rbrace _n $ its forward orbit.
	
	\vspace{0.3cm}
	\noindent
	{\em 2. Shrinking domains where $ f|_{\partial U} $ is univalent, $ \omega_U $-almost everywhere.}
	Let $ M \in (0,1)$ be the constant fixed in the previous step, and let $ x_n=f^n(x) $, for $ n\geq 0 $.
	
	\begin{sublemma}\label{lemma-univalent}
		For $ \omega_U $-almost every $ x\in\partial U $ and $ \lambda\in (M,1) $, there exists $ n_1\geq n_0 $ such that $ f|_{D(x_n, \lambda^n)} $ is univalent, for all $ n\geq n_1 $.
	\end{sublemma}
	In particular, since $ \lambda>M $, $ f|_{D(x_n, M^n)} $ is univalent, for all $ n\geq n_1 $.
	
	\begin{proof}
		Since $ \beta<\frac{1}{2\alpha} $, we can choose $ \gamma\in (\beta, \frac{1}{2\alpha}) $. Then, applying the estimates of Lemma \ref{lemma-estimates-harmonic-measure}, we have 
		\[\omega_U\left( S_{\alpha, n^{-\frac{1}{\gamma}}}\right) \leq C\cdot n^{-\frac{1}{\gamma\cdot 2\alpha}},\] and therefore \[ \sum_{n\geq 1}\omega_U\left( S_{\alpha, n^{-\frac{1}{\gamma}}}\right)\leq \sum_{n\geq 1} C\cdot n^{-\frac{1}{\gamma\cdot 2\alpha}} <+\infty.\]
		
		By the assumption on the growth, for all $ z\notin S_{\alpha, n^{-\frac{1}{\gamma}}} $ and $ n\in\mathbb{N} $ large enough, we have $$ \left| f'(z)\right| \leq C\cdot e^{n^{\frac{\beta}{\gamma}}} ,$$ for some constant $ C>0 $. Then, it is easy to see  that there exists  $ C'>0 $ such that, for $ z\notin S_{\alpha, n^{-\frac{1}{\gamma}}} $ and $ n\in\mathbb{N} $ large enough, $$ \left| f'(z)\right| \leq C'\cdot \lambda ^{-n/4},$$ where $ \lambda\in (M,1) $ is the constant given in the statement of the lemma. 
		
		Now, using the previous computations and assumption {\em \ref{hipotesib}}, the first Borel-Cantelli Lemma \ref{lemma-borel-cantelli} yields that, for $ \omega_U $-almost every $ x\in \partial U $ and $ n $ large enough (depending on $ x $), we have \begin{enumerate}[label={(1.\arabic*)}]
			\item\label{lemma35a} $ x_{n+1}\notin \bigcup\limits_{s\in SV}D(s, \lambda^{(n+1)/2}) $, 
			\item $ x_{n}\notin   S_{\alpha, n^{-\frac{1}{\gamma}}}$.
		\end{enumerate}
		
		By \ref{lemma35a}, all inverse branches of $ f $ are well-defined in $ D(x_{n+1}, \lambda^{(n+1)/2}) $ and are univalent. Denote by $ F $ the inverse branch of $ f $ defined in  $ D(x_{n+1}, \lambda^{(n+1)/2}) $ such that $ F(x_{n+1})=x_n $. By Koebe's distortion estimates \ref{thm-Koebe}, we have 
		\[F(D(x_{n+1}, {\lambda^{(n+1)/2}}))\supset D(x_n, R),\]where \[R=\frac14\cdot \left| F'(x_{n+1})\right|\cdot {\lambda^{(n+1)/2}}
		=\frac{\lambda^{(n+1)/2}}{4}\cdot\frac1 {\left| f'(x_n)\right|}\geq \frac{\lambda^{(n+1)/2}}{4}\cdot \lambda^{n/4}=K\cdot \lambda^\frac{3n}4,\]for some constant $ K>0 $. 
		It follows that there exists $ n_1\coloneqq n_1(x)$ large enough so that, for $ n\geq n_1 $,
		\[F(D(x_{n+1}, {\lambda^{(n+1)/2}}))\supset D(x_n, \lambda^n).\]
		Hence, $ f|_{D(x_n, \lambda^n)} $ is univalent, for all $ n\geq n_1 $.
	\end{proof}
	
	Hence, we fix a point $ x\in \partial U $ such that its forward orbit $ \left\lbrace x_n\right\rbrace _n $ satisfies the following conditions, with $ M\in (0,1) $ and $ n_1\coloneqq n_1(x) $ as above:
	\begin{enumerate}[label={(2.\arabic*)}]
		\item  $ \left| (f^n)'(x)\right| ^\frac{1}{4}\leq M^n<1 $, for all $ n\geq n_1 $;
		\item $ f|_{D(x_n, M^n)} $ is univalent, for all $ n\geq n_1 $.
	\end{enumerate}
	
	\vspace{0.3cm}
	\noindent  {\em 3. Quantitative contraction of $ f^n|_{D(x, \lambda^n) }$, for $ n $ large enough.}
	Let \[b_n\coloneqq \left| (f^{n+1})'(x)\right| ^{\frac14}, \hspace{0.5cm}P\coloneqq \prod_{n\geq 1}(1-b_n).\]
	Observe that, since $ \sum b_n\leq\sum M^n<\infty $, $ P $ is convergent. For all $ n $, let $$ D_n \coloneqq D(x, r\cdot \prod_{m= 1}^{n}(1-b_m)).$$
	We can choose $ r\coloneqq r(x)>0 $ small enough so that
	$ 2r<1$,
	$ f^{n_1}|_{D_{n_1}}$ is univalent,
	and $ f^{n_1}({D_{n_1}})\subset D(x_{n_1}, M^{n_1})$.
	
	\begin{claim}
		For $ n\geq n_1$, $ f^{n}|_{D_{n}}$ is univalent, and 
		$ f^n(D_n)\subset D(x_n, M^n) $.
	\end{claim}
	
	 It follows from the claim that, for all $ n\in\mathbb{N} $, $ f^n $ is univalent in $ D(x, rP) $ and $ f^n|_{D(x, rP)}\subset D(x_n, M^n) $.
	\begin{proof}
		We prove the claim inductively: assume the claim is true for $ n\geq n_1 $, and let us see that it also holds for $ n+1 $.
		
		First note that, since 	$ f^n(D_n)\subset D(x_n, M^n) $ (by inductive assumption) and $ f $ is univalent in $ D(x_n, M^n)  $ (by Lemma \ref{lemma-univalent}), it follows that $ f^{n+1}|_{D_{n}}$ is univalent. In particular, since $ D_{n+1}\subset D_n $, we have that $ f^n|_{ D_{n+1}} $ is univalent.
		
		Now we use Koebe's distortion estimates (Thm. \ref{thm-Koebe}) to prove the bound on the size of $  f^{n+1}( D_{n+1})$. Indeed, since $ f^{n+1}|_{D_{n}}$ is univalent, we have $ f^{n+1}(D_{n+1})\subset D(x_{n+1}, R) $, where \begin{align*}
			R=r\cdot \prod_{m\geq 1}^{n}(1-b_m))\cdot \left| (f^{n+1})'(x)\right| \cdot \dfrac{2}{b^3_n}\leq 2r\cdot\dfrac{\left| (f^{n+1})'(x)\right| }{\left| (f^{n+1})'(x)\right| ^{\frac34}}\leq\left| (f^{n+1})'(x)\right| ^{\frac14} \leq M^{n+1},
		\end{align*} as desired.
	\end{proof}
	\vspace{0.2cm}
	\noindent{\em 4. Contradiction with the blow-up property of the Julia set.}
	Let $ R>0 $ be small enough so that $ D(p, R) \subset U$, where $ p $ is the fixed point of $ f $ in $ U $. Let $ n_2\geq n_1 $ be such that $ M^{n_2}<\frac{R}{2} $ (recall that $ M\in (0,1) $, so such $ n_2 $ exists). 
	
	Then, $ f^{n_2}(D(x, rP)) $ is a neighbourhood of $x_{n_2}= f^{n_2}(x)\in\mathcal{J}(f) $. By the previous step, \[\bigcup_{n\geq n_2} f^n(D(x, rP))\subset \bigcup_{n\geq n_2} D(x_n, M^n) \subset \bigcup_{n\geq n_2} D(x_n, M^{n_2})\subset \widehat{\mathbb{C}}\smallsetminus D(p, R/2).\]This is a contradiction of the blow-up property of the Julia set. Notice that the contradiction comes from assuming $ \chi_{\omega_U}<0 $. Therefore, $ \chi_{\omega_U}\geq 0 $, and this ends the proof.
\end{proof}

\subsection{The Lyapunov exponent for parabolic basins and Baker domains}\label{subsect-parabolic-basins-lyapunov}
The boundary of parabolic basins and doubly parabolic Baker domains do not support invariant probabilities which are absolutely continuous with respect to the harmonic measure $ \omega_U $. However,  the measure 
\[\lambda_\mathbb{R} (A)=\int_A \frac{1}{\left| w-1\right| ^2}d\lambda(w), \hspace{0.5cm}A\in \mathcal{B}(\partial\mathbb{D}),\] is invariant under the radial extension of the associated inner function $ g $ (taken such that 1 is the Denjoy-Wolff point)
and its push-forward $\mu=(\varphi^*)_*\lambda_\mathbb{R}$ is an infinite invariant measure supported on $ \widehat{\partial}U $.

Hence, in the case of parabolic basins and doubly parabolic Baker domains, we shall consider the Lyapunov exponent of $ f $ with respect to $ \mu $ \[ \chi_{\mu}(f)\coloneqq \int_{\partial U}\log\left| f'\right| d\mu.\] We show that, for parabolic basins, if $ \log \left| f'\right|  $ is integrable with respect to harmonic measure, then it is also integrable with respect to the invariant measure $ \mu $. Note also that Lemma \ref{lemma-conjugate-mobius} and Proposition \ref{prop-f'-integrable} give conditions for $ \log \left| f'\right| \in L^1(\omega_U) $ and do not assume that $ \omega_U $ is invariant, so they  still hold in the parabolic setting.

\begin{prop}{\bf (Parabolic Lyapunov exponents)}\label{prop-log-int-parabolic-basins}
	Let $ f\in\mathbb{K} $, and let $ U $ be a parabolic basin. If $ \log \left| f'\right| \in L^1(\omega_U) $, then $ \log \left| f'\right| \in L^1(\mu) $. If, in addition, $ \chi_{\omega_U}> 0 $, then $ \chi_{\mu}>0 $.
\end{prop}
\begin{proof}
	For the first statement note that, since $ \omega_U $ and $ \mu $ are comparable except in a neighbourhood of the parabolic fixed point $ p\in\partial U $, it is enough to check that 
	\[\int_{D(p,r)}\log \left| f'\right| d\mu<\infty,\] for some $ r>0 $. We note that, in contrast with the situation considered in Proposition \ref{prop-f'-integrable}, $  \log \left| f'\right| $ achieves a (finite) maximum and minimum around $ p $  (since $ f'(p)=1 $), but now the difficulty comes from the fact that $ \mu $ is an infinite measure.

	On the one hand,  around the parabolic fixed point (which we assume to be the origin), we have the following normal form,    
	$f(z)=z+az^q+\dots$, with $ a\in\mathbb{C} $ and $ q\geq 2 $ (see e.g. \cite[Sect. 10]{milnor}). Therefore, 
	$\log \left| f'(z)\right| \sim \log (1+qa\left| z\right|^{q-1})$.

	On the other hand, by Lemma \ref{thm-beurling}, we have that 
	\[\lambda ((\varphi^*)^{-1}(D(p,r)))=\omega_U(D(p,r))\leq C\cdot\sqrt r.\] 
	Therefore, setting $ D_n=D(p, 1/n) $, \begin{align*}
		\int_{D(p,r)}\log \left| f'\right| d\mu&\lesssim \sum_{n=n_0}^\infty\log\left| 1+qa \frac{1}{n^{q-1}}\right|( \mu(D_n)-\mu(D_{n+1}))
		\lesssim \sum_{n=n_0}^\infty\left( \frac{1}{n ^{q-1}}-\frac{1}{{(n+1)}^{q-1}}\right)  \cdot \mu(D_n)\\&\lesssim  \sum_{n=n_0}^\infty\frac{n^{q-2}}{n^{2(q-1)}}\cdot \sqrt n<\infty,
	\end{align*}
	as desired.
	
	For the second statement, applying the Leau-Fatou Flower Theorem (see e.g. \cite[Sect. 10]{milnor}), we have that $ \log \left| f'\right| >0 $ in $ D(p,r)\cap \partial U $, for $ r $ small enough. Then the statement follows directly.
\end{proof}
\begin{remark} Proposition \ref{prop-log-int-parabolic-basins} is stated only for parabolic basins, and its proof used the normal form around a parabolic fixed point. For a Baker domain, there is no longer a normal form around the convergence point, since it is an essential singularity for $ f $, and hence the argument cannot be applied in general. However, for some explicit Baker domains, similar estimates can be obtained and the argument may work \textit{ad hoc}.
	Indeed, consider for instance the Baker domain of the map $ f(z)=z+e^{-z} $ (see \cite{FagellaJove}). Since it is contained in a strip and $ f $ has finite order, by Proposition \ref{prop-f'-integrable}, $ \log \left| f'\right| \in L^1(\omega_U) $. To see that $ \log \left| f'\right| \in L^1(\mu) $, it is enough to check integrability in a neighbourhood of infinity. Note that $ f'(z)=1-e^{-z} $, so the estimates on $ \left| f' \right| $ are even better than in the parabolic case, and the same argument can be applied.
	Moreover, $ \left| f'\right| >1 $ in a neighbourhood of $ \partial U $ \cite[Prop. 3.6]{FagellaJove}, so $ \chi_\mu (f)>0 $.
\end{remark}
\bibliographystyle{amsalpha}
{\footnotesize\bibliography{referencies.bib}}

\providecommand{\bysame}{\leavevmode\hbox to3em{\hrulefill}\thinspace}
\providecommand{\MR}{\relax\ifhmode\unskip\space\fi MR }
\providecommand{\MRhref}[2]{%
  \href{http://www.ams.org/mathscinet-getitem?mr=#1}{#2}
}
\providecommand{\href}[2]{#2}
\begin{thebibliography}{BFJK19}

\bibitem[Aar78]{Aaronson}
J.~Aaronson, \emph{Ergodic theory for inner functions of the upper half plane},
  Ann. Inst. H. Poincar\'{e} Sect. B (N.S.) \textbf{14} (1978), no.~3,
  233--253.

\bibitem[Aar97]{Aaronson97}
\bysame, \emph{An introduction to infinite ergodic theory}, Mathematical
  Surveys and Monographs, vol.~50, American Mathematical Society, Providence,
  RI, 1997.

\bibitem[BDH01]{BakerDominguezHerring}
I.~N. Baker, P.~Dom\'{\i}nguez, and M.~E. Herring, \emph{Dynamics of functions
  meromorphic outside a small set}, Ergodic Theory Dynam. Systems \textbf{21}
  (2001), no.~3, 647--672.

\bibitem[Ber95]{Bergweiler}
W.~Bergweiler, \emph{Invariant domains and singularities}, Math. Proc.
  Cambridge Philos. Soc. \textbf{117} (1995), no.~3, 525--532.

\bibitem[BFJK19]{bfjk-escaping}
K.~Bara\'{n}ski, N.~Fagella, X.~Jarque, and B.~Karpi\'{n}ska, \emph{Escaping
  points in the boundaries of {B}aker domains}, J. Anal. Math. \textbf{137}
  (2019), no.~2, 679--706.

\bibitem[Bog07]{Bogachev}
V.~I. Bogachev, \emph{Measure theory. {V}ol. {I}, {II}}, Springer-Verlag,
  Berlin, 2007.

\bibitem[Bol97]{Bolsch-thesis}
A.~Bolsch, \emph{Iteration of meromorphic functions with countably many
  essential singularities}, Ph.D. thesis, Technischen Universität Berlin,
  1997.

\bibitem[BP23]{BarreiraPesin}
L.~Barreira and Y.~Pesin, \emph{Introduction to smooth ergodic theory}, second
  ed., Graduate Studies in Mathematics, vol. 231, American Mathematical
  Society, Providence, RI, 2023.

\bibitem[Bro65]{Brolin65}
H.~Brolin, \emph{Invariant sets under iteration of rational functions}, Ark.
  Mat. \textbf{6} (1965), 103--144.

\bibitem[CG93]{CarlesonGamelin}
L.~Carleson and T.~W. Gamelin, \emph{Complex dynamics}, Universitext: Tracts in
  Mathematics, Springer-Verlag, New York, 1993.

\bibitem[Con95]{Conway}
J.~B. Conway, \emph{Functions of one complex variable. {II}}, Graduate Texts in
  Mathematics, vol. 159, Springer-Verlag, New York, 1995.

\bibitem[Cra91]{Craizer}
M.~Craizer, \emph{Entropy of inner functions}, Israel J. Math. \textbf{74}
  (1991), no.~2-3, 129--168.

\bibitem[Cra92]{Craizer2}
\bysame, \emph{The {B}ernoulli property of inner functions}, Ergodic Theory
  Dynam. Systems \textbf{12} (1992), no.~2, 209--215.

\bibitem[DM91]{DoeringMane1991}
C.~Doering and R.~Mañé, \emph{The {D}ynamics of {I}nner {F}unctions}, Ensaios
  Matemáticos (SBM) \textbf{3} (1991), 1--79.

\bibitem[Dob12]{DobbsP}
N.~Dobbs, \emph{Measures with positive {L}yapunov exponent and conformal
  measures in rational dynamics}, Trans. Amer. Math. Soc. \textbf{364} (2012),
  no.~6.

\bibitem[DS08]{Dobbs}
N.~Dobbs and B.~Skorulski, \emph{Non-existence of absolutely continuous
  invariant probabilities for exponential maps}, Fund. Math. \textbf{198}
  (2008), no.~3, 283--287.

\bibitem[FH06]{fh}
N.  Fagella  and C.~Henriksen, \emph{Deformation of {E}ntire {F}unctions with
  {B}aker {D}omains}, Discrete and Continuous Dynamical Systems \textbf{15}
  (2006), 379--394.

\bibitem[FJ23]{FagellaJove}
N.~Fagella and A.~Jov\'{e}, \emph{A model for boundary dynamics of {B}aker
  domains}, Math. Z. \textbf{303} (2023), no.~4, Paper No. 95, 36.

\bibitem[FJ25]{FerreiraJove}
G.~R. Ferreira and A.~Jové, \emph{Boundaries of multiply connected {F}atou
  components}, 2025, In preparation.

\bibitem[FLM83]{Freire-Lopes-Mane}
A.~Freire, A.~Lopes, and R.~Mañé, \emph{An invariant measure for rational
  maps}, Bol. Soc. Brasil. Mat. \textbf{14} (1983), no.~1, 45--62.

\bibitem[GM05]{harmonicmeasure2}
J.~B. Garnett and D.~E. Marshall, \emph{Harmonic measure}, New Mathematical
  Monographs, vol.~2, Cambridge University Press, Cambridge, 2005.

\bibitem[Haw21]{Hawkins}
J.~Hawkins, \emph{Ergodic dynamics -- from basic theory to applications},
  Graduate Texts in Mathematics, vol. 289, Springer, 2021.

\bibitem[IU23]{ivrii2023inner}
O.~Ivrii and M.~Urbański, \emph{{I}nner {F}unctions, {C}omposition
  {O}perators, {S}ymbolic {D}ynamics and {T}hermodynamic {F}ormalism}, 2023.

\bibitem[IU24]{ivriiurbanskiNOU}
\bysame, \emph{Inner {F}unctions and {L}aminations}, 2024, In preparation.

\bibitem[Jov24]{Jove}
A.~Jové, \emph{Periodic boundary points for simply connected {F}atou
  components of transcendental maps}, 2024, available at
  \href{https://arxiv.org/abs/2404.11094}{\texttt{arXiv:2404.11094}}.

\bibitem[KH95]{KatokHasselblat}
A.~Katok and B.~Hasselblatt, \emph{Introduction to the modern theory of
  dynamical systems}, Encyclopedia of Mathematics and its Applications,
  vol.~54, Cambridge University Press, Cambridge, 1995.

\bibitem[KU23]{KotusUrbanski}
J.~Kotus and M.~Urba\'{n}ski, \emph{Meromorphic dynamics. {V}ol. {II}.
  {E}lliptic functions with an introduction to the dynamics of meromorphic
  functions}, New Mathematical Monographs, vol.~47, Cambridge University Press,
  Cambridge, 2023.

\bibitem[Led81]{Ledrappier}
F.~Ledrappier, \emph{Some properties of absolutely continuous invariant
  measures on an interval}, Ergodic Theory Dynam. Systems \textbf{1} (1981),
  no.~1, 77--93.

\bibitem[Lyu83]{Ljubich_1983}
M.~Ju. Lyubich, \emph{Entropy properties of rational endomorphisms of the
  riemann sphere}, Ergodic Theory and Dynamical Systems \textbf{3} (1983),
  no.~3, 351–385.

\bibitem[Mil06]{milnor}
J.~Milnor, \emph{Dynamics in one complex variable}, third ed., Annals of
  Mathematics Studies, vol. 160, Princeton University Press, Princeton, NJ,
  2006.

\bibitem[Par67]{Parthasarathy}
K.~R. Parthasarathy, \emph{Probability measures on metric spaces}, Probability
  and Mathematical Statistics, vol. No. 3, Academic Press, Inc., New
  York-London, 1967.

\bibitem[Pes76]{Pesin76}
Ja.~B. Pesin, \emph{Families of invariant manifolds that correspond to nonzero
  characteristic exponents}, Izv. Akad. Nauk SSSR Ser. Mat. \textbf{40} (1976),
  no.~6, 1332--1379, 1440.

\bibitem[Pes77]{Pesin77}
\bysame, \emph{Characteristic {L}japunov exponents, and smooth ergodic theory},
  Uspehi Mat. Nauk \textbf{32} (1977), no.~4(196), 55--112, 287.

\bibitem[Pol93]{Pollicott}
M.~Pollicott, \emph{Lectures on ergodic theory and {P}esin theory on compact
  manifolds}, London Mathematical Society Lecture Note Series, vol. 180,
  Cambridge University Press, Cambridge, 1993.

\bibitem[Pom92]{Pommerenke}
Ch. Pommerenke, \emph{Boundary behaviour of conformal maps}, Grundlehren der
  mathematischen Wissenschaften, vol. 299, Springer-Verlag, Berlin, 1992.

\bibitem[Prz85]{Przytycki-Hausdorff_dimension}
F.~Przytycki, \emph{Hausdorff dimension of harmonic measure on the boundary of
  an attractive basin for a holomorphic map}, Invent. Math. \textbf{80} (1985),
  no.~1, 161--179.

\bibitem[Prz93]{Przytycki-Lyapunovexponents}
\bysame, \emph{Lyapunov characteristic exponents are nonnegative}, Proc. Amer.
  Math. Soc. \textbf{119} (1993), no.~1, 309--317.

\bibitem[PU10]{PrzytyckiUrbanski}
F.~Przytycki and M.~Urba\'{n}ski, \emph{Conformal fractals: ergodic theory
  methods}, London Mathematical Society Lecture Note Series, vol. 371,
  Cambridge University Press, Cambridge, 2010.

\bibitem[PZ94]{Przytycki-Zdunik}
F.~Przytycki and A.~Zdunik, \emph{Density of periodic sources in the boundary
  of a basin of attraction for iteration of holomorphic maps: geometric coding
  trees technique}, Fund. Math. \textbf{145} (1994), no.~1, 65--77.

\bibitem[RGS17]{Rempe-Sixsmith}
L.~Rempe-Gillen and D.~Sixsmith, \emph{Hyperbolic entire functions and the
  {E}remenko-{L}yubich class: class {B} or not class {B}?}, Math. Z.
  \textbf{286} (2017), no.~3-4, 783--800.

\bibitem[Roh64]{Rohlin}
V.~A. Rohlin, \emph{Exact endomorphism of a {L}ebesgue space}, Magyar Tud.
  Akad. Mat. Fiz. Oszt. K\"ozl. \textbf{14} (1964), 443--474. \MR{228654}

\bibitem[Tha01]{notesInfErgodicTheory}
M.~Thaler, \emph{Infinite ergodic theory}, 2001, Marseille-Luminy CIRM, The
  dynamic odyssey.

\bibitem[URM22]{UrbanskiRoyMunday1}
M.~Urba\'{n}ski, M.~Roy, and S.~Munday, \emph{Non-invertible dynamical systems.
  {V}ol. 1. {E}rgodic theory---finite and infinite, thermodynamic formalism,
  symbolic dynamics and distance expanding maps}, De Gruyter Expositions in
  Mathematics, vol. 69.1, De Gruyter, Berlin, 2022.

\bibitem[URM23]{UrbanskiRoyMunday3}
M.~Urbanski, M.~Roy, and S.~Munday, \emph{Non-invertible dynamical systems.
  {V}ol. 3. {A}nalytic {E}ndomorphisms of the {R}iemann {S}phere}, De Gruyter,
  Berlin, Boston, 2023.

\bibitem[UZ03]{UrbanskiZdunik}
M.~Urba\'{n}ski and A.~Zdunik, \emph{The finer geometry and dynamics of the
  hyperbolic exponential family}, Michigan Math. J. \textbf{51} (2003), no.~2,
  227--250.

\end{thebibliography}

\end{document}